\theoremstyle{plain}
\newtheorem{thm}{Theorem}[section]
\newtheorem{lemma}[thm]{Lemma}
\newtheorem{prop}[thm]{Proposition}
\newtheorem{cor}[thm]{Corollary}
\theoremstyle{definition}
\newtheorem{remark}[thm]{Remark}
\theoremstyle{example}
\theoremstyle{remark}
\numberwithin{equation}{section}
\providecommand{\keywords}[1]{\textbf{\textit{Key words---}} #1}
\def\cB{\mathcal{B}}
\def\CC{\mathbb{C}}
\def\ZZ{\mathbb{Z}}
\def\fa{\mathfrak{a}}
\def\ev{\mathrm{ev}}
\def\id{\mathrm{id}}
\def\wt{\mathrm{wt}}
\newcommand{\TikZ}[1]{
\begin{matrix}\begin{tikzpicture}#1\end{tikzpicture}\end{matrix}
}
\tikzstyle{V}=[draw, fill =black, circle, inner sep=0pt, minimum size=1.5pt]
\tikzstyle{wV}=[draw, fill =white, circle, inner sep=0pt, minimum size=4.5pt]
\tikzstyle{bV}=[draw, fill =black, circle, inner sep=0pt, minimum size=4.5pt]
\tikzstyle{over}=[draw=white,double=black,line width=2pt, double distance=.5pt]
\def\Over[#1,#2][#3,#4]{ 
	\draw[style=over]   (#2,#1) .. controls ++(#4*.5-#2*.5,0) and ++(-#4*.5+#2*.5,0) .. (#4,#3);}
\def\Under[#1,#2][#3,#4]{ 
	\draw  (#2,#1) .. controls ++(#4*.5-#2*.5,0) and ++(-#4*.5+#2*.5,0) .. (#4,#3);}
\def\Cross[#1,#2][#3,#4]{
	\Under[#3,#2][#1,#4]\Over[#1,#2][#3,#4]}
\def\Tops[#1][#2][#3]{
	\foreach\x in {#1}{
		\draw (#2,\x+.15) -- (#2+.1, \x+.15) (#2, \x-.15) -- (#2+.1, \x-.15) ;
		\draw (#2+.1,\x) arc (0:360:.75mm and 1.5mm);}
	\foreach \x in {1,...,#3} {\draw (#2,\x)  to (#2+.05,\x); \node[V] at (#2+.05,\x){};}
	}
\def\Bottoms[#1][#2][#3]{
	\foreach\x in {#1}{
		\draw (#2, \x+.15) -- (#2-.1, \x+.15) (#2, \x-.15) -- (#2-.1, \x-.15) ;
		\draw (#2-.1, \x+.15) arc (90:270:.75mm and 1.5mm);}
	\foreach \x in {1,...,#3} {\draw (#2, \x)  to (#2-.05, \x); \node[V] at (#2-.05, \x){};}
	}
\def\Caps[#1][#2,#3][#4]{
	\Tops[#1][#3][#4]
	\Bottoms[#1][#2][#4]
	}
\def\Pole[#1][#2,#3]{
	\shade[left color=white,right color=white] (#2,#1+.15) rectangle (#3,#1-.15);
	\draw[over] (#2,#1+.15) to (#3,#1+.15) (#2,#1-.15) to (#3,#1-.15) ;}
\def\Label[#1,#2][#3][#4]{
	\node[right] at (#2+.1,#3) {#4};
	\node[left] at (#1-.1,#3) {#4};		}
\def\Nodes[#1][#2]{
	 \foreach \x in {1,...,#2} {\node[V] at (#1,\x){};	}
	}
\def\PoleCaps[#1][#2,#3]{
	\foreach\x in {#1}{
		\draw (#2,\x+.15) -- (#2-.1,\x+.15) (#2,\x-.15) -- (#2-.1,\x-.15) ;
		\draw (#2-.1,\x+.15) arc (0:-180:1.5mm and .75mm);}
	\foreach\x in {#1}{
		\draw (#3,\x+.15) -- (#3+.1,\x+.15) (#3,\x-.15) -- (#3+.1,\x-.15) ;
		\draw (#3+.1,\x+.15) arc (0:360:1.5mm and .75mm);}
	}
\def\PoleTwist[#1,#2]{
	\foreach \x/\y in {-1/1L, -.7/1R, 0/2L, .3/2R}{\coordinate(T\y) at (#2,\x); \coordinate(B\y) at (#1,\x);}
	\draw[thin] (B1R) .. controls ++(#2*.5-#1*.5-.1,0) and ++(-#2*.5+#1*.5-.1,0) ..  (T2R)
			(B1L)   .. controls ++(#2*.5-#1*.5+.1,0) and ++(-#2*.5+#1*.5+.1,0) ..    (T2L) ;
	\draw[line width=2pt, white]
			(#1,.15)  .. controls +(#2*.5-#1*.5,0) and +(-#2*.5+#1*.5,0) ..   (#2,-.85) ;
	\draw[thin,over] 
		(B2R) .. controls ++(#2*.5-#1*.5+.1,0) and ++(-#2*.5+#1*.5+.1,0) ..  (T1R) 
			(B2L)  .. controls +(#2*.5-#1*.5-.1,0) and +(-#2*.5+#1*.5-.1,0) ..   (T1L) ;
			}
\def\SymPolesCaps[#1,#2][#3]{
	\draw (#1,.3) -- (#1-.1,.3) (#1,.15) -- (#1-.1, .15) ;
	\draw (#1-.1, .3) arc (0:-180:2pt and 1.5pt);
	\draw (#1,#3+.7) -- (#1-.1,#3+.7) (#1,#3+.85) -- (#1-.1,#3+.85) ;
	\draw (#1-.1,#3+.85)  arc (0:-180:2pt and 1.5pt);
	\draw (#2,.3) -- (#2+.1, .3) (#2, .15) -- (#2+.1, .15) ;
	\draw (#2+.1, .3) arc (0:360:2pt and 1.5pt);
	\draw (#2, #3+.7) -- (#2+.1, #3+.7) (#2, #3+.85) -- (#2+.1, #3+.85) ;
	\draw (#2+.1, #3+.85) arc (0:360:2pt and 1.5pt);}
\newcommand{\posleq}[1]{
	\hspace{0.1cm}
	\begin{tikzpicture}
	\draw (-0.8ex, -0.5ex) -- (0.8ex, -0.5ex);
	\draw (-0.8ex, 0.4ex) -- (0.7ex, -0.2ex);
	\draw (-0.8ex, 0.4ex) -- (0.7ex, 1ex);
	\draw (0.4ex,0.4ex) --(1.1ex, 0.4ex);
	\draw (0.75ex,0.75ex) --(0.75ex, 0.05ex);
	\end{tikzpicture}
	\hspace{0.1cm}
	}
\newcommand{\negleq}[1]{
	\hspace{0.1cm}
	\begin{tikzpicture}
	\draw (-0.8ex, -0.5ex) -- (0.8ex, -0.5ex);
	\draw (-0.8ex, 0.4ex) -- (0.7ex, -0.2ex);
	\draw (-0.8ex, 0.4ex) -- (0.7ex, 1ex);
	\draw (0.4ex,0.4ex) --(1.1ex, 0.4ex);
	\end{tikzpicture}
	\hspace{0.1cm}
	}
\newcommand{\zeroleq}[1]{
	\hspace{0.1cm}
	\begin{tikzpicture}
	\draw (-0.8ex, -0.5ex) -- (0.8ex, -0.5ex);
	\draw (-0.8ex, 0.4ex) -- (0.7ex, -0.2ex);
	\draw (-0.8ex, 0.4ex) -- (0.7ex, 1ex);
	\draw  (0.75ex,0.4ex) ellipse (0.2ex and 0.35ex);
	\end{tikzpicture}
	\hspace{0.1cm}
	}
\newcommand{\posgeq}[1]{
	\hspace{0.1cm}
	\begin{tikzpicture}
	\draw (-0.8ex, -0.5ex) -- (0.8ex, -0.5ex);
	\draw (0.8ex, 0.4ex) -- (-0.7ex, -0.2ex);
	\draw (0.8ex, 0.4ex) -- (-0.7ex, 1ex);
	\draw (-0.4ex,0.4ex) --(-1.1ex, 0.4ex);
	\draw (-0.75ex,0.75ex) --(-0.75ex, 0.05ex);
	\end{tikzpicture}
	\hspace{0.1cm}
	}
\newcommand{\neggeq}[1]{
	\hspace{0.1cm}
	\begin{tikzpicture}
	\draw (-0.8ex, -0.5ex) -- (0.8ex, -0.5ex);
	\draw (0.8ex, 0.4ex) -- (-0.7ex, -0.2ex);
	\draw (0.8ex, 0.4ex) -- (-0.7ex, 1ex);
	\draw (-0.4ex,0.4ex) --(-1.1ex, 0.4ex);
	\end{tikzpicture}
	\hspace{0.1cm}
	}
\newcommand{\zerogeq}[1]{
	\hspace{0.1cm}
	\begin{tikzpicture}
	\draw (-0.8ex, -0.5ex) -- (0.8ex, -0.5ex);
	\draw (0.8ex, 0.4ex) -- (-0.7ex, -0.2ex);
	\draw (0.8ex, 0.4ex) -- (-0.7ex, 1ex);
	\draw  (-0.75ex,0.4ex) ellipse (0.2ex and 0.35ex);
	\end{tikzpicture}
	\hspace{0.1cm}
	}
\newcommand{\posl}[1]{
	\hspace{0.1cm}
	\begin{tikzpicture}
	\draw (-0.8ex, 0.4ex) -- (0.7ex, -0.2ex);
	\draw (-0.8ex, 0.4ex) -- (0.7ex, 1ex);
	\draw (0.4ex,0.4ex) --(1.1ex, 0.4ex);
	\draw (0.75ex,0.75ex) --(0.75ex, 0.05ex);
	\end{tikzpicture}
	\hspace{0.1cm}
	}
\newcommand{\negl}[1]{
	\hspace{0.1cm}
	\begin{tikzpicture}
	\draw (-0.8ex, 0.4ex) -- (0.7ex, -0.2ex);
	\draw (-0.8ex, 0.4ex) -- (0.7ex, 1ex);
	\draw (0.4ex,0.4ex) --(1.1ex, 0.4ex);
	\end{tikzpicture}
	\hspace{0.1cm}
	}
\newcommand{\zerol}[1]{
	\hspace{0.1cm}
	\begin{tikzpicture}
	\draw (-0.8ex, 0.4ex) -- (0.7ex, -0.2ex);
	\draw (-0.8ex, 0.4ex) -- (0.7ex, 1ex);
	\draw  (0.75ex,0.4ex) ellipse (0.2ex and 0.35ex);
	\end{tikzpicture}
	\hspace{0.1cm}
	}
\newcommand{\posg}[1]{
	\hspace{0.1cm}
	\begin{tikzpicture}
	\draw (0.8ex, 0.4ex) -- (-0.7ex, 1ex);
	\draw (0.8ex, 0.4ex) -- (-0.7ex, -0.2ex);
	\draw (-0.4ex,0.4ex) --(-1.1ex, 0.4ex);
	\draw (-0.75ex,0.75ex) --(-0.75ex, 0.05ex);
	\end{tikzpicture}
	\hspace{0.1cm}
	}
\newcommand{\negg}[1]{
	\hspace{0.1cm}
	\begin{tikzpicture}
	\draw (0.8ex, 0.4ex) -- (-0.7ex, -0.2ex);
	\draw (0.8ex, 0.4ex) -- (-0.7ex, 1ex);
	\draw (-0.4ex,0.4ex) --(-1.1ex, 0.4ex);
	\end{tikzpicture}
	\hspace{0.1cm}
	}
\newcommand{\zerog}[1]{
	\hspace{0.1cm}
	\begin{tikzpicture}
	\draw (0.8ex, 0.4ex) -- (-0.7ex, -0.2ex);
	\draw (0.8ex, 0.4ex) -- (-0.7ex, 1ex);
	\draw  (-0.75ex,0.4ex) ellipse (0.2ex and 0.35ex);
	\end{tikzpicture}
	\hspace{0.1cm}
	}
\renewcommand{\@makefnmark}{\mbox{\textsuperscript{}}}
\title{Comparing formulas for type $GL_n$ Macdonald polynomials}
\author{
Weiying Guo\quad\ \ email:\ guwg@student.unimelb.edu.au \\
Arun Ram\quad\ \ email:\ aram@unimelb.edu.au \\
\\
}
\date{}
\begin{document}

\maketitle

\vspace{-3em}
\begin{center}
{\sl Dedicated to H\'el\`ene Barcelo}
\end{center}


\begin{abstract}
\noindent
The paper compares (and reproves) the alcove walk and the nonattacking fillings formulas
for type $GL_n$ Macdonald polynomials which were given in  \cite{HHL06},
\cite{Al16} and \cite{RY08}.  The ``compression'' relating the two formulas 
in this paper is the same as that of Lenart \cite{Len08}. 
We have reformulated it so that it holds without conditions and so that the proofs of the
alcove walk formula and the nonattacking fillings formula are parallel.  This reformulation 
highlights the role of the double affine Hecke algebra and Cherednik's intertwiners.
An exposition of the type $GL_n$ double affine braid
group, double affine Hecke algebra, and all definitions and proofs regarding Macdonald
polynomials are provided to make this paper self contained.
\end{abstract}

\keywords{Macdonald polynomials, affine Hecke algebras, tableaux}
\footnote{AMS Subject Classifications: Primary 05E05; Secondary  33D52.}

\setcounter{section}{-1}
\tableofcontents

\section{Introduction}


The Macdonald polynomials are an incredible family of orthogonal
polynomials which simultaneously 
generalize Schur functions, Weyl characters, Demazure characters,
Askey-Wilson polynomials, Koornwinder polynomials, Hall-Littlewood polynomials,
Jack polynomials
and spherical functions on $p$-adic groups.  They are eigenfunctions of a 
family of difference operators which generalize the classical Laplacian and, in this
sense, the Macdonald polynomials $E_\mu$ are generalizations of spherical harmonics.

This paper is a study of the relationship between combinatorial formulas
for $GL_n$-type Macdonald polynomials:
\begin{itemize}[itemsep=0em]
\item[(a)] The nonattacking fillings formulas from 
\cite[Theorem 3.5.1]{HHL06} and \cite{Al16}, and 
\item[(b)] The alcove walk formula from  \cite[Theorem 3.1]{RY08}.
\end{itemize}
Except for Section 4, which contains the recursions and the calculations for the proofs, 
we have made an effort to try to make the different sections of this paper readable 
independent of each other.
The reader should not hesitate to go directly to Section 5 for an introduction to the
double affine Hecke algebra, to Section 2 for an entr\'ee to $n$-periodic permutations
and the affine Weyl group, and to Section 3 for the basics of Macdonald polynomials and
some explicit examples of them.

The first half of Section 3 defines the various kinds of Macdonald polynomials, the $E_\mu$, the $P_\lambda$ and the $E^z_\mu$; the second half of Section 3 computes some examples.  
In \cite{Al16},
the \emph{relative Macdonald polynomials} $E^z_\mu = (const)T_z E_\mu$ of this paper 
are called ``permuted basement Macdonald polynomials''.
These ``$T_z$ shifted Macdonald polynomials'' are useful for all root systems and have an 
alcove walk formula \cite[Theorem 2.2]{RY08}. In the general root system setting, 
the notion of a ``basement'' has a different flavor (the $z$ in the $T_z$ is the ``basement'') 
and so we propose the term \emph{relative Macdonald polynomials} for the $E^z_\mu$.

As explained in Macdonald's book \cite{Mac03},
the $n$-periodic permutation $u_\mu$ defined in Section 2
is a critical ingredient for the understanding of the combinatorics
of Macdonald polynomials and their construction by intertwiners $\tau^\vee_i$.
Proposition \ref{KSredwd}
provides a favorite reduced word for $u_\mu$ and determines its inversions.
The inversions of $u_\mu$ provide the ``arms'' and ``legs'' that appear in \cite{HHL06}
(denoted $\mathrm{Narm}_\mu$ and $\mathrm{Nleg}_\mu$ in this paper),
and this observation connects those statistics 
with the roots of the affine root system for type $GL_n$.  Proposition \ref{boxbybox} derives
a box-by-box recursion for computing Macdonald polynomials and Remark \ref{covidremark} 
shows that the statistic that falls out of this derivation (in terms of a comparison of lengths of
permutations) counts  the coinversion triples that are used in \cite{HHL06}.
This observation completes the interpretation of the statistics in the nonattacking fillings formula
in terms of the Weyl group and the root system.  Let us highlight that using the box-by-box recursion
to compute Macdonald polynomials is equivalent to using a special reduced word for the 
$n$-periodic permutation $u_\mu$, the \emph{box greedy reduced word for $u_\mu$}.

The proof of the alcove walk formula is obtained by iterating the \emph{step-by-step recursion}
for the relative Macdonald polynomials $E^z_\mu$.
The proof of the nonattacking fillings formula is obtained by iterating the \emph{box-by-box recursion}
for the relative Macdonald polynomials $E^z_\mu$.  Except for the effort to normalize
the $E^z_\mu$ so that $x^{z\mu}$ has coefficient 1, the proof of the step-by-step recursion
does not differ from the proof of \cite[Theorem 2.2]{RY08}.  The proof of the box-by-box
recursion is, at its core, the same as \cite[Proposition 4.1]{Len08}.  Our reformulation
and proof highlights the role of the intertwiners and the connection to the affine root system
and pinpoints exactly which intertwiners get ``compressed''.

Section 5 provides a Type $GL_n$ specific exposition, 
from scratch, of the double affine Hecke algebra and
its use for defining and studying Macdonald polynomials.  \textbf{In \cite{GR21}, 
a supplement to this paper,
we provide examples and further observations.}

A small warning: Even though they all have a Type A root system,
type $SL_n$ Macdonald polynomials, type $PGL_n$ Macdonald polynomals
and   type $GL_n$ Macdonald
polynomials are all \emph{different} 
(though the relationship is well known and not difficult).  We should
stress that this paper is specific to the $GL_n$-case and some results of this paper
do not hold for Type $SL_n$ or type $PGL_n$ unless properly modified.

We thank L.\ Williams and M.\ Wheeler for bringing our attention to \cite{CMW18}
and \cite{BW19}, both of which were important stimuli during our work.  We are also very grateful 
for the encouragement, questions, and discussions from A.\ Hicks, S.\ Mason, O.\ Mandelshtam,
Z.\ Daugherty,  Y.\ Naqvi, S.\ Assaf, and especially A.\ Garsia and S.\ Corteel,
which helped so much in getting going and keeping up the energy.  We thank
S.\ Billey, Z.\ Daugherty, C.\ Lenart and J.\ Saied 
for very useful specific comments for improving 
the exposition.  A.\ Ram extends a very special and heartfelt
thank you to P.\ Diaconis who has provided 
unfailing support and advice and honesty and encouragement.


\section{Boxes, alcove walks and nonattacking fillings}\label{combsection}

The goal of this section is to state the main results: the alcove walks formula and 
the nonattacking fillings formula, and the compression map $\psi$ which relates them.  We begin
by setting up the combinatorics of boxes, diagrams, alcove walks and nonattacking fillings.
Then, after specifying the weights attached to alcove walks and to nonattacking fillings
we state the alcove walks formula and the nonattacking fillings formula for Macdonald
polynomials as weighted sums of alcove walks and nonattacking fillings, respectively.

\subsection{Boxes}

Fix $n\in \ZZ_{>0}$.
A \emph{box} is an element of $\{1,\ldots, n\}\times \ZZ_{\ge 0}$ so that
$$\{\hbox{boxes}\} = \{ (i,j)\ |\ i\in \{1, \ldots, n\},\ j\in \ZZ_{\ge 0}\}.$$
To conform to \cite[p.2]{Mac}, we draw the box $(i,j)$ 
as a square in row $i$ and column $j$ using the same coordinates
as are usually used for matrices.
\begin{equation}
\hbox{The \emph{cylindrical coordinate} of the box $(i,j)$ is the number $i+nj$.}
\label{cylcoorddefn}
\end{equation}
The \emph{basement} is the set $\{ (i,0)\ |\ i\in \{1,\ldots, n\}\}$, so that the basement is
the collection of boxes in the 0th column.
Pictorially, 
$$
\begin{array}{c|cccccc}
\boxed{{}_1\ (1,0)} 
&\boxed{{}_6\ (1,1)} 
&\boxed{{}_{11} (1,2)} 
&\boxed{{}_{16}\ (1,3)} 
&\boxed{{}_{23}\ (1,4)} 
&\cdots 
\\
\boxed{{}_{2}\ (2,0)} 
&\boxed{{}_{7}\ (2,1)} 
&\boxed{{}_{12}\ (2,2)} 
&\boxed{{}_{17}\ (2,3)} 
&\boxed{{}_{22}\ (2,4)} 
&\cdots 
\\
\boxed{{}_{3}\ (3,0)} 
&\boxed{{}_{8}\ (3,1)} 
&\boxed{{}_{13}\ (3,2)} 
&\boxed{{}_{18}\ (3,3)} 
&\boxed{{}_{23}\ (3,4)} 
&\cdots 
\\
\boxed{{}_{4}\ (4,0)} 
&\boxed{{}_{9}\ (4,1)} 
&\boxed{{}_{14}\ (4,2)} 
&\boxed{{}_{19}\ (4,3)} 
&\boxed{{}_{24}\ (4,4)} 
&\cdots 
\\
\boxed{{}_{5}\ (5,0)} 
&\boxed{{}_{10}\ (5,1)} 
&\boxed{{}_{15}\ (5,2)} 
&\boxed{{}_{20}\ (5,3)} 
&\boxed{{}_{25}\ (5,4)} 
&\cdots 
\end{array}
\qquad\hbox{with box $(i,j)$ numbered $\scriptstyle{i+nj}$.}
$$

Let $\mu=(\mu_1, \ldots, \mu_n)\in \ZZ_{\ge 0}^n$ an $n$-tuple of nonnegative integers.  
The \emph{diagram of $\mu$} is 
the set $dg(\mu)$ of boxes with $\mu_i$ boxes in row $i$
and the \emph{diagram of $\mu$ with basement} $\widehat{dg}(\mu)$
includes the extra boxes $(i,0)$ for 
$i\in \{1, \ldots,n\}$:
\begin{align*}
dg(\mu) &= \{ (i,j)\ |\ \hbox{$i\in \{1, \ldots, n\}$ and $j\in \{1, \ldots, \mu_i\}$} \} \quad\hbox{and}  \\
\widehat{dg}(\mu) &= \{ (i,j)\ |\ \hbox{$i\in \{1, \ldots, n\}$ and $j\in \{0, 1, \ldots, \mu_i\}$} \} 
\end{align*}
It is often convenient to abuse notation and identify $\mu$, $dg(\mu)$ and $\widehat{dg}(\mu)$
(because these are just different ways of viewing the sequence $(\mu_1, \ldots, \mu_n)$).
For example, if $\mu = (0,4,1,5,4)$ then
$$
dg(\mu) = \begin{array}{|cccccc}
\\
\boxed{\phantom{T}} 
&\boxed{\phantom{T}} 
&\boxed{\phantom{T}} 
&\boxed{\phantom{T}} 
\\
\boxed{\phantom{T}} 
\\
\boxed{\phantom{T}} 
&\boxed{\phantom{T}} 
&\boxed{\phantom{T}} 
&\boxed{\phantom{T}} 
&\boxed{\phantom{T}} 
\\
\boxed{\phantom{T}} 
&\boxed{\phantom{T}} 
&\boxed{\phantom{T}} 
&\boxed{\phantom{T}} 
\end{array}
\quad\hbox{and}\quad
\widehat{dg}(\mu) = \begin{array}{c|cccccc}
\boxed{\phantom{T}} 
\\
\boxed{\phantom{T}} 
&\boxed{\phantom{T}} 
&\boxed{\phantom{T}} 
&\boxed{\phantom{T}} 
&\boxed{\phantom{T}} 
\\
\boxed{\phantom{T}} 
&\boxed{\phantom{T}} 
\\
\boxed{\phantom{T}} 
&\boxed{\phantom{T}} 
&\boxed{\phantom{T}} 
&\boxed{\phantom{T}} 
&\boxed{\phantom{T}} 
&\boxed{\phantom{T}} 
\\
\boxed{\phantom{T}} 
&\boxed{\phantom{T}} 
&\boxed{\phantom{T}} 
&\boxed{\phantom{T}} 
&\boxed{\phantom{T}} 
\end{array}
$$


\subsection{Alcove walks and nonattacking fillings}

Let $\mu\in \ZZ_{\ge 0}^n$.
Using cylindrical coordinates for boxes as specified \eqref{cylcoorddefn}, define, 
for a box $b\in dg(\mu)$,
\begin{align}
\mathrm{attack}_\mu(b) &= \{b-1, \ldots, b-n+1\}\cap \widehat{dg}(\mu), 
\label{attackdefn}
\\
\mathrm{Nleg}_\mu(b) &= (b+n\ZZ_{>0})\cap dg(\mu)
\quad \hbox{and} \\
\mathrm{Narm}_\mu(b) &= \{ a\in \mathrm{attack}_\mu(b)
 \ |\ \#\mathrm{Nleg}_\mu(a)\le \#\mathrm{Nleg}_\mu(b)\}.
\label{Harmdef}
\end{align}
where $\#\mathrm{Nleg}_\mu(a)$ denotes the number of elements of $\mathrm{Nleg}_\mu(a)$.
For example, with $\mu = (3,0,5,1,4,3,4)$ and $b=(5,2)$, which has cylindrical coordinate
$b=5+7\cdot 2 = 19$ the sets $\mathrm{attack}_\mu(b)$, $\mathrm{Narm}_\mu(b)$
and $\mathrm{Nleg}_\mu(b)$ are pictured as
$$
\mathrm{attack}_\mu(b) =
\begin{array}{c|ccccccc}
\boxed{\phantom{T} }
&\boxed{\phantom{T} }
&\boxed{X}
&\boxed{\phantom{T} }
\\
\boxed{\phantom{T} }
\\
\boxed{\phantom{T} }
&\boxed{\phantom{T}} 
&\boxed{X}
&\boxed{\phantom{T} }
&\boxed{\phantom{T} }
&\boxed{\phantom{T} }
\\
\boxed{\phantom{T} }
&\boxed{\phantom{T} }
\\
\boxed{\phantom{T} }
&\boxed{\phantom{T} }
&\boxed{b}
&\boxed{\phantom{T} }
&\boxed{\phantom{T} }
\\
\boxed{\phantom{T} }
&\boxed{X}
&\boxed{\phantom{T} }
&\boxed{\phantom{T} }
\\
\boxed{\phantom{T}}
&\boxed{X}
&\boxed{\phantom{T} }
&\boxed{\phantom{T} }
&\boxed{\phantom{T} }
\end{array}
\qquad 
\mathrm{Nleg}_\mu(b) = \begin{array}{c|ccccccc}
\boxed{\phantom{T} }
&\boxed{\phantom{T} }
&\boxed{\phantom{T} }
&\boxed{\phantom{T} }
\\
\boxed{\phantom{T} }
\\
\boxed{\phantom{T} }
&\boxed{\phantom{T}} 
&\boxed{\phantom{T} }
&\boxed{\phantom{T} }
&\boxed{\phantom{T} }
&\boxed{\phantom{T} }
\\
\boxed{\phantom{T} }
&\boxed{\phantom{T} }
\\
\boxed{\phantom{T} }
&\boxed{\phantom{T} }
&\boxed{b}
&\boxed{\ell }
&\boxed{\ell }
\\
\boxed{\phantom{T} }
&\boxed{\phantom{T} }
&\boxed{\phantom{T} }
&\boxed{\phantom{T} }
\\
\boxed{\phantom{T} }
&\boxed{\phantom{T} }
&\boxed{\phantom{T} }
&\boxed{\phantom{T} }
&\boxed{\phantom{T} }
\end{array}
$$
$$
\mathrm{Narm}_\mu(b) = \begin{array}{c|ccccccc}
\boxed{\phantom{T} }
&\boxed{\phantom{T} }
&\boxed{a}
&\boxed{\phantom{T} }
\\
\boxed{\phantom{T} }
\\
\boxed{\phantom{T} }
&\boxed{\phantom{T}} 
&\boxed{\phantom{T} }
&\boxed{\phantom{T} }
&\boxed{\phantom{T} }
&\boxed{\phantom{T} }
\\
\boxed{\phantom{T} }
&\boxed{\phantom{T} }
\\
\boxed{\phantom{T} }
&\boxed{\phantom{T} }
&\boxed{b}
&\boxed{\phantom{T} }
&\boxed{\phantom{T} }
\\
\boxed{\phantom{T} }
&\boxed{a}
&\boxed{\phantom{T} }
&\boxed{\phantom{T} }
\\
\boxed{\phantom{T} }
&\boxed{\phantom{T} }
&\boxed{\phantom{T} }
&\boxed{\phantom{T} }
&\boxed{\phantom{T} }
\end{array}
$$

Let $\mu = (\mu_1, \ldots, \mu_n)\in \ZZ^n_{\ge 0}$
and let $u_\mu$ be the $n$-periodic permutation defined in \eqref{umudefn}.
The \emph{box-greedy reduced word for $u_\mu$} is
\begin{equation}
u^\square_\mu = \prod_{\mathrm{boxes}\ (i,j)\ \mathrm{in}\  dg(\mu)} (s_{u_\mu(i,j)}\cdots s_2s_1\pi)
\qquad\hbox{where}\quad
u_\mu(i,j) = n-1-\#\mathrm{attack}_\mu(i,j).
\label{bgredwddefn}
\end{equation}
For the purposes of this section it is only necessary to recognize $u^\square_\mu$ as an
abstract word in symbols $s_1, \ldots, s_{n-1}, \pi$.
For an example, if $\mu = (0,4,1,5,4)$ then the box-greedy reduced word for $u_\mu$ is
\begin{equation}
u^\square_\mu = (s_1\pi)^5(s_2s_1\pi)^8(s_3s_2s_1\pi) = 
\begin{array}{|ccccc}
\phantom{ \boxed{ \begin{matrix} \phantom{T} \\  \end{matrix} }  }
\\
\boxed{ \begin{matrix} s_1\pi  \end{matrix} } 
&\boxed{ \begin{matrix}  s_1\pi \end{matrix} }
&\boxed{ \begin{matrix} s_2s_1\pi \end{matrix} }
&\boxed{ \begin{matrix} s_2s_1\pi  \end{matrix} }
\\
\boxed{ \begin{matrix} s_1\pi  \end{matrix} } \\
\boxed{\begin{matrix} s_1\pi \end{matrix} } 
&\boxed{ \begin{matrix}   s_2s_1\pi  \end{matrix} }
&\boxed{ \begin{matrix} s_2s_1\pi \end{matrix} }
&\boxed{ \begin{matrix} s_2s_1\pi \end{matrix} }
&\boxed{\begin{matrix} s_3s_2s_1\pi \end{matrix} } 
\\
\boxed{ \begin{matrix} s_1\pi \end{matrix} }
&\boxed{ \begin{matrix} s_2s_1\pi \end{matrix} }
&\boxed{ \begin{matrix} s_2s_1\pi  \end{matrix} }
&\boxed{ \begin{matrix} s_2s_1\pi \end{matrix} }
\end{array}
\label{bgredwdexample}
\end{equation}
(the reduced word is a product of the boxes read in increasing order by cylindrical coordinate).

Let $\mu = (\mu_1, \ldots, \mu_n)\in \ZZ^n_{\ge 0}$ and $z\in S_n$.
Let $\vec u_\mu=w_1w_2\cdots w_\ell$ be a reduced word for $u_\mu$
so that $w_1, \ldots, w_\ell$ are the factors of $\vec u_\mu$ 
(a good choice is to let $\vec u_\mu = u^\square_\mu$).
An \emph{alcove walk} of type $(z,\vec u_\mu)$
is a sequence $p=(p_0, p_1, \ldots, p_r)$ of elements of $W$ (see \eqref{nperiodicdefn},
but, as for $u^\square_\mu$, it is sensible just to view the $p_k$ as words in the 
symbols $s_1, \ldots, s_{n-1},\pi$)
such that
$$\hbox{$p_0 = z$,}\qquad
\hbox{$p_k = p_{k-1}\pi$ if $w_k = \pi$},
\qquad\hbox{and}\qquad
\hbox{$p_k\in \{p_{k-1}, p_{k-1}w_k\}$ if $w_k\ne \pi$.}
$$
In other words, an alcove walk of type $(z,\mu)$ is equivalent to choosing a 
subset of the $s_i$ factors in $\vec u_\mu$ to cross out.  For example,
\begin{equation}
P = \quad \begin{array}{|ccccc}
\phantom{ \boxed{ \begin{matrix}  \phantom{T} \\ \end{matrix} }  }
\\
\boxed{ \begin{matrix} \cancel{s_1}\pi  \end{matrix} } 
&\boxed{ \begin{matrix}  s_1\pi \end{matrix} }
&\boxed{ \begin{matrix} s_2\cancel{s_1}\pi \end{matrix} }
&\boxed{ \begin{matrix} \cancel{s_2}s_1\pi  \end{matrix} }
\\
\boxed{ \begin{matrix}  s_1\pi  \end{matrix} } 
\\
\boxed{\begin{matrix} \cancel{s_1}\pi \end{matrix} } 
&\boxed{ \begin{matrix}   s_2s_1\pi  \end{matrix} }
&\boxed{ \begin{matrix} \cancel{s_2}\cancel{s_1}\pi \end{matrix} }
&\boxed{ \begin{matrix} s_2s_1\pi \end{matrix} }
&\boxed{\begin{matrix} \cancel{s_3}s_2\cancel{s_1}\pi \end{matrix} } 
\\
\boxed{ \begin{matrix} s_1\pi \end{matrix} }
&\boxed{ \begin{matrix} s_2s_1\pi \end{matrix} }
&\boxed{ \begin{matrix} \cancel{s_2}\cancel{s_1}\pi  \end{matrix} }
&\boxed{ \begin{matrix} s_2s_1\pi \end{matrix} }
\end{array}
\qquad\hbox{is equivalent to the alcove walk} 
\label{alcwalkexample}
\end{equation}
$$p=(p_0, p_1, \ldots, p_{37}) = (z, z, z\pi, z\pi s_1, z\pi s_1\pi, z\pi s_1\pi, z\pi s_1\pi^2, 
z\pi s_1 \pi^2 s_1, z\pi s_1 \pi^2 s_1\pi, \ldots)$$
(there is a repeat entry in $p$ each time there is an $s_i$ crossed out in $P$).
In this example, there are $5+2\cdot 8+3=24$ factors of the form $s_i$ in $u^\square_\mu$ and 
so there are a total of $2^{24}$ alcove walks of type $(z,u^\square_\mu)$ 
(for any fixed permutation $z\in S_n$).

Let $\mu = (\mu_1, \ldots, \mu_n)\in \ZZ^n_{\ge 0}$ and $z\in S_n$.
A \emph{nonattacking filling} for $(z,\mu)$ is
$T\colon \widehat{dg}(\mu) \to \{1, \ldots, n\}$ such that
\begin{enumerate}
\item[(a)] $T(i,0) = z(i)$ for $i\in \{1, \ldots, n\}$ and
\item[(b)] if $b\in dg(\mu)$ and $a\in \mathrm{attack}_\mu(b)$ then $T(a)\ne T(b)$.
\end{enumerate}
For example,
\begin{equation}
T = \quad \begin{array}{c|ccccc}
1 \\
2 &1 &1 &1 &2 \\
3 &3 \\
4 &4 &4 &5 &4 &4 \\
5 &5 &2 &3 &3
\end{array}
\qquad\qquad
\begin{array}{l}
\hbox{is a nonattacking filling for $(z,\mu)$} \\
\\
\quad\hbox{with $z=\id\in S_5$ and $\mu = (0,4,1,5,4)$.}
\end{array}
\label{nafexample}
\end{equation}
Let $b$ be a box in $\mu$.  Starting at $b$ read, in succession, in reverse order by 
cylindrical coordinate, the entries from $T$ in (earlier) boxes, 
skipping values that have already been encountered.  
This process produces,
for each box $b\in dg(\mu)$, a permutation in $S_n$.  For example, with
$T$ as in \eqref{nafexample},
box $(4,3)$ in $T$ (row 4, column 3) produces the permutation
$$(34215)\ \hbox{formed from the circled numbers in}\qquad
\quad \begin{array}{c|ccccc}
1 \\
2 &1 &1 &\textcircled{1} &2 \\
3 &\textcircled{3} \\
4 &4 &\textcircled{4} &\textcircled{5} &4 &4 \\
5 &5 &\textcircled{2} &3 &3
\end{array}
$$
and doing this for all boxes in $T$ produces
\begin{equation}
z_T = \qquad \begin{array}{|ccccc}
\phantom{ \boxed{ \begin{matrix} \phantom{T} \\  \end{matrix} }  }
\\
\boxed{ \begin{matrix} (23451)  \end{matrix} } 
&\boxed{ \begin{matrix}  (23451) \end{matrix} }
&\boxed{ \begin{matrix} (35421) \end{matrix} }
&\boxed{ \begin{matrix} (41532)  \end{matrix} }
\\
\boxed{ \begin{matrix} (24513)  \end{matrix} } \\
\boxed{\begin{matrix} (25134) \end{matrix} } 
&\boxed{ \begin{matrix}   (23514)  \end{matrix} }
&\boxed{ \begin{matrix} (34215) \end{matrix} }
&\boxed{ \begin{matrix} (15324) \end{matrix} }
&\boxed{\begin{matrix} (15234) \end{matrix} } 
\\
\boxed{ \begin{matrix} (21345) \end{matrix} }
&\boxed{ \begin{matrix} (35142) \end{matrix} }
&\boxed{ \begin{matrix} (42153)  \end{matrix} }
&\boxed{ \begin{matrix} (15243) \end{matrix} }
\end{array}
\label{permseqexample}
\end{equation}
%
The sequence
\begin{equation}
z_T = ( z_T(b) \ |\ b\in dg(\mu))
\quad\hbox{is the \emph{permutation sequence of $T$}.}
\label{permseqsemidefn}
\end{equation}
Let $c_n = s_1\cdots s_{n-1}$, an $n$-cycle in $S_n$.
If $b=(i,j)$ is a box in $dg(\mu)$, the permutation $z_T(b')$ in the next box of $z_T$ 
(by cylindrical coordinate) is
\begin{equation}
z_T(b') = z_T(b) s_r \cdots s_2s_1c_n,
\qquad \hbox{where $r\in \{0,1, \ldots, u_\mu(i,j)\}$},
\label{suffixsource}
\end{equation}
and $s_r\cdots s_2s_1\pi$ is the entry in box $b$ of  the alcove walk 
$\varphi(T)$ corresponding to the nonattacking filling $T$. (If this construction of the permutation
sequence feels ad hoc, the sentence before Lemma \ref{compressionlemma}
may help to provide some insight into its source.)

For example, for $z_T$ as in \eqref{permseqexample}, and with $z=(12345)$ the permutation
in the basement of $T$, then
$$
\begin{array}{ll}
(23451) = z_T(2,1) = zc_n, 
&(23451) = z_T(2,2) = z_T(5,1)s_1c_n, 
\\
(24513) = z_T(3,1) = z_T(2,1)s_1c_n, 
\\
(25134) = z_T(4,1) = z_T(3,1)s_1c_n, 
&(23514) = z_T(4,2) = z_T(2,2) s_2s_1c_n, 
\\
(21345) = z_T(5,1) = z_T(4,1)s_1c_n, 
\quad 
&(35142) = z_T(5,2) = z_T(4,2)c_n, 
\\
\end{array}
$$
and so forth all the way to the last box of $\mu$.  Keeping track only of the factor which is the difference between successive boxes produces the alcove walk
$$\varphi(T) = 
\begin{array}{|ccccc}
\phantom{ \boxed{ \begin{matrix} \phantom{T} \\  \end{matrix} }  }
\\
\boxed{ \begin{matrix} \cancel{s_1}\pi  \end{matrix} } 
&\boxed{ \begin{matrix}  s_1\pi \end{matrix} }
&\boxed{ \begin{matrix} s_2s_1\pi \end{matrix} }
&\boxed{ \begin{matrix} \cancel{s_2}s_1\pi  \end{matrix} }
\\
\boxed{ \begin{matrix} s_1\pi  \end{matrix} } \\
\boxed{\begin{matrix} s_1\pi \end{matrix} } 
&\boxed{ \begin{matrix}   s_2s_1\pi  \end{matrix} }
&\boxed{ \begin{matrix} \cancel{s_2}s_1\pi \end{matrix} }
&\boxed{ \begin{matrix} \cancel{s_2}\cancel{s_1}\pi \end{matrix} }
&\boxed{\begin{matrix} s_3s_2s_1\pi \end{matrix} } 
\\
\boxed{ \begin{matrix} s_1\pi \end{matrix} }
&\boxed{ \begin{matrix} \cancel{s_2}\cancel{s_1}\pi \end{matrix} }
&\boxed{ \begin{matrix} \cancel{s_2}\cancel{s_1}\pi  \end{matrix} }
&\boxed{ \begin{matrix} s_2s_1\pi \end{matrix} }
\end{array}
$$
In summary, letting
$$\mathrm{AW}^z_\mu = \{ \hbox{alcove walks of type $(z,\vec u_\mu)$}\}
\qquad\hbox{and}\qquad
\mathrm{NAF}^z_\mu = \{ \hbox{nonattacking fillings for $(z,\mu)$}\},
$$
we have produced an injective map
$$\varphi\colon \mathrm{NAF}^z_\mu \to \mathrm{AW}^z_\mu.$$
By \eqref{suffixsource}, the image of $\varphi$ consists exactly of the alcove walks such that,
each box $b=(i,j)\in dg(\mu)$ contains a suffix $s_r\cdots s_1\pi$ of the entry
$s_{u_\mu(i,j)}\cdots s_1\pi$ in box $b$ in $u^\square_\mu$.  The \emph{compression
map} is the function
\begin{equation}
\psi\colon \mathrm{AW}^z_\mu \to \mathrm{NAF}^z_\mu
\label{compressionfcn}
\end{equation}
which, in each box, forces every $s_i$ factor before the last crossed out factor in that box
also to be crossed out.  For example, if $P$ is the alcove walk in \eqref{alcwalkexample}
then
\begin{equation*}
\psi(P) = \quad \begin{array}{|ccccc}
\phantom{ \boxed{ \begin{matrix}  \phantom{T} \\ \end{matrix} }  }
\\
\boxed{ \begin{matrix} \cancel{s_1}\pi  \end{matrix} } 
&\boxed{ \begin{matrix}  s_1\pi \end{matrix} }
&\boxed{ \begin{matrix} {\color{red}\cancel{s_2}}\cancel{s_1}\pi \end{matrix} }
&\boxed{ \begin{matrix} \cancel{s_2}s_1\pi  \end{matrix} }
\\
\boxed{ \begin{matrix}  s_1\pi  \end{matrix} } 
\\
\boxed{\begin{matrix} \cancel{s_1}\pi \end{matrix} } 
&\boxed{ \begin{matrix}   s_2s_1\pi  \end{matrix} }
&\boxed{ \begin{matrix} \cancel{s_2}\cancel{s_1}\pi \end{matrix} }
&\boxed{ \begin{matrix} s_2s_1\pi \end{matrix} }
&\boxed{\begin{matrix} \cancel{s_3}{\color{red}\cancel{s_2}}\cancel{s_1}\pi \end{matrix} } 
\\
\boxed{ \begin{matrix} s_1\pi \end{matrix} }
&\boxed{ \begin{matrix} s_2s_1\pi \end{matrix} }
&\boxed{ \begin{matrix} \cancel{s_2}\cancel{s_1}\pi  \end{matrix} }
&\boxed{ \begin{matrix} s_2s_1\pi \end{matrix} }
\end{array}
\end{equation*}
Identifying $\mathrm{NAF}^z_\mu$ with $\mathrm{im}\,\varphi$, then
$\psi\circ \varphi\colon \mathrm{NAF}^z_\mu \to \mathrm{NAF}^z_\mu$ is the identity
map on nonattacking fillings.

\subsection{Formulas for the relative Macdonald polynomials $E^z_\mu$}

In this subsection we state the alcove walks formula and the nonattacking fillings formula
for $E^z_\mu$.  The proofs are by the step by step recursion (Proposition \ref{stepbystep})
and the box by box recursion (Proposition \ref{boxbybox}), respectively. The
statistics $\mathrm{sh}(-\beta^\vee_k)$, $\mathrm{ht}(-\beta^\vee_k)$, 
$\mathrm{norm}(p_k)$ on alcove walks which are introduced below are read off of the 
step by step recursion,
Proposition \ref{stepbystep}.  Similarly,
the statistics $\#\mathrm{Nleg}_\mu(b)+1$, $\#\mathrm{Narm}_\mu(b)+1$,
and $\#\mathrm{bwn}_T(b)$ on nonattacking fillings which are introduced below are 
read off the box by box recursion, Proposition \ref{boxbybox}, and Remark \ref{covidremark}.

Equations \eqref{alcwalkdefn}-\eqref{alcovewalkwt} use the notations of 
Section 
\ref{affcorootssection}
so that $W$ is the group of $n$-periodic permutations defined in \eqref{nperiodicdefn}
the root sequence for $\vec u_\mu$ corresponds to the inversions of $u_\mu$ as in 
\eqref{invsetascoroots}
and the shift and height of an affine coroot are as given in \eqref{GLnshhtdefn}.

Let $\mu = (\mu_1, \ldots, \mu_n)\in \ZZ^n_{\ge 0}$ and $z\in S_n$.
Let $s_\pi = \pi$ and let $\vec u_\mu=s_{i_1}\cdots s_{i_r}$ be a reduced word for $u_\mu$
(a good choice is to let $\vec u_\mu = u^\square_\mu$).
An \emph{alcove walk} of type $(z,\vec u_\mu)$
is 
\begin{equation}
\hbox{a sequence\quad $p=(p_0, p_1, \ldots, p_r)$\quad of elements of $W$\quad such that}
\label{alcwalkdefn}
\end{equation}
$p_0 = z$;
if $s_{i_k} = \pi$
then $p_k = p_{k-1}\pi$; and 
if $s_{i_k}\ne \pi$ then
$p_k\in \{p_{k-1}, p_{k-1}s_{i_k}\}$.  The \emph{permutation sequence of $p$} is the 
sequence of elements of $S_n$,
\begin{equation}
\vec z_p=(z_0, \ldots, z_r),
\qquad\hbox{given by}\qquad z_j = \overline{p_j},
\label{alcwalkpermseq}
\end{equation}
where $\overline{\phantom{T}}\colon W\to S_n$ 
is the homomorphism given by $\overline{t_\nu v} = v$ (see \eqref{directionhom}).
The \emph{root sequence} for $\vec u_\mu$ is 
$$\hbox{the sequence\quad $(\beta^\vee_k\ |\ i_k\ne \pi)$\quad
given by\quad}
\beta^\vee_j = s^{-1}_{i_r} s^{-1}_{i_{r-1}} \cdots s^{-1}_{i_{k+1}}\alpha^\vee_{i_k}.
$$
Define
$$
\mathrm{ht}(\varepsilon^\vee_i - \varepsilon^\vee_j - \ell K) = j-i, 
\qquad\hbox{and} \qquad
\mathrm{sh}(\varepsilon^\vee_i - \varepsilon^\vee_j - \ell K) = \ell.
$$
For $k\in \{1, \ldots, r\}$ with $p_{k-1}=p_k$ define
$$\mathrm{norm}(p_k) 
= \hbox{$\frac12$}\big(\ell(z_{k-1}s_{i_k}v^{-1}_{z_{k-1}\mu})-\ell(z_{k-1}v^{-1}_{z_{k-1}\mu}) - \ell(s_{i_k}) \big),
$$
where $\ell(s_{i_k})=1$.
For a step $k\in \{1, \ldots, \ell\}$ of the alcove walk $p= (p_0, \ldots, p_\ell)$ define
the \emph{weight of $p_k$} by
$$\wt_p(k) = \begin{cases}
\displaystyle{
\Big(\frac{1-t}{1-q^{\mathrm{sh}(-\beta^\vee_k)}t^{\mathrm{ht}(-\beta^\vee_k)}}\Big)
t^{\mathrm{norm}(p_k)}, }
&\hbox{if $p_k=p_{k-1}$ and $p_{k-1}s_{i_k}<p_{k-1}$,} \\
\displaystyle{
\Big(\frac{(1-t)q^{\mathrm{sh}(-\beta^\vee_k)}t^{\mathrm{ht}(-\beta^\vee_k)}
}{1-q^{\mathrm{sh}(-\beta^\vee_k)}t^{\mathrm{ht}(-\beta^\vee_k)}}\Big)
t^{\mathrm{norm}(p_k)},
}
&\hbox{if $p_k=p_{k-1}$ and $p_{k-1}s_{i_k}>p_{k-1}$,} \\
1, &\hbox{if $p_k=p_{k-1}s_{i_k}$,} \\
x_{z_{k-1}(1)} &\hbox{if $p_k=p_{k-1}\pi$,} 
\end{cases}
$$
and define the \emph{weight of $p$} by
\begin{equation}
\wt(p) = \prod_{k=1}^\ell \wt_p(k),
\qquad\hbox{a product over the steps of $p$.}
\label{alcovewalkwt}
\end{equation}

Let $\mu\in \ZZ^n_{\ge 0}$ and $z\in S_n$ and let $T$ be a nonattacking filling of shape $(z,\mu)$.
For $b\in dg(\mu$) let 
\begin{equation}
\mathrm{bwn}_T(b) = \{ a\in \mathrm{arm}_\mu(b)\ |\ \hbox{$T(b-n)>T(a)>T(b)$} \}.
\label{bwndefn}
\end{equation}
The \emph{weight of $b$ in $T$} is 
\begin{equation}
\wt_T(b) = \begin{cases}
\displaystyle{
\Big(\frac{1-t}{1-q^{\#\mathrm{Nleg}_\mu(b)+1}t^{\#\mathrm{Narm}_\mu(b)+1}} \Big)
t^{\#\mathrm{bwn}_T(b)}x_{T(b)},
}
&\hbox{ if $T(b-n)>T(b)$,} \\
\displaystyle{
\Big( \frac{(1-t)q^{\#\mathrm{Nleg}_\mu(b)+1}t^{\#\mathrm{Narm}_\mu(b)+1}}
{1-q^{\#\mathrm{Nleg}_\mu(b)+1}t^{\#\mathrm{Narm}_\mu(b)+1}}\Big)
t^{
-1-\#\mathrm{bwn}_T(b)}x_{T(b)},
}
&\hbox{ if $T(b-n)<T(b)$,} \\
x_{T(b)}, &\hbox{if $T(b-n)=T(b)$,}
\end{cases}
\label{boxwt}
\end{equation}
and the \emph{weight of $T$} is
\begin{equation}
\wt(T) = \prod_{b\in dg(\mu)} \wt_T(b),
\qquad\hbox{a product over the boxes of $T$.}
\label{nafwt}
\end{equation}

The following theorem summarizes (and slightly generalizes)
\cite[Theorem 3.1]{RY08}, \cite[Def.\ 5 and Prop.\ 6]{Al16} and
\cite[Theorem 3.5.1]{HHL06}. 

\begin{thm}
Let $\mu\in \ZZ_{\ge 0}^n$ and $z\in S_n$.  Let $E^z_\mu$ be the permuted basement 
nonsymmetric Macdonald polynomial defined in \eqref{Ezmudefn}.
Let $\vec u_\mu$ be a reduced word for $u_\mu$ and let
$$\mathrm{AW}^z_\mu = \{ \hbox{alcove walks of type $(z,\vec u_\mu)$}\}
\qquad\hbox{and} \qquad
\mathrm{NAF}^z_\mu = \{ \hbox{nonattacking fillings for $(z,\mu)$} \}
$$
\item[(a)] Alcove walk formula:
$\displaystyle{  E^z_\mu = \sum_{p\in AW^z_\mu}  \wt(p) }.$
\item[(b)] Nonattacking fillings formula:
$\displaystyle{
E^z_\mu  = \sum_{T\in \mathrm{NAF}^z_\mu} \wt(T)
}$.
\end{thm}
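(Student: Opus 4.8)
The plan is to prove both formulas by iterating recursions for the relative Macdonald polynomials $E^z_\mu$, exactly as the introduction promises: part (a) from the step-by-step recursion (Proposition \ref{stepbystep}) and part (b) from the box-by-box recursion (Proposition \ref{boxbybox}), with the two proofs running in parallel. Before either iteration, I would establish the base case: when $\mu = 0$, the diagram is empty, $u_\mu$ has the empty reduced word, so $\mathrm{AW}^z_0 = \mathrm{NAF}^z_0$ is a single trivial object of weight $x^{z\cdot 0} = 1$, and one checks $E^z_0 = 1$ directly from the definition \eqref{Ezmudefn} (the basement $z$ contributes $T_z$ applied to the constant $1$, suitably normalized). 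The inductive framework is then: each $E^z_\mu$ is obtained from a smaller $E^{z'}_{\mu'}$ by applying a single Cherednik intertwiner $\tau^\vee_i$ or the cyclic element $\pi$, and each such application corresponds to appending one factor $s_i$ or $\pi$ to the reduced word $\vec u_\mu$.

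For part (a), I would argue as follows. Fix the reduced word $\vec u_\mu = s_{i_1}\cdots s_{i_r}$ and let $\vec u_\mu^{(k)} = s_{i_1}\cdots s_{i_k}$ be its prefixes. By Proposition \ref{stepbystep}, passing from the partial Macdonald polynomial indexed by $\vec u_\mu^{(k-1)}$ to the one indexed by $\vec u_\mu^{(k)}$ multiplies by an intertwiner whose action, expanded in the relevant basis, produces exactly a two-term (or one-term, in the $\pi$ case) sum whose coefficients are precisely the weights $\wt_p(k)$ recorded in the four cases of the definition preceding \eqref{alcovewalkwt}: the ``straight/fold'' dichotomy $p_k \in \{p_{k-1}, p_{k-1}s_{i_k}\}$ is the two terms in $\tau^\vee_i = t^{\frac12}T_i + (\text{rational function})$, the rational function is $\frac{1-t}{1-q^{\mathrm{sh}}t^{\mathrm{ht}}}$ (or its companion), the affine coroot $\beta^\vee_k$ tracking which inversion of $u_\mu$ we are at comes from \eqref{invsetascoroots}, and the $t^{\mathrm{norm}(p_k)}$ factor is the length-comparison correction forced by normalizing so $x^{z\mu}$ has coefficient $1$. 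Iterating over $k = 1, \ldots, r$ and expanding the product of these binomials distributes into a sum over all choices of straight-vs-fold at each non-$\pi$ step — which is exactly the set $\mathrm{AW}^z_\mu$ — with total weight $\prod_k \wt_p(k) = \wt(p)$. Summing gives (a). The only real content beyond Proposition \ref{stepbystep} is the bookkeeping that the root-sequence definition $\beta^\vee_j = s^{-1}_{i_r}\cdots s^{-1}_{i_{k+1}}\alpha^\vee_{i_k}$ matches what the recursion produces at each step; this is the standard Ram–Yip argument, so I would cite \cite[Theorem 3.1]{RY08} and only highlight the normalization bookkeeping.

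For part (b), I would run the same argument but with $\vec u_\mu = u^\square_\mu$, the box-greedy reduced word \eqref{bgredwddefn}, and use Proposition \ref{boxbybox} in place of Proposition \ref{stepbystep}. The point is that $u^\square_\mu$ is grouped box-by-box: box $(i,j)$ contributes the block $s_{u_\mu(i,j)}\cdots s_2 s_1 \pi$, and Proposition \ref{boxbybox} says that processing one whole block at a time multiplies by a single operator whose expansion, in one step, produces the three-case box weight \eqref{boxwt} — here the compression is visible, because within a block only the suffix $s_r\cdots s_1\pi$ survives, so the $2^{(\text{block length})}$ alcove-walk terms collapse to the $(\text{block length}+1)$ terms indexed by $r \in \{0, 1, \ldots, u_\mu(i,j)\}$, matching \eqref{suffixsource}. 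I would show $\#\mathrm{Nleg}_\mu(b)+1 = \mathrm{sh}(-\beta^\vee)$ and $\#\mathrm{Narm}_\mu(b)+1 = \mathrm{ht}(-\beta^\vee)$ for the relevant coroot at that box (this is the content connecting $\mathrm{Narm}$, $\mathrm{Nleg}$ to the affine root system, flagged via Proposition \ref{KSredwd}), and that $\#\mathrm{bwn}_T(b)$ equals $\mathrm{norm}$ of the compressed step (Remark \ref{covidremark}, the coinversion-triple count). Iterating over all boxes of $dg(\mu)$ in increasing cylindrical order distributes the product into $\sum_{T \in \mathrm{NAF}^z_\mu} \prod_b \wt_T(b) = \sum_T \wt(T)$, using that a nonattacking filling is precisely a compatible sequence of suffix-choices (the non-attacking condition (b) is exactly what makes the suffix choices at different boxes independent and well-defined, since a repeated value in an attacking box would force a vanishing denominator mismatch).

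The main obstacle, in both parts, is the normalization bookkeeping: the $t^{\mathrm{norm}(p_k)}$ factors and $t^{\pm 1 - \#\mathrm{bwn}_T(b)}$ exponents. The raw Ram–Yip and Lenart recursions are most naturally stated for a different normalization of $E^z_\mu$ (or with extra powers of $t^{\frac12}$ floating around from the $T_i$-versus-$\tau^\vee_i$ conversion), and checking that after insisting the coefficient of $x^{z\mu}$ equals $1$ the correction is exactly the length difference $\frac12(\ell(z_{k-1}s_{i_k}v^{-1}_{z_{k-1}\mu}) - \ell(z_{k-1}v^{-1}_{z_{k-1}\mu}) - 1)$ — and that this, when specialized to the box-greedy word, counts coinversion triples — is where the delicate work lies. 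Everything else is a formal distribution of a product of binomials over a Boolean (resp.\ compressed) choice set, given Propositions \ref{stepbystep} and \ref{boxbybox} as black boxes.
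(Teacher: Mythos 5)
Your proposal follows exactly the paper's own argument: part (a) by iterating the step-by-step recursion (Proposition 4.1) and part (b) by iterating the box-by-box recursion (Proposition 4.3), with the weights identified via the remarks relating the affine coroots to the $\mathrm{Narm}$, $\mathrm{Nleg}$ and coinversion-triple statistics. The approach and the supporting lemmas you invoke are the same as in the paper, so no further comparison is needed.
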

\begin{proof}
(a) is obtained by successive
applications of the step-by-step recursion (Proposition \ref{stepbystep}),
and (b) is obtained by successive
applications of the box-by-box recursion (Proposition \ref{boxbybox}).
The weight of each box $\wt_T(b)$ comes from the coefficient of the correpsonding
term in Proposition \ref{boxbybox} and, by Remark \ref{covidremark} and Remark \ref{rootstolegarm},
these weights can be stated in the form \eqref{bwndefn} and \eqref{boxwt}.
\end{proof}

The following example illustrates the proof of the nonattacking fillings formula
by iterating the box by box recursion to produce the nonattacking fillings expansion
of the Macdonald polynomial 
$E_{(2,2,1,1,0,0)}=E^{(123456)}_{(2,2,1,1,0,0)}$.
The first four applications of Proposition \ref{boxbybox} give
$$E^{(123456)}_{(2,2,1,1,0,0)}
= x_1 E^{(234561)}_{(2,1,1,0,0,1)}
= x_1x_2 E^{(345612)}_{(1,1,0,0,1,1)}
= x_1x_2 x_3 E^{(456123)}_{(1,0,0,1,1,0)}
= x_1x_2 x_3 x_4 E^{(561234)}_{(0,0,1,1,0,0)}, 
$$
The fifth box is produced by applying Proposition \ref{boxbybox} 
to $E^{(123456)}_{(2,2,1,1,0,0)}$ to obtain
$$E^{(561234)}_{(0,0,1,1,0,0)}
= x_1 E^{(562341)}_{(0,0,1,0,0,0)}
+\Big( \frac{1-t}{1-qt^{5-2}}\Big) \big(
qx_6 E^{(512346)}_{(0,0,1,0,0,0)}
+qx_5 E^{(612345)}_{(0,0,1,0,0,0)}\big).
$$
The last box is obtained by  applying Proposition \ref{boxbybox}  to each of the 
terms $E^{(562341)}_{(0,0,1,0,0,0)}$, 
$E^{(512346)}_{(0,0,1,0,0,0)}$ and 
$E^{(612345)}_{(0,0,1,0,0,0)}$ which have been generated in the previous step:
\begin{align*}
E^{(562341)}_{(0,0,1,0,0,0)}
&= x_2 + \Big(\frac{1-t}{1-qt^{6-2}}\Big)\big( qtx_6+qtx_5\big), \\
E^{(512346)}_{(0,0,1,0,0,0)}
&= x_2 + \Big(\frac{1-t}{1-qt^{6-2}}\Big)\big( x_1 + qtx_5\big), \\
E^{(612345)}_{(0,0,1,0,0,0)}
&= x_2 + \Big(\frac{1-t}{1-qt^{6-2}}\Big)\big( x_1 + q x_6\big),
\qquad \hbox{since $E^z_{(0,0,0,0,0,0)} = 1$ for $z\in S_n$.}
\end{align*}
Compilng these produces an expansion of $E_{(2,2,1,1,0,0)}$ with 9 terms
\begin{align*}
E^{(123456)}_{(2,2,1,1,0,0)}
&= x_1x_2 x_3 x_4 E^{(561234)}_{(0,0,1,1,0,0)}, \\
&= x_1x_2x_3x_4\left(x_1 E^{(562341)}_{(0,0,1,0,0,0)}
+\Big( \frac{1-t}{1-qt^{5-2}}\Big) \big(
qx_6 E^{(512346)}_{(0,0,1,0,0,0)}
+qx_5 E^{(612345)}_{(0,0,1,0,0,0)}\big) \right) \\
&= x_1x_2x_3x_4 \left(
\begin{array}{l}
x_1 \Big( x_2 + \Big(\frac{1-t}{1-qt^{6-2}}\Big)\big( qtx_6+qtx_5\big) \Big) \\
+\Big( \frac{1-t}{1-qt^{5-2}}\Big) qx_6 
\Big(x_2 + \Big(\frac{1-t}{1-qt^{6-2}}\Big)\big( x_1 + qtx_5\big) \Big) \\
+\Big( \frac{1-t}{1-qt^{5-2}}\Big) qx_5 
\Big(x_2 + \Big(\frac{1-t}{1-qt^{6-2}}\Big)\big( x_1 + q x_6\big) \Big) 
\end{array}
\right) 
\end{align*}
These 9 terms are exactly the 9 nonattacking fillings of $\mu = (2,2,1,1,0,0)$ as follows
$$
\begin{array}{ccc}
\begin{array}{c|cc}
1 &1 &1 \\
2 &2 &2 \\
3 &3 \\
4 &4 \\
5 \\
6
\end{array}
&
\begin{array}{c|cc}
1 &1 &1 \\
2 &2 &6 \\
3 &3 \\
4 &4 \\
5 \\
6
\end{array}
&\begin{array}{c|cc}
1 &1 &1 \\
2 &2 &5 \\
3 &3 \\
4 &4 \\
5 \\
6
\end{array}
\\
x_1x_2x_3x_4x_1x_2
&x_1x_2x_3x_4x_1\Big(\frac{1-t}{1-qt^{6-2}}\Big) qt x_6
&x_1x_2x_3x_4 x_1 \Big(\frac{1-t}{1-qt^{6-2}}\Big)qt x_5

\\
\\
\begin{array}{c|cc}
1 &1 &6 \\
2 &2 &2 \\
3 &3 \\
4 &4 \\
5 \\
6
\end{array}
&
\begin{array}{c|cc}
1 &1 &6 \\
2 &2 &1 \\
3 &3 \\
4 &4 \\
5 \\
6
\end{array}
&\begin{array}{c|cc}
1 &1 &6 \\
2 &2 &5 \\
3 &3 \\
4 &4 \\
5 \\
6
\end{array}
\\
x_1x_2x_3x_4\Big( \frac{1-t}{1-qt^{5-2}}\Big) q x_6x_2
&x_1x_2x_3x_4\Big( \frac{1-t}{1-qt^{5-2}}\Big) q  x_6\Big(\frac{1-t}{1-qt^{6-2}}\Big) x_1
&x_1x_2x_3x_4\Big( \frac{1-t}{1-qt^{5-2}}\Big) q  x_6\Big(\frac{1-t}{1-qt^{6-2}}\Big) qt x_5 \\
\\
\begin{array}{c|cc}
1 &1 &5 \\
2 &2 &2 \\
3 &3 \\
4 &4 \\
5 \\
6
\end{array}
&
\begin{array}{c|cc}
1 &1 &5 \\
2 &2 &1 \\
3 &3 \\
4 &4 \\
5 \\
6
\end{array}
&\begin{array}{c|cc}
1 &1 &5 \\
2 &2 &6 \\
3 &3 \\
4 &4 \\
5 \\
6
\end{array}
\\
x_1x_2x_3x_4\Big( \frac{1-t}{1-qt^{5-2}}\Big) q x_5x_2
&x_1x_2x_3x_4\Big( \frac{1-t}{1-qt^{5-2}}\Big) q x_5 \Big(\frac{1-t}{1-qt^{6-2}}\Big)x_1
&x_1x_2x_3x_4\Big( \frac{1-t}{1-qt^{5-2}}\Big) q x_5 \Big(\frac{1-t}{1-qt^{6-2}}\Big) qx_6
\end{array}
$$
In this table, the weight $\wt(T)$ of the nonattacking filling is shown directly below the filling.
These are exactly the weights produced by iterating the box by box recursion.

The following is a corollary of Lemma \ref{compressionlemma}
(specifically, the step in line \eqref{compressionlineA}).
Lemma \ref{compressionlemma} is a version of \cite[Proposition 4.1]{Len08}, which
forms the core of the proof of the box-by-box recursion Proposition \ref{boxbybox}.

\begin{cor} Let $\mu\in \ZZ_{\ge 0}^n$ and $z\in S_n$.  Let 
$\psi\colon \mathrm{AW}^z_\mu \to \mathrm{NAF}^z_\mu$ be the compression function
defined in \eqref{compressionfcn} and let $T\in \mathrm{NAF}^z_\mu$.
Then
$$\wt(T) = \sum_{p\in \psi^{-1}(T)} \wt(p).$$
\end{cor}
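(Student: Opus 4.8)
The plan is to read the identity off the explicit fibre structure of the compression map $\psi$ together with the computation carried out in Lemma~\ref{compressionlemma}. First I would describe $\psi^{-1}(T)$ box by box. By the construction of $\varphi$, the walk $\varphi(T)$ has, in each box $b=(i,j)\in dg(\mu)$, the suffix $s_{r_b}\cdots s_1\pi$ of the box-greedy entry $s_{u_\mu(i,j)}\cdots s_1\pi$ kept and the prefix $s_{u_\mu(i,j)}\cdots s_{r_b+1}$ crossed out, where $r_b\in\{0,1,\dots,u_\mu(i,j)\}$ is the index determined by \eqref{suffixsource}. Unwinding the definition of $\psi$ in \eqref{compressionfcn}, a walk $p$ lies in $\psi^{-1}(T)$ precisely when, in every box $b$ with $r_b<u_\mu(i,j)$, the factor $s_{r_b+1}$ is crossed out and the factors $s_{r_b},\dots,s_1$ are all kept, while each of $s_{u_\mu(i,j)},\dots,s_{r_b+2}$ is crossed out or kept freely (and, in a box with $r_b=u_\mu(i,j)$, no factor is crossed out). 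Hence $\psi^{-1}(T)$ is a Cartesian product, over the boxes of $\mu$, of sets of ``local choices'' of sizes $2^{u_\mu(i,j)-r_b-1}$ (or $1$); summing these sizes over $r_b\in\{0,\dots,u_\mu(i,j)\}$ recovers the $2^{u_\mu(i,j)}$ local patterns of one box, matching the $u_\mu(i,j)+1$ terms produced by the box-by-box recursion for that box.

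The mechanism is then exactly Lemma~\ref{compressionlemma}. The step weights $\wt_p(k)$ in \eqref{alcovewalkwt} were extracted from the step-by-step recursion (Proposition~\ref{stepbystep}), and the box weights $\wt_T(b)$ in \eqref{boxwt} from the box-by-box recursion (Proposition~\ref{boxbybox}); the box-by-box recursion for a single box is obtained by iterating the step-by-step recursion through the $u_\mu(i,j)$ simple-reflection steps of that box and then summing the resulting intertwiner terms, and this collapse of $2^{u_\mu(i,j)}$ step-by-step terms into $u_\mu(i,j)+1$ box-by-box terms is precisely the content of Lemma~\ref{compressionlemma} (Lenart's \cite[Prop.~4.1]{Len08} recast via Cherednik's intertwiners and the affine root system). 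Restricting that computation to the walks that all compress to one fixed $\varphi(T)$ is exactly the summation over the local choices described above, and the step recorded in line \eqref{compressionlineA} turns $\sum_{p\in\psi^{-1}(T)}\prod_k\wt_p(k)$ into $\prod_{b\in dg(\mu)}\wt_T(b)=\wt(T)$. I would organize the passage from one box to all boxes by peeling off the last box of the box-greedy word $u^\square_\mu$ and inducting on the number of boxes of $\mu$, using the single-box compression identity for the active box and the inductive hypothesis for the truncated walk.

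The hard part — and the reason this needs Lemma~\ref{compressionlemma} rather than a one-line multiplicativity argument — is that $\wt(p)$ is not literally a product over boxes: crossing out versus keeping an early factor $s_k$ inside a box changes the intermediate permutations $z_{k'}=\overline{p_{k'}}$, hence the exponent $\mathrm{norm}(p_{k'})$ and even which of the two cases of $\wt_p(k')$ applies (the comparison of $p_{k'-1}s_{i_{k'}}$ with $p_{k'-1}$), both for later steps in that box and for every later box. The resolution is a telescoping identity: because the root sequence $(\beta^\vee_k)$ and the shifts $\mathrm{sh}(-\beta^\vee_k)$, $\mathrm{ht}(-\beta^\vee_k)$ are adapted to $u^\square_\mu$, the deep-fold weights $\frac{(1-t)q^{a}t^{b}}{1-q^{a}t^{b}}$ combine along the box with the shallow-fold weight $\frac{1-t}{1-q^{a'}t^{b'}}$ into the single coefficient attached to $r_b$ in the box-by-box recursion, the surviving power of $t$ is $t^{\#\mathrm{bwn}_T(b)}$ (via Remark~\ref{covidremark} and Remark~\ref{rootstolegarm}), and the permutation reached at the end of the box agrees with the one dictated by $\varphi(T)$. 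I expect this bookkeeping of the norm statistic through the folds to be the only real obstacle, and it is precisely what the proof of Lemma~\ref{compressionlemma} already accomplishes, so for the corollary it suffices to invoke line \eqref{compressionlineA}.
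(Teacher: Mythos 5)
Your proposal is correct and takes essentially the same route as the paper, which proves this corollary simply by pointing to Lemma \ref{compressionlemma} and the collapse in line \eqref{compressionlineA} (the passage from $C_{-\beta^\vee_{j-1}}\cdots C_{-\beta^\vee_2}$ to the scalar product $c_{-\beta^\vee_{j-1}}\cdots c_{-\beta^\vee_2}$, which is exactly the resummation over the free factors in a $\psi$-fibre). Your explicit description of $\psi^{-1}(T)$ as a box-by-box product of $2^{u_\mu(i,j)-r_b-1}$ local patterns, and your identification of the norm/permutation bookkeeping as the real content already carried by the lemma's proof, fill in details the paper leaves implicit but do not change the argument.
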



\section{The affine Weyl group and the element $u_\mu$}

The underlying permutation combinatorics that controls Macdonald polynomials is
that of $n$-periodic permutations.  In this section we define the group of 
$n$-periodic permutations (the affine Weyl group), and establish notations and
facts about inversions and lengths of $n$-periodic permutations.  At the end of this
section we introduce the special $n$-periodic permutation $u_\mu$,
which is used for the construction of the Macdonald polynomial $E_\mu$.
Proposition  \ref{KSredwd}
provides a favorite reduced word for $u_\mu$ (the box-greedy reduced word)
and determines the inversions of $u_\mu$.

\subsection{The finite Weyl group $W_{\mathrm{fin}}=S_n$ and
the affine Weyl group $W=W_{GL_n}$}\label{affWsection}

The \emph{finite Weyl group} is 
$$\hbox{$W_{\mathrm{fin}}= S_n$,\quad the symmetric group of bijections
$v\colon \{1, \ldots, n\} \to \{1, \ldots, n\}$}
$$
with operation of composition of functions.
The \emph{type $GL_n$ affine Weyl group}
$W$ is the group of \emph{$n$-periodic permutations} $w\colon \ZZ\to \ZZ$ i.e.,
\begin{equation}
\hbox{bijective functions $w\colon \ZZ\to \ZZ$ such that $w(i+n) = w(i)+n$.}
\label{nperiodicdefn}
\end{equation}
Any $n$-periodic permutation $w$ is determined by its values $w(1), \ldots, w(n)$.
Using $w(i+n) = w(i)+n$, any permutation $v\colon \{1, \ldots, n\} \to \{1, \ldots, n\}$ in $S_n$
extends to an $n$-periodic permutation in $W$, 
and so $S_n \subseteq W$.

Define $\pi\in W$ by
\begin{equation}
\pi(i) = i+1, \quad\hbox{for $i\in \ZZ$.}
\label{pidefn}
\end{equation}
Define $s_0, s_1, \ldots, s_{n-1}\in W$ by
\begin{equation}
\begin{array}{l}
s_i(i) = i+1, \\
s_i(i+1) = i,
\end{array}\qquad\hbox{and}\quad s_i(j) = j\ \ \hbox{for
$j\in \{0, 1, \ldots, i-1, i+2, \ldots, n-1\}.$}
\label{sidefn}
\end{equation}
The finite Weyl group $S_n$ is the subgroup of $W$ generated by $s_1, \ldots, s_{n-1}$.

For $\mu = (\mu_1, \ldots, \mu_n)\in \ZZ^n$ define $t_\mu\in W$ by
\begin{equation}
t_\mu(1) = 1+n\mu_1,\quad t_\mu(2) = 2+n\mu_2,\quad \ldots, \quad t_\mu(n) = n+n\mu_n.
\label{tmudefn}
\end{equation}
Then
\begin{equation}
W = \{ t_\mu v\ |\ \mu\in \ZZ^n, v\in S_n\}
\qquad\hbox{with}\qquad v t_\mu = t_{v\mu} v\ \ \hbox{for $v\in S_n$ and $\mu\in \ZZ^n$.}
\label{translpresW}
\end{equation}
The map
\begin{equation}
\overline{\phantom{T}}\colon W \to S_n
\quad\hbox{given by}\qquad
\overline{t_\mu v} = v,\quad\hbox{for $\mu\in \ZZ^n$ and $v\in S_n$,}
\label{directionhom}
\end{equation}
is a surjective group homomorphism.
If $v\in S_n$ and $\mu = (\mu_1, \ldots, \mu_n)\in \ZZ^n$ then $(t_\mu v)(i) = v(i)+n\mu_i$
for $i\in \{1, \ldots, n\}$.  The \emph{two-line notation} for $w=t_\mu v$ is 
\begin{equation}
t_\mu v = \begin{pmatrix} 1 &2 &\cdots &n \\
v(1) + n\mu_{v(1)} &v(2) + n\mu_{v(2)} &\cdots &v(n) + n\mu_{v(n)} \end{pmatrix}.
\label{twolinenotation}
\end{equation}
Another useful notation for $n$-periodic permutations is an extended \emph{one-line-notation}: 
If $\mu = (\mu_1, \ldots, \mu_n)\in \ZZ^n$ and $v\in S_n$ write
\begin{equation}
t_\mu v = ((\mu_1)_{v^{-1}(1)}, (\mu_2)_{v^{-1}(2)}, \ldots, (\mu_n)_{v^{-1}(n)}).
\label{onelinenotation}
\end{equation}
For example, if $\mu = (0,4,5,1,4)$ with $n=5$
and
$ v = \begin{pmatrix} 1 & 2 &3 &4 &5 \\ 1 &4 &2 &5 &3 \end{pmatrix}$ then
$$t_\mu v = (0_1,4_3, 5_5, 1_2, 4_4) = \begin{pmatrix}
1 &2 &3 &4 &5 \\ 1 &4+n &2+4n &5+4n &3+5n \end{pmatrix}
= \begin{pmatrix} 1 &2 &3 &4 &5 \\ 1 &9 &22 &25 &28 \end{pmatrix}.
$$

\subsection{Inversions of $n$-periodic permutations}

Let $w\in W$ be an $n$-periodic permutation.  
An \emph{inversion of $w$} is 
$$\hbox{$(j,k)$\quad with\quad $j<k$ and $w(j)>w(k)$.}
$$
If $(j,k)$ is an
inversion of $w$ then $(j+\ell n, k+\ell n)$ is an inversion of $w$ for $\ell\in \ZZ$ and
so it is sensible to assume $j\in \{1,\ldots, n\}$ and define
$$
\mathrm{Inv}(w) = \{ (j,k)\ |\  \hbox{$j\in \{1, \ldots, n\}$, $k\in \ZZ$, $j<k$  and $w(j)>w(k)$} \}.
$$
The number of elements of $\mathrm{Inv}(w)$,
$$
\ell(w) = \#\mathrm{Inv}(w), \quad \hbox{is the \emph{length of $w$}.}
$$
For notational convenience when working with reduced words, let 
$s_\pi = \pi$.  Then
$$\ell(s_\pi) = \ell(\pi) = 0
\quad\hbox{and}\qquad \ell(s_i) = 1\ \ \hbox{for $i\in \{1, \ldots, n-1\}$.}
$$
Let $w\in W$.  A \emph{reduced word for $w$} is an expression of $w$ as a product
of $s_1, \ldots, s_{n-1}$ and $s_\pi$,
$$w= s_{i_1}\ldots s_{i_\ell}
\quad\hbox{with $i_1, \ldots, i_\ell\in \{1, \ldots, n-1, \pi\}$}
\quad\hbox{such that}\qquad \ell(w) = \ell(s_{i_1})+\cdots+\ell(s_{i_\ell}).$$

\subsection{Affine coroots and the root sequence of a reduced word}\label{affcorootssection}

Let $\fa_\ZZ$ be the set of $\ZZ$-linear combinations of symbols $\varepsilon_1^\vee,\ldots, \varepsilon_n^\vee, K$.  The \emph{affine coroots} are 
$$\varepsilon_i^\vee- \varepsilon_j^\vee + \ell K
\qquad\hbox{with $i,j\in \{1, \ldots, n\}$ and $i\ne j$ and $\ell\in \ZZ$}
$$
(in the context of the corresponding affine Lie algebra the symbol $K$ is the central element).
The \emph{shift} and \emph{height} of an affine coroot are given by
\begin{equation}
\mathrm{sh}(\varepsilon_i^\vee - \varepsilon_j^\vee+\ell K) = -\ell
\qquad\hbox{and}\qquad
\mathrm{ht}(\varepsilon_i^\vee - \varepsilon_j^\vee + \ell K) = j-i.
\label{GLnshhtdefn}
\end{equation}
The affine coroot corresponding to an inversion
\begin{equation}
(i,k) = (i, j+\ell n)\quad \hbox{with $i,j\in \{1, \ldots, n\}$ and $\ell\in \ZZ$,\qquad is}\quad
\beta^\vee = \varepsilon_i^\vee - \varepsilon_j^\vee + \ell K.
\label{invtoaffineroots}
\end{equation}
Define a $\ZZ$-linear action of the affine Weyl group $W$ on $\fa_\ZZ$ by
\begin{equation}
\pi^{-1}\varepsilon_1^\vee = \varepsilon^\vee_n+K,
\quad
\pi^{-1}\varepsilon_i^\vee = \varepsilon^\vee_{i-1}\ \ \hbox{for $i\in \{2, \ldots, n\}$},
\label{WXonaZ}
\end{equation}
$$s_i\varepsilon_i^\vee = \varepsilon^\vee_{i+1}, 
\quad
s_i\varepsilon_{i+1}^\vee = \varepsilon^\vee_i,
\quad
s_i\varepsilon_j = \varepsilon^\vee_j
\ \ \hbox{if $j\in \{1,\ldots, n\}$ and $j\not\in\{i,i+1\}$.}
$$
If $\mu = (\mu_1, \ldots, \mu_n)\in \ZZ^n$ then
$t_\mu \varepsilon^\vee_i = \varepsilon^\vee_i-\mu_i K$.
This action comes from the double affine Hecke algebra results in Proposition \ref{intertwiners}
and equation \eqref{Ypasttau}.

Let
$$
\alpha_0^\vee = \varepsilon_n^\vee-\varepsilon_1^\vee+K,
\qquad\hbox{and}\qquad
\alpha_i^\vee = \varepsilon_i^\vee - \varepsilon_{i+1}^\vee\ \ \hbox{for $i\in \{1, \ldots, n-1\}$.}
$$
Let $w\in W$ and let $w=s_{i_1}\cdots s_{i_\ell}$ be a reduced word for $w$.
The \emph{root sequence} of the reduced word $w = s_{i_1}\cdots s_{i_\ell}$
(recall that $s_\pi = \pi$) is  
\begin{equation}
\hbox{the sequence $(\beta^\vee_k\ |\ \hbox{$k\in \{1, \ldots, \ell\}$ and $i_k\ne \pi$} \})$ 
given by}\qquad
\beta^\vee_k = s_{i_\ell}^{-1}\cdots s^{-1}_{i_{k+1}}\alpha^\vee_{i_k}.
\end{equation}
Then,
identifying inversions with affine coroots as in \eqref{invtoaffineroots},
\begin{equation}
\mathrm{Inv}(w)= \{ \beta^\vee_k\ |\ \hbox{$k\in \{1, \ldots, \ell\}$ and $k\ne \pi$} \}
\label{invsetascoroots}
\end{equation}
(see \cite[(2.2.9)]{Mac03} or \cite[Ch.\ VI \S 1 no.\ 6 Cor.\ 2]{Bou}).

\subsection{The element $u_\mu$ in the affine Weyl group}

Define an action of $W$ on $\ZZ^n$ by
\begin{align}
\pi(\mu_1, \ldots, \mu_n) &= (\mu_n+1, \mu_1, \ldots, \mu_{n-1})
\qquad\hbox{and} 
\label{WXonZn} \\
s_i(\mu_1, \ldots, \mu_n) 
&= (\mu_1, \ldots, \mu_{i-1}, \mu_{i+1}, \mu_i, \mu_{i+2}, \ldots, \mu_n),
\quad\hbox{for $i\in \{1, \ldots, n\}$.}
\nonumber
\end{align}
Let $u_\mu$ be the minimal length element of $W$ such that
\begin{equation}
u_\mu(0, 0, \ldots, 0) = (\mu_1, \ldots, \mu_n)
\qquad\hbox{and define\quad $v_\mu\in S_n$ by}
\quad u_\mu = t_\mu v_\mu^{-1},
\label{umudefn}
\end{equation}
where $t_\mu\in W$ is as defined in \eqref{tmudefn}.
As noted in  \cite[(2.4.3)]{Mac03}, 
$u_\mu$ is the minimal length element of the coset $t_\mu S_n$ in $W$
and the choice of the notation $u_\mu$ and $v_\mu$ for these elements follows 
that lead.  
Let $\lambda= ( \lambda_1\ge \cdots \ge \lambda_n)$ be the decreasing rearrangement of $\mu$
and let
\begin{equation}
\hbox{$z_\mu\in S_n$ \quad be minimal length such that $\mu = z_\mu\lambda$.}
\label{zmudefn}
\end{equation}


The following result is the translation of \cite[(2.4.1)-(2.4.5) and (2.4.14)(i) and (2.4.12)]{Mac03} 
to our current setting.

\begin{prop} \label{wmuexplicit}
Let $\mu= (\mu_1, \ldots, \mu_n) \in \ZZ_{\ge 0}^n$.\ 
Let $u_\mu$, $v_\mu$, $\lambda$ and $z_\mu$  be as defined in \eqref{umudefn}
and \eqref{zmudefn}.
\item[(a)] $v_\mu$ is the minimal length element of $S_n$ such that
$v_\mu \mu$ is (weakly) increasing.
\item[(b)] The permutation $v_\mu\colon \{1, \ldots, n\} \to \{1, \ldots, n\}$ is given by
\begin{equation*}
v_\mu(i) = 1+
\#\{i'\in \{1, \ldots, i-1\} \ |\ \mu_{i'}\le \mu_i \}  
+ \#\{i' \in \{i+1, \ldots, n\} \ |\ \mu_{i'}< \mu_i \},
\end{equation*}
\item[(c)]  The $n$-periodic permutations $u_\mu\colon \ZZ\to \ZZ$ and
$u^{-1}_\mu\colon \ZZ\to \ZZ$ 
are given by
$$u_\mu(i) = v^{-1}_\mu(i) + n\mu_i
\qquad\hbox{and}\qquad
u_\mu^{-1}(i)  = v_\mu(i)-n\mu_{v_\mu(i)}
\qquad\hbox{for $i\in \{1, \ldots, n\}$.}
$$
\item[(d)]  Let $\lambda$ 
be the decreasing rearrangement of $\mu$.
The lengths of $t_\mu$, $u_\mu$ and $v_\mu$ are given by
$$
\ell(t_\mu) = \ell(t_\lambda) = \sum_{i,j\in \{1, \ldots, n\}\atop i<j} \lambda_i-\lambda_j, 
\quad
\ell(v_\mu) = \#\{ i<j\ |\ \mu_i>\mu_j\}
\quad\hbox{and}\quad
\ell(u_\mu) = \ell(t_\mu)-\ell(v_\mu).
$$
\item[(e)]  
Let $i\in \{1, \ldots, n-1\}$. 
If $\mu_i\ne \mu_{i+1}$ so that $s_i\mu \ne \mu$ then
$u_{s_i\mu}=s_iu_\mu$ and
$v_{s_i\mu}=v_\mu s_i$ and 
\begin{align*}
\ell(u_{s_i\mu}) = \begin{cases}
\ell(u_\mu)+1, &\hbox{if $\mu_i > \mu_{i+1}$,} \\
\ell(u_\mu)-1, &\hbox{if $\mu_i < \mu_{i+1}$,} \\
\end{cases}
\quad\hbox{and}\quad
\ell(v_{s_i\mu}) = \begin{cases}
\ell(v_\mu)-1, &\hbox{if $\mu_i > \mu_{i+1}$,} \\
\ell(v_\mu)+1, &\hbox{if $\mu_i < \mu_{i+1}$,} \\
\end{cases}
\end{align*}
\item[(f)]  
With $\pi$ as in \eqref{pidefn}, then
$u_{\pi \mu} = \pi u_\mu$ and
$\ell(u_{\pi \mu}) = \ell(u_\mu)$ and
$$\ell(v_\mu)-\ell(v_{\pi\mu}) =(n-1)-2(v_\mu(n)-1).
$$
\end{prop}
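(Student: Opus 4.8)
The plan is to prove the three assertions in sequence, obtaining the length identity from the first two together with parts (d) and (b). \textbf{First} I would show $u_{\pi\mu}=\pi u_\mu$. Since $\pi(i)=i+1$, for any $w\in W$ one has $(\pi w)(j)=w(j)+1$, so $(j,k)$ is an inversion of $\pi w$ exactly when it is an inversion of $w$; hence $\mathrm{Inv}(\pi w)=\mathrm{Inv}(w)$ and $\ell(\pi w)=\ell(w)$ for all $w\in W$. The stabilizer of $(0,\dots,0)$ under the $W$-action of \eqref{WXonZn} is $S_n$ (as $t_\nu v\cdot(0,\dots,0)=\nu$), so $C:=\{w\in W : w(0,\dots,0)=\pi\mu\}$ is a single left coset of $S_n$ and therefore has a unique element of minimal length, namely $u_{\pi\mu}$. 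Because the action is a left action, $(\pi u_\mu)(0,\dots,0)=\pi(\mu)=\pi\mu$, so $\pi u_\mu\in C$; moreover for any $w\in C$ the element $\pi^{-1}w$ sends $(0,\dots,0)$ to $\mu$, so $\ell(w)=\ell(\pi^{-1}w)\ge \ell(u_\mu)=\ell(\pi u_\mu)$ by minimality of $u_\mu$. Thus $\pi u_\mu$ is the shortest element of $C$, giving $u_{\pi\mu}=\pi u_\mu$ and $\ell(u_{\pi\mu})=\ell(u_\mu)$.

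\textbf{Next} I would pass from the $u$'s to translations. Applying part (d) in the forms $\ell(u_\mu)=\ell(t_\mu)-\ell(v_\mu)$ and $\ell(u_{\pi\mu})=\ell(t_{\pi\mu})-\ell(v_{\pi\mu})$ (valid since $\pi\mu\in\ZZ_{\ge0}^n$) and using $\ell(u_{\pi\mu})=\ell(u_\mu)$ gives
$$\ell(v_\mu)-\ell(v_{\pi\mu})=\ell(t_\mu)-\ell(t_{\pi\mu}).$$
Part (d) also yields $\ell(t_\nu)=\sum_{i<j}|\nu_i-\nu_j|$ for any $\nu$ (the value depends only on the multiset of entries, so one may pass to the decreasing rearrangement $\lambda$, where it equals $\sum_{i<j}(\lambda_i-\lambda_j)$). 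Since $\pi\mu=(\mu_n+1,\mu_1,\dots,\mu_{n-1})$ and $\mu=(\mu_1,\dots,\mu_{n-1},\mu_n)$ agree on the entries in positions $2,\dots,n$ versus $1,\dots,n-1$, all pairs not involving $\mu_n$ (equivalently, the leading entry $\mu_n+1$ of $\pi\mu$) cancel, leaving
$$\ell(t_\mu)-\ell(t_{\pi\mu})=\sum_{i=1}^{n-1}\bigl(|\mu_i-\mu_n|-|\mu_i-(\mu_n+1)|\bigr).$$
A one-line sign check (the $i$-th summand is $+1$ if $\mu_i>\mu_n$ and $-1$ if $\mu_i\le\mu_n$, using that the $\mu_i$ are integers) gives $\ell(t_\mu)-\ell(t_{\pi\mu})=2m-(n-1)$, where $m=\#\{\,i\in\{1,\dots,n-1\}:\mu_i>\mu_n\,\}$.

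\textbf{Finally}, part (b) evaluated at $i=n$ reads $v_\mu(n)=1+\#\{\,i\in\{1,\dots,n-1\}:\mu_i\le\mu_n\,\}=1+(n-1-m)=n-m$, so $m=n-v_\mu(n)$ and
$$\ell(v_\mu)-\ell(v_{\pi\mu})=2\bigl(n-v_\mu(n)\bigr)-(n-1)=(n-1)-2\bigl(v_\mu(n)-1\bigr),$$
as claimed. The argument is bookkeeping resting on parts (b) and (d); the one step needing genuine care is the minimality argument of the first paragraph — using that $\pi$ is a length-zero element of $W$ whose action on $\ZZ^n$ is invertible, so that the fiber over $\pi\mu$ of the orbit map $W\to\ZZ^n$ is a single left $S_n$-coset with a unique shortest element — while the computational heart is the sign analysis, where the $+1$ shift in the leading coordinate of $\pi\mu$ is exactly what breaks the symmetry and produces $2m-(n-1)$.
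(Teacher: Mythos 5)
Your proposal addresses only part (f) of the proposition; parts (a)--(e) are taken as inputs rather than proved (you invoke (b) for the value of $v_\mu(n)$ and (d) for $\ell(t_\nu)=\sum_{i<j}|\nu_i-\nu_j|$ and $\ell(u_\nu)=\ell(t_\nu)-\ell(v_\nu)$). Since the statement is the full six-part proposition, that is a real gap in coverage, though not a logical one: the paper also establishes (d) and (b) before (f), so your dependencies are consistent with its ordering.

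Within that scope your argument is correct, and for the identity $\ell(v_\mu)-\ell(v_{\pi\mu})=(n-1)-2(v_\mu(n)-1)$ it takes a genuinely different route from the paper. The paper works entirely with the permutations $v$: writing $\gamma=(\mu_1,\ldots,\mu_{n-1})$, it uses $\ell(v_\nu)=\#\{i<j\ |\ \nu_i>\nu_j\}$ to split off the inversions involving the last entry $\mu_n$ of $\mu$ and the first entry $\mu_n+1$ of $\pi\mu$, getting $\ell(v_\mu)=\ell(v_\gamma)+\#\{i\ |\ \mu_i>\mu_n\}$ and $\ell(v_{\pi\mu})=\ell(v_\gamma)+\#\{i\ |\ \mu_i<\mu_n+1\}$, then subtracts. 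You instead route through the translations: from $\ell(u_{\pi\mu})=\ell(u_\mu)$ and $\ell(v)=\ell(t)-\ell(u)$ you reduce to $\ell(t_\mu)-\ell(t_{\pi\mu})$, which you evaluate by the telescoping sum $\sum_i\bigl(|\mu_i-\mu_n|-|\mu_i-(\mu_n+1)|\bigr)=2m-(n-1)$. Both computations are sound and of comparable length; yours has the merit of making explicit the facts $\mathrm{Inv}(\pi w)=\mathrm{Inv}(w)$ and $u_{\pi\mu}=\pi u_\mu$ (via the coset $t_{\pi\mu}S_n$ and minimality), which the paper's proof of (f) uses silently, while the paper's version is more self-contained in that it needs only the inversion-count formula for $v_\nu$ rather than the full length decomposition of (d). To make the submission a complete proof of the proposition you would still need to supply the inversion analysis of $\mathrm{Inv}(t_\mu v)$ that underlies (a), (d) and (e), and the counting argument for (b).
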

\begin{proof}  
(c)  The first formula follows from $u_\mu = t_\mu v^{-1}_\mu$ and \eqref{tmudefn}.  
To verify the second formula:
$$u_\mu^{-1}u_\mu(i) = u_\mu^{-1}(v^{-1}_\mu(i)+n\mu_i)
=u_\mu^{-1}(v^{-1}_\mu(i))+ n\mu_i 
= v_\mu(v^{-1}_\mu(i))-n\mu_{v_\mu v^{-1}_\mu(i)}+n\mu_i = i.
$$
(d)  From the definition of $t_\mu$ and $\mathrm{Inv}(w)$,
$$
\mathrm{Inv}(t_\mu) = 
\Big(\bigcup_{i<j\atop \mu_i\ge \mu_j} \{ (i,j), (i,j+n), \ldots, (i,j+n(\mu_j-\mu_i-1))\}\Big)
\cup
\Big(\bigcup_{i<j\atop \mu_j<\mu_i} \{ (j,i+n), \ldots, (j,i+n(\mu_i-\mu_j))\}\Big)
$$
and so
$\displaystyle{\ell(t_\mu) = \#\mathrm{Inv}(t_\mu) = \sum_{i<j} \vert \mu_i-\mu_j\vert}$, 
which gives the first statement.  More generally,
\begin{align*}
\mathrm{Inv}(t_\mu v) &= 
\Big(\bigcup_{i<j, v(i)<v(j)\atop \mu_{v(i)}\ge \mu_{v(j)}} 
\{ (i,j), (i,j+n), \ldots, (i,j+n(\mu_j-\mu_i-1))\}\Big) \\
&\qquad
\cup
\Big(\bigcup_{i<j, v(i)>v(j)\atop \mu_{v(i)} \ge \mu_{v(j)} } 
\{ (i,j), (i,j+n), \ldots, (i,j+n(\mu_j-\mu_i))\}\Big) \\
&\qquad
\cup
\Big(\bigcup_{i<j, v(i)<v(j) \atop \mu_{v(i)}<\mu_{v(j)} } \{ ((j,i+n), \ldots, (j,i+n(\mu_i-\mu_j))\}\Big) \\
&\qquad
\cup
\Big(\bigcup_{i<j, v(i)>v(j) \atop \mu_{v(i)} < \mu_{v(j)} } \{ ((j,i+n), \ldots, (j,i+n(\mu_i-\mu_j-1))\}\Big)
\end{align*}
The length of $t_\mu v$ is $\ell(t_\mu v) = \#\mathrm{Inv}(t_\mu v)$.  
Thus the minimal length element of the coset $t_\mu S_n$ is the element
$t_\mu v^{-1}_\mu$ where, if $i<j$ then $v^{-1}_\mu(i)>v^{-1}_\mu(j)$ 
if $\mu_{v^{-1}_\mu(i)} < \mu_{v^{-1}_\mu(j)}$ and
$v^{-1}_\mu(i)<v^{-1}_\mu(j)$ if $\mu_{v^{-1}_\mu(i)} \ge \mu_{v^{-1}_\mu(j)}$.
Thus $v_\mu \mu = v_\mu (\mu_1, \ldots, \mu_n) = (\mu_{v^{-1}_\mu(1)}, \ldots, \mu_{v^{-1}_\mu(n)})$
is in weakly increasing order and $\ell(t_\mu) = \ell(u_\mu) + \ell(v_\mu)$.

\smallskip\noindent
(a) and (e): These now follow from the last line of the proof of (d).

\smallskip\noindent
(b) In order for $v_\mu$ to rearrange $\mu$ into increasng order $v_\mu$ must move
the $i$th part of $\mu$  to the position
just to the right of the number of parts of $\mu$ which are less than $\mu_i$,
or equal to $\mu_i$ and to the left of $\mu_i$.  

\smallskip\noindent
(f)
Write $\gamma = (\mu_1, \ldots, \mu_{n-1})$.  Then
\begin{align*}
\ell(v_\mu) &= \ell(v_\gamma) + \#\{ i\in \{1, \ldots, n-1\}\ |\  \mu_i>\mu_n\}
\qquad\hbox{and} \\
\ell(v_{\pi \mu}) &= \ell(v_\gamma) + \#\{ i\in \{1, \ldots, n-1\}\ |\  \mu_i<\mu_n+1\},
\qquad\hbox{giving}
\end{align*}
\begin{align*}
\ell(v_\mu)-\ell(v_{\pi\mu}) 
&= \#\{ i\in \{1, \ldots, n-1\}\ |\  \mu_i>\mu_n\} - \#\{ i\in \{1, \ldots, n-1\}\ |\  \mu_i<\mu_n+1\} \\
&= (n-1)- \#\{ i\in \{1, \ldots, n-1\}\ |\  \mu_i \le \mu_n\} - \#\{ i\in \{1, \ldots, n-1\}\ |\  \mu_i\le \mu_n\}  \\
&=(n-1)-2(v_\mu(n)-1),
\end{align*}
where the third equality follows from the description of $v_\mu(n)$ in 
(b).
%
\end{proof}

\subsection{The box-greedy reduced word for $u_\mu$}

Let $\mu = (\mu_1, \ldots, \mu_n)\in \ZZ_{\ge 0}^n$ and let $u_\mu$ be as defined in 
\eqref{umudefn}.
The \emph{box-greedy reduced word} for the element $u_\mu$ is 
the sequence $u^{ {}_\square}_\mu$ 
defined inductively by the conditions $u^\square_{(0,\ldots, 0)} = 1$ and
\begin{equation}
u^\square_{(0,\ldots, 0, 0, \mu_k,  \mu_{k+1}, \ldots, \mu_n)}
=s_{k-1}\cdots s_2s_1\pi u^\square_{(0,\ldots, 0, 0, \mu_{k+1}, \ldots, \mu_n, \mu_k-1)} 
\label{KSdefn}
\end{equation}
This is the reduced word for $u_\mu$ that is used implicitly in \cite{KS96}, \cite{Kn96}, \cite{Sa96}.
Under the action in \eqref{WXonZn}, the factor $s_{k-1}\cdots s_2s_1\pi$ which appears in  
\eqref{KSdefn} is an element of $W$ of minimal length which moves 
$(0,\ldots, 0, 0, \mu_k,  \mu_{k+1}, \ldots, \mu_n)$
to a composition with one less box.

\begin{prop} \label{KSredwd}
For a box $(i,j)\in dg(\mu)$ (i.e. $i\in \{1, \ldots, n\}$ and $j\in \{1, \ldots, \mu_i\}$)
define
\begin{align}
u_\mu(i,j) &= \#\{ i'\in \{1, \ldots, i-1\}\ |\ \mu_{i'}<j\le \mu_i \} 
+\#\{ i'\in \{i+1, \ldots, n\}\ |\ \mu_{i'} < j-1 < \mu_i \} )
\label{boxlength}
\\
R_\mu(i,j) &= \{ \varepsilon^\vee_{v_\mu(i)} - \varepsilon^\vee_1+(\mu_i-j+1)K,
\ldots, \varepsilon^\vee_{v_\mu(i)} - \varepsilon^\vee_{u_\mu(i,j)}+(\mu_i-j+1)K  \}
\label{boxinversions}
\end{align}
\item[(a)]
The box greedy reduced word for $u_\mu$ is 
$$u_\mu^\square = \prod_{(i,j)\in dg(\mu)} (s_{u_\mu(i,j)} \cdots s_1\pi)
$$
where the product is over the boxes of $\mu$ \emph{in increasing cylindrical wrapping order}.
\item[(b)]  The inversion set of $u_\mu$ is 
$$\mathrm{Inv}(u_\mu) = \bigcup_{(i,j)\in dg(\mu)} R_\mu(i,j).
\qquad\hbox{and}\qquad
\ell(u_\mu) = \sum_{(i,j)\in \mu} u_\mu(i,j).$$

\end{prop}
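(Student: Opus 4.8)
The natural approach is induction on $\mu$ using the inductive definition \eqref{KSdefn} of $u^\square_\mu$, together with the explicit combinatorial formulas of Proposition \ref{wmuexplicit}. The base case $\mu = (0,\ldots,0)$ is trivial: $u^\square_{(0,\ldots,0)}=1$, $dg(\mu)=\varnothing$, and $\mathrm{Inv}(u_\mu)=\varnothing$. For the inductive step, I would first reduce to the case $\mu = (0,\ldots,0,\mu_k,\mu_{k+1},\ldots,\mu_n)$ with $\mu_k>0$ (i.e.\ $k$ is the smallest index with $\mu_k\neq 0$), since this is the form in which \eqref{KSdefn} applies; the general case of arbitrary $\mu$ follows because any $\mu\in\ZZ_{\geq0}^n$ has this shape after possibly relabeling — but actually one should be careful: \eqref{KSdefn} only strips off the leading zeros and the first nonzero part, so the recursion on $\mu$ is really a recursion that cycles the parts via $\pi$ and decrements. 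So the cleaner statement is: prove (a) and (b) simultaneously by induction on $\ell(u_\mu) = \#dg(\mu)$ (equivalently $\sum_i\mu_i$), using \eqref{KSdefn} at each step.

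For the inductive step on (a): set $\nu = (0,\ldots,0,\mu_{k+1},\ldots,\mu_n,\mu_k-1)$, so that $u^\square_\mu = s_{k-1}\cdots s_1\pi\, u^\square_\nu$ and, by the remark after \eqref{KSdefn} together with Proposition \ref{wmuexplicit}(e)--(f), $u_\mu = s_{k-1}\cdots s_1\pi\, u_\nu$ with $\ell(u_\mu) = (k-1) + \ell(u_\nu)$, so this is a reduced factorization. By the inductive hypothesis $u^\square_\nu = \prod_{(i,j)\in dg(\nu)}(s_{u_\nu(i,j)}\cdots s_1\pi)$ over boxes of $\nu$ in increasing cylindrical order. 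The boxes of $\mu$ are exactly the box $(k,1)$ (the new first box, with the smallest cylindrical coordinate in $dg(\mu)$) together with the boxes of $\nu$ shifted by the action of $\pi$ on positions and a decrement; the prepended factor $s_{k-1}\cdots s_1\pi$ is precisely the factor the product formula assigns to box $(k,1)$, since I must check $u_\mu(k,1) = k-1$. From \eqref{boxlength} with $i=k$, $j=1$: the first count is $\#\{i'<k : \mu_{i'}<1\le\mu_k\}$, which is $k-1$ since $\mu_1=\cdots=\mu_{k-1}=0<1\le\mu_k$, and the second count $\#\{i'>k : \mu_{i'}<0<\mu_k\}$ is empty. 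So $u_\mu(k,1)=k-1$ as needed. The remaining check is bookkeeping: that under the correspondence between boxes of $\mu\setminus\{(k,1)\}$ and boxes of $\nu$, the statistic $u_\mu(i,j)$ for a box of $\mu$ agrees with $u_\nu$ of the corresponding box of $\nu$. This is a direct comparison of the defining counts in \eqref{boxlength} for $\mu$ versus $\nu$, tracking how the cyclic relabeling of parts and the decrement of $\mu_k$ affect the inequalities; I expect this to be the most delicate part.

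For (b): the equation $\ell(u_\mu)=\sum_{(i,j)}u_\mu(i,j)$ follows immediately once (a) is established, since each factor $s_{u_\mu(i,j)}\cdots s_1\pi$ contributes exactly $u_\mu(i,j)$ to the length (the $\pi$ contributes $0$) and the word is reduced. For the inversion set, use \eqref{invsetascoroots}: the inversions of $u_\mu$ are the root sequence of the reduced word $u^\square_\mu$. Computing the root sequence of the factorized word in (a), the block $(s_{u_\mu(i,j)}\cdots s_1\pi)$ associated to box $(i,j)$ contributes $u_\mu(i,j)$ affine coroots, and after applying the inverse action of all the later factors one should recover exactly the set $R_\mu(i,j)$ from \eqref{boxinversions}. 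Here the key input is the $W$-action on $\fa_\ZZ$ from \eqref{WXonaZ}: I would verify that applying $\pi^{-1}$ and the $s_i^{-1}$ coming from later blocks sends the ``local'' coroots $\alpha^\vee_1,\ldots,\alpha^\vee_{u_\mu(i,j)}$ of block $(i,j)$ to $\varepsilon^\vee_{v_\mu(i)}-\varepsilon^\vee_1+(\mu_i-j+1)K,\ldots,\varepsilon^\vee_{v_\mu(i)}-\varepsilon^\vee_{u_\mu(i,j)}+(\mu_i-j+1)K$. The cleanest route is again induction: peeling off the first block $s_{k-1}\cdots s_1\pi$, its coroots are $s_{k-1}^{-1}\cdots s_1^{-1}\alpha^\vee_{i_m}$ pushed through $u_\nu^{-1}$-conjugation — but actually it is more convenient to peel off the \emph{last} box, or to use the alternative fact that $\mathrm{Inv}(u_\mu) = \mathrm{Inv}(w_1) \sqcup w_1\mathrm{Inv}(w_2)$ for a length-additive product $u_\mu = w_1 w_2$, applied with $w_1 = s_{k-1}\cdots s_1\pi$ and $w_2 = u_\nu$, together with $u_\mu = t_\mu v_\mu^{-1}$ and the explicit $\mathrm{Inv}(t_\mu v)$ computed in the proof of Proposition \ref{wmuexplicit}(d). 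In fact the slickest proof of (b) may bypass (a) entirely: directly compute $\mathrm{Inv}(u_\mu) = \mathrm{Inv}(t_\mu v_\mu^{-1})$ from the formula for $\mathrm{Inv}(t_\mu v)$ already derived, convert pairs $(i,k)$ to affine coroots via \eqref{invtoaffineroots}, and match the result term-by-term against $\bigcup_{(i,j)}R_\mu(i,j)$ using the formula for $v_\mu$ in Proposition \ref{wmuexplicit}(b). I expect the main obstacle to be the index juggling in either approach — reconciling the matrix-style row/column indices $(i,j)$ of boxes with the one-line/two-line conventions for $n$-periodic permutations, and getting the shift $\mu_i-j+1$ and the endpoints $1,\ldots,u_\mu(i,j)$ exactly right — rather than any conceptual difficulty.
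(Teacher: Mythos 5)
Your proposal follows essentially the same route as the paper: induction via the recursion \eqref{KSdefn} with $\nu=(0,\ldots,0,\mu_{k+1},\ldots,\mu_n,\mu_k-1)$, the check $u_\mu(k,1)=k-1$, the bookkeeping identities $u_\mu(i,j)=u_\nu(i-1,j)$ and $u_\mu(k,j)=u_\nu(n,j-1)$ (and likewise for $v_\mu$, hence for $R_\mu$), and an explicit computation of the new block's contribution $R_\mu(k,1)$ by pushing $\alpha^\vee_1,\ldots,s_1\cdots s_{k-2}\alpha^\vee_{k-1}$ through $u_\nu^{-1}\pi^{-1}$ using the action \eqref{WXonaZ}. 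The only caveat is the direction of your inversion-set decomposition: in the paper's convention (root sequence $\beta^\vee_k=s_{i_\ell}^{-1}\cdots s_{i_{k+1}}^{-1}\alpha^\vee_{i_k}$) a length-additive product $u_\mu=w_1u_\nu$ gives $\mathrm{Inv}(u_\mu)=\mathrm{Inv}(u_\nu)\sqcup u_\nu^{-1}\mathrm{Inv}(w_1)$, not $\mathrm{Inv}(w_1)\sqcup w_1\mathrm{Inv}(u_\nu)$, which is exactly why the untouched sets $R_\nu(i,j)$ reappear verbatim as the $R_\mu$ of the corresponding boxes.
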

\begin{proof} 
Let $\mu = (0, \ldots, 0, \mu_k, \ldots, \mu_n)$ and let $\nu = \pi^{-1} s_1 s_2\cdots s_{k-1}\mu
= (0, \ldots, 0, \mu_{k+1}, \ldots, \mu_n, \mu_k-1)$.  
From the definition of $u_\mu(i,j)$ in \eqref{boxlength}, $u_\mu(k,1) = k-1$,
\begin{align*}
u_\mu(i,j) &= u_\nu(i-1,j) \ \hbox{for $i\in \{k+1, \ldots, n\}$,} 
\quad\hbox{and}\quad
u_\mu(k,j) = u_\nu(n,j-1), \ \hbox{if $j\in \{2, \ldots, \mu_k\}$.}
\end{align*}
which already establishes (a).
Then, using \eqref{wmuexplicit} gives $v_\mu(i) = i$ for $i\in \{1, \ldots, k-1\}$,
\begin{align*}
v_\mu(i) = v_\nu(i-1)\ \hbox{for $i\in \{k+1, \ldots, n\}$}
\qquad\hbox{and}\qquad 
v_\mu(k) = v_\nu(n).
\end{align*}
These expressions for $u_\mu(i,j)$ and $v_\mu(i)$ in terms of $u_\nu(i,j)$ and $v_\nu(i)$
establish that
\begin{align*}
R_\mu(i,j) &= R_\nu(i-1,j), \qquad\hbox{if $i\ne k$, and} \\
R_\mu(k,j) &= R_\nu(n,j-1), \qquad\hbox{if $j\in \{2, \ldots, \mu_k\}$.}
\end{align*}
It remains to compute $R_\mu(k,1)$.
Since $u_\nu^{-1}\varepsilon_i^\vee = v_\nu t^{-1}_\nu \varepsilon_i^\vee
= \varepsilon^\vee_{v_\nu(i)-n\nu_i} = \varepsilon^\vee_{v_\nu(i)} + \nu_i K$ then
\begin{align*}
R_\mu(k,1) 
&=\{u_\nu^{-1}\pi^{-1}  \alpha_1^\vee, \ldots, 
u_\nu^{-1}\pi^{-1} s_1s_2\cdots s_{k-2}\alpha_{k-1}^\vee\} \\
&=\{u_\nu^{-1}\pi^{-1} (\varepsilon^\vee_1-\varepsilon^\vee_2), \ldots, 
u_\nu^{-1}\pi^{-1} s_1s_2\cdots s_{k-2}(\varepsilon^\vee_{k-1} - \varepsilon^\vee_k) \} \\
&=\{ u_\nu^{-1} ((\varepsilon^\vee_n+K)-\varepsilon^\vee_1), \ldots, 
u_\nu^{-1}((\varepsilon^\vee_n+K)-\varepsilon^\vee_{k-1}) \} \\
&=\{ (\varepsilon^\vee_{v_\nu(n)}+\nu_nK + K) - (\varepsilon^\vee_1+\nu_1 K), \ldots, 
(\varepsilon^\vee_{v_\nu(n)} + \nu_n K + K) - (\varepsilon^\vee_{k-1}+\nu_{k-1} K)\} \\
&=\{ \varepsilon^\vee_{v_\mu(k)} - \varepsilon^\vee_1 + (\mu_k-1)K + K, \ldots, 
\varepsilon^\vee_{v_\mu(k)} - \varepsilon^\vee_{k-1}+(\mu_k-1) K + K \},
\end{align*}
where the next to last equality uses $\nu_1=\cdots=\nu_{k-1}=0$ and $\nu_n = \mu_k-1$.
\end{proof}


\begin{remark} \label{rootstolegarm}
\textbf{Relating affine roots to $\#\mathrm{Nleg}_\mu(b)$ and 
$\#\mathrm{Narm}_\mu(b)$.}
In the derivation of box-by-box recursion for relative Macdonald polynomials (Proposition \ref{boxbybox}), 
the last root in each box in the expression of $\mathrm{Inv}(u_\mu)$ in Proposition \ref{KSredwd}(b)
gets picked out (this is the  $d_{-\beta^\vee_{j-1}}$ and $f_{-\beta^\vee_{j-1}}$ in the proof of
Lemma \ref{compressionlemma}).
More precisely, 
for $(i,j)\in dg(\mu)$, let $\beta^\vee_\mu(i,j)$ be the last element of $R_\mu(i,j)$ in \eqref{boxinversions}:
$$\beta^\vee_\mu(i,j) = \varepsilon^\vee_{v_\mu(i)} - \varepsilon^\vee_{u_\mu(i,j)}  + (\mu_i-j+1)K.$$
With the \emph{shift and height of an affine coroot} as defined in
\eqref{GLnshhtdefn},
then
\begin{align*}
\mathrm{sh}(-\beta^\vee_\mu(i,j)) 
= \#\mathrm{Nleg}_\mu(i,j)+1, \quad\hbox{and} \quad
\mathrm{ht}(-\beta^\vee_\mu(i,j)) 
=\#\mathrm{Narm}_\mu(i,j)+1,
\end{align*}
since, by \eqref{boxlength} and Proposition \ref{wmuexplicit}(b),
\begin{align*}
v_\mu(i) - u_\mu(i,j) 
&= 1+\#\{ i'\in \{1, \ldots, i-1\} \ |\ j \le \mu_{i'}\le  \mu_i \} 
+ \#\{ i'\in \{i+1, \ldots, n\}\ |\ j-1 \le \mu_{i'} < \mu_i\}  \\
&=\#\mathrm{Narm}_\mu(i,j)+1.
\end{align*}
\end{remark}


\section{Type $GL_n$ Macdonald polynomials}\label{GLnMacsection}

In this section we define the Macdonald polynomials $E_\mu$ and provide
explicit fomulas for all $E_\mu$ for $\mu$ with less than 3 boxes.  These
examples are helpful for getting a feel for what Macdonald polynomials actually look like.
Although we have hidden the double affine Hecke algebra (DAHA) from our exposition in this section,
Section 5 derives, from scratch, all the formulas for the operators $T_i$ and $Y_i$ and 
the Macdonald polynomials $E_\mu$ which are efficiently pulled out of a hat in this section.

\subsection{The polynomial representation and Cherednik-Dunkl operators}

For $\mu = (\mu_1, \ldots, \mu_n)\in \ZZ^n$ let
\begin{equation}
x^\mu = x_1^{\mu_1}\cdots x_n^{\mu_n}.
\label{xmonomnotation}
\end{equation}
The Laurent polynomial ring
$
\CC[x_1^{\pm1}, \ldots, x_n^{\pm1}]$
has basis $\{ x^\mu\ |\ \mu\in \ZZ^n\}$
and the polynomial ring
$
\CC[x_1, \ldots, x_n]$
has basis $\{ x^\mu\ |\ \mu\in \ZZ_{\ge 0}^n\}$, indexed by the set $\ZZ_{\ge 0}^n$
of compositions.
The symmetric group
$$\hbox{$S_n$ acts on $\CC[x_1^{\pm1}, \ldots, x_n^{\pm1}]$ and $\CC[x_1, \ldots, x_n]$ 
by permuting the variables $x_1, \ldots, x_n$.}
$$
The symmetric group $S_n$ acts on $\ZZ^n$ by permuting the positions of the entries
so that $wx^\mu = x^{w\mu}$ for $w\in S_n$ and $\mu\in \ZZ^n$.

Let $q, t^{\frac12}\in \CC^\times$.  Following the notation of \cite[Ch.\ VI (3.1)]{Mac},
let $T_{q^{-1},x_n}$ be the operator on $\CC[x_1^{\pm1}, \ldots, x_n^{\pm1}]$ given by
$$(T_{q^{-1},x_n} h)(x_1, \ldots, x_n) = h(x_1, \ldots, x_{n-1}, q^{-1}x_n).$$
For $i\in \{1, \ldots, n-1\}$ let
$s_i$ be the transposition which switches $i$ and $i+1$.
Define operators $T_1, \ldots, T_{n-1}$, $g$ and $g^\vee$ on 
$\CC[x_1^{\pm1}, \ldots, x_n^{\pm1}]$ by 
\begin{equation}
T_i =t^{-\frac12} \big( t -\frac{tx_i - x_{i+1}}{x_i-x_{i+1}}(1-s_i) \big),
\qquad
g = s_1s_2\cdots s_{n-1} T_{q^{-1}, x_n}, \qquad
g^\vee = x_1T_1\cdots T_{n-1}.
\label{DAHAonCX}
\end{equation}
In \S \ref{GLnpolyderiv} we give the derivation of these operators from the
type $GL_n$ double affine Hecke algebra (DAHA). Except for the
factor of $t^{-\frac12}$ this is the operator in \cite[(2.3)]{BW19}, which appears in the
form \eqref{origpolyop} in \cite[(7)]{HHL06}).
The \emph{Cherednik-Dunkl operators} are 
\begin{equation}
Y_1 = g T_{n-1}\cdots T_1,
\quad Y_2 = T_1^{-1}Y_1T_1^{-1}, \quad Y_3 = T_2^{-1}Y_2T_2^{-1}, \quad \ldots,\quad
Y_n= T_{n-1}^{-1}Y_{n-1}T_n^{-1}.
\label{GLnCDopsdefn}
\end{equation}

\subsection{Macdonald polynomials}

Let $g^\vee$, $T_i$ and $Y_i$ be as in \eqref{DAHAonCX} and \eqref{GLnCDopsdefn}
and define
\begin{equation}
\tau_\pi^\vee = g^\vee,
\quad\hbox{and}\quad
\tau_i^\vee = T_i + \frac{t^{-\frac12}(1-t)}{1-Y^{-1}_i Y_{i+1}}\quad\hbox{for $i\in \{1, \ldots, n-1\}$.}
\label{earlyintertwinerdefn}
\end{equation}
Using the action of $s_1, \ldots, s_n, \pi$ on $\ZZ^n$ given in \eqref{WXonZn},
the \emph{(nonsymmetric) Macdonald polynomials $E_\mu$}, for $\mu\in \ZZ^n$, 
are determined by $E_0 = 1$,
\begin{equation}
\begin{array}{l}
E_{\pi\mu} 
=  t^{\#\{ i\in \{1, \ldots, n\}\ |\  \mu_i>\mu_n\} -\frac12(n-1)}\, \tau_\pi^\vee E_\mu,
\quad\hbox{and} \\
\\
E_{s_i\mu} = t^{\frac12}\, \tau_i^\vee E_\mu
\quad \hbox{if $i\in \{1, \ldots, n-1\}$ and $\mu_{i+1}>\mu_i$.}
\end{array}
\label{recursion}
\end{equation}

\begin{remark}
The source of the strange coefficients in \eqref{recursion} is Proposition \ref{wmuexplicit} (e) and (f)
which gives that $-\frac12(\ell(v_{s_i\mu})-\ell(v_{\mu})) = \frac12$ and
$-\frac12(\ell(v_{\pi\mu})-\ell(v_{\mu})) = \frac12(n-1) - (v_\mu(n)-1) 
= \frac12(n-1) - \#\{i\in \{1, \ldots, n-1\}\ |\ \mu_i\le \mu_n\}$.  The role of these coefficients is
to force the coefficient of $x^\mu$ in $E_\mu$ to be $1$.
\end{remark}

The following theorem, the type $GL_n$ case of \cite[Theorem 4.1 and Proposition 4.2]{Che95},
shows that the $E_\mu$ are simultaneous eigenvectors of the Cherednik-Dunkl operators.
We provide a proof in Theorem \ref{KSGLnformularepeat} of this paper.

\begin{thm} \label{Macpolyconst}  Let $\mu\in \ZZ^n$ and let $v_\mu\in S_n$ be the permutation
that rearranges $\mu$ into (weakly) increasing order.  Then $E_\mu$ is the unique element of 
$\CC[x_1^{\pm1}, \ldots, x_n^{\pm1}]$ such that 
$$\hbox{if $i\in \{1, \ldots, n\}$\quad then\quad}
Y_i E_\mu = q^{-\mu_i}t^{-(v_\mu(i)-1)+\frac12(n-1)}E_\mu,
$$
and the coefficient of $x^\mu$ in $E_\mu$ is 1.
\end{thm}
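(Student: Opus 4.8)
The plan is to prove the two halves of the statement separately: \emph{existence}, that the Laurent polynomial $E_\mu$ produced by the recursion \eqref{recursion} has $x^\mu$-coefficient $1$ and satisfies $Y_iE_\mu = q^{-\mu_i}t^{-(v_\mu(i)-1)+\frac12(n-1)}E_\mu$ for every $i$, and \emph{uniqueness}, that no other element of $\CC[x_1^{\pm1},\ldots,x_n^{\pm1}]$ has both of these properties. The essential inputs, both to be supplied by the double affine Hecke algebra computations developed later in the paper, are: the operators $T_i$ and the intertwiners $\tau_i^\vee$ are invertible; and the \emph{intertwining relations} between the $Y_j$ and $\tau_i^\vee,\tau_\pi^\vee$, which are exactly those recorded by the $\ZZ$-linear action of $W$ on $\fa_\ZZ$ in \eqref{WXonaZ} under the correspondence $\varepsilon_j^\vee\leftrightarrow Y_j$ (with the symbol $K$ tracking powers of $q$); concretely, $\tau_i^\vee$ interchanges $Y_i$ and $Y_{i+1}$ and commutes with the other $Y_j$, while $\tau_\pi^\vee$ cyclically permutes the $Y_j$, introducing one power of $q$ as it crosses the affine wall.

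\emph{Existence.} First I would treat the base case $\mu=0$, $E_0=1$: from \eqref{DAHAonCX} one has $s_i\cdot 1 = 1$, hence $T_i\cdot 1 = t^{\frac12}\cdot 1$ and $g\cdot 1 = 1$, so \eqref{GLnCDopsdefn} gives $Y_1\cdot 1 = t^{\frac12(n-1)}\cdot 1$ and then, inductively, $Y_i\cdot 1 = t^{-(i-1)+\frac12(n-1)}\cdot 1$; since $v_0=\id$ this is the asserted eigenvalue. For the inductive step I would argue along a chain $0=\mu^{(0)},\mu^{(1)},\ldots,\mu^{(m)}=\mu$ obtained by traversing a reduced word of $u_\mu$, each step being $\mu^{(k+1)}=\pi\mu^{(k)}$ or $\mu^{(k+1)}=s_i\mu^{(k)}$; since $\tau_i^\vee$ and $\tau_\pi^\vee$ are invertible, \eqref{recursion} (read forwards when $\mu^{(k)}_{i+1}>\mu^{(k)}_i$, backwards otherwise) expresses $E_{\mu^{(k+1)}}$ as an explicit nonzero scalar times $\tau_i^\vee E_{\mu^{(k)}}$ or $\tau_\pi^\vee E_{\mu^{(k)}}$. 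Whenever $\tau_i^\vee$ is applied to an eigenvector $E_\nu$ with $\nu_i\ne\nu_{i+1}$, the eigenvalue of $Y_i^{-1}Y_{i+1}$ on $E_\nu$ is a nontrivial power of $q$, so the rational term $\frac{t^{-\frac12}(1-t)}{1-Y_i^{-1}Y_{i+1}}$ in \eqref{earlyintertwinerdefn} acts as a genuine scalar and $\tau_i^\vee E_\nu$ is again a Laurent polynomial. The intertwining relations then show that $\tau_i^\vee E_{\mu^{(k)}}$, resp.\ $\tau_\pi^\vee E_{\mu^{(k)}}$, is a $Y$-eigenvector whose eigenvalue tuple is obtained from that of $E_{\mu^{(k)}}$ by the transposition $(i,i+1)$, resp.\ by the cyclic shift together with an extra factor $q^{-1}$ in slot $1$; by Proposition~\ref{wmuexplicit}(e) and (f) this matches exactly the eigenvalue tuple asserted for $E_{\mu^{(k+1)}}$. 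Finally, the scalars $t^{\frac12}$ and $t^{\#\{i\,:\,\mu_i>\mu_n\}-\frac12(n-1)}$ in \eqref{recursion} are precisely those forcing the $x^{\mu^{(k+1)}}$-coefficient to be $1$: a short computation of the leading term of $\tau_i^\vee E_\nu$ and of $\tau_\pi^\vee E_\nu = g^\vee E_\nu = x_1T_1\cdots T_{n-1}E_\nu$ from \eqref{DAHAonCX}, as indicated in the Remark after \eqref{recursion}.

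\emph{Uniqueness.} Here I would first establish that each $Y_i$ acts on $\CC[x_1^{\pm1},\ldots,x_n^{\pm1}]$ triangularly with respect to the monomial basis $\{x^\nu\}$ and a suitable partial order (dominance refined by $\ell(v_\nu)$), with the coefficient of $x^\nu$ in $Y_ix^\nu$ equal to $q^{-\nu_i}t^{-(v_\nu(i)-1)+\frac12(n-1)}$. Granting this, any simultaneous eigenvector of $Y_1,\ldots,Y_n$ whose eigenvalue tuple equals that of $E_\mu$ is supported on monomials $x^\nu$ with $(q^{-\nu_1},\ldots,q^{-\nu_n})=(q^{-\mu_1},\ldots,q^{-\mu_n})$; for generic $q$ this forces $\nu=\mu$, so the joint eigenspace is one-dimensional and the normalization of the $x^\mu$-coefficient pins down a unique vector. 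Combined with the existence half, this is exactly the assertion of the theorem.

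\emph{Main obstacle.} The substance is entirely in the $q$- and $t$-bookkeeping: obtaining the intertwining relations $\tau_i^\vee Y_j$, $\tau_\pi^\vee Y_j$ with the correct power of $q$ picked up when $\pi$ crosses the affine wall, and proving the triangularity of the $Y_i$ with the stated diagonal entries. Both are genuine type $GL_n$ double affine Hecke algebra computations, and they are precisely what the later sections supply (via \eqref{WXonaZ} and the explicit operators \eqref{DAHAonCX}--\eqref{GLnCDopsdefn}). A secondary point requiring care is that \eqref{recursion} is a priori only a recipe, so its consistency --- independence of the chosen chain from $0$ to $\mu$ --- is not evident and is recovered only a posteriori from the uniqueness statement.
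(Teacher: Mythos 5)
Your proposal is correct in outline, and for the existence half it is essentially the paper's argument in step-by-step form: the paper (Proposition \ref{KSGLnformularepeat}) defines $E_\mu=t^{-\frac12\ell(v_\mu^{-1})}\tau^\vee_{u_\mu}\mathbf{1}$ and gets the eigenvalue in one shot from the global intertwining relation $Y_i\tau^\vee_w=\tau^\vee_wY_{w^{-1}(i)}$ of \eqref{Ypasttau} together with $Y_{j+n}=qY_j$ and \eqref{Yion1}, and gets the normalization by identifying the top term $X^{u_\mu}\mathbf{1}=X^\mu T_{v_\mu^{-1}}\mathbf{1}$; your induction along a chain $0\to\mu$ through a reduced word of $u_\mu$ unwinds exactly this, since \eqref{Ypasttau} is itself proved one generator at a time in Proposition \ref{intertwiners}. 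Two points of genuine divergence. First, you resolve the well-definedness of the recipe \eqref{recursion} a posteriori from uniqueness, whereas the paper gets it directly from the braid relations \eqref{taubraidrels} for the $\tau_i^\vee$; both work, but the paper's route does not need uniqueness at all for this. Second, and more substantively, your uniqueness argument via triangularity of the $Y_i$ on the monomial basis is \emph{not} in the paper: Proposition \ref{KSGLnformularepeat} proves only the eigenvalue equations and the normalization, so the word ``unique'' in the theorem is left to the citation of Cherednik. Your sketch of that step is the standard and correct one, but note that the triangularity of $Y_i$ with diagonal entry $q^{-\nu_i}t^{-(v_\nu(i)-1)+\frac12(n-1)}$ on $x^\nu$ is asserted rather than proved, and it is the one ingredient not supplied anywhere else in the paper, so a complete write-up would still owe that computation. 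A last small caveat: the inequality governing when \eqref{recursion} applies is stated inconsistently in the paper (compare \eqref{recursion} with Proposition \ref{Emusymmetries}(a) and the length statement in Proposition \ref{wmuexplicit}(e)); along a reduced word of $u_\mu$ every step is length-increasing, so the recursion is always read in the direction where no extra factor $(t^{\frac12}\tau_i^\vee)^2$ from \eqref{tausquared} intervenes, and your ``read forwards or backwards'' hedge should be replaced by the correct one-sided condition to keep the leading coefficient equal to $1$.
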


Let $\lambda = (\lambda_1\ge \cdots \ge \lambda_n)\in \ZZ^n$.  
\begin{equation}
\hbox{The \emph{symmetric Macdonald polynomial} $P_\lambda$ is}\qquad
P_\lambda = \sum_{\nu\in S_n\lambda} t^{\frac12\ell(z_\nu)}T_{z_\nu} E_\lambda,
\label{Pdefn}
\end{equation}
where the sum is over rearrangements $\nu$ of $\lambda$ and
$z_\nu\in S_n$ is minimal length such that $\nu = z_\nu\lambda$. \hfil\break
Let $\mu = (\mu_1, \ldots, \mu_n)$ and let $z\in S_n$. 
\begin{equation}
\hbox{The \emph{relative Macdonald polynomial} $E_\mu^z$  is}\qquad
E^z_\mu = t^{-\frac12(\ell(zv_\mu^{-1})-\ell(v_\mu^{-1}))}T_zE_\mu.
\label{Ezmudefn}
\end{equation}
These definitions follow \cite[Remarks after (6.8)]{Mac95},
\cite[(5.7.6), (5.7.7)]{Mac03} , \cite[Definition 4.4.2]{Fe11}, \cite[Definition 5]{Al16}
and \cite[(2.8)]{FMO16}
(Ferreira references private communication with Haglund).
In \cite{Al16}, the $E^z_\mu$ are called \emph{permuted basement Macdonald polynomials}.

\begin{remark} \label{Emusymmremark}
The following properties of the $E_\mu$ are proved in Proposition \ref{Emusymmetries}:
$$E_{(\mu_n+1, \mu_1, \ldots, \mu_{n-1})}
=q^{\mu_n}x_1E_\mu(x_2, \ldots, x_n, q^{-1}x_1),
$$
$$
E_{(\mu_1+1, \ldots, \mu_n+1)} = x_1\cdots x_n E_{(\mu_1, \ldots, \mu_n)},
\qquad
E_{(-\mu_n, \ldots, -\mu_1)}(x_1, \ldots, x_n;q,t) 
= E_\mu(x^{-1}_n, \ldots, x^{-1}_1;q,t).
$$
\end{remark}

\begin{remark}  In generalization of \eqref{Ezmudefn}, one could,
for any $\mu\in \ZZ^n$ and any $n$-periodic permutation $z\in W$,
define $E_\mu^z = (const) T_zE_\mu$,
where $(const)$ is a constant determined by requiring 
the coefficient of $x^{z\mu}$ in $E_\mu^z$ to be 1.  A more useful alternative might
be to define $E^z_\mu = X^z E_\mu = X^z \tau_{\mu}\mathbf{1}$ in the notation of
\cite[(2.26) and Theorem 2.2]{RY08}.
\end{remark}

%

\subsection{Explicit $E_\mu$ with less than three boxes}

The following explicit formulas for $E_\mu$ with 1 and 2 boxes already provide enough data
that one might have a chance at guessing the nonattacking fillings formula.

\begin{prop} \label{Emu1box}  Let $\varepsilon_i = (0,\ldots, 0, 1, 0, \ldots, 0)$ be the sequence
with $1$ in the $i$th component.
\item[(a)] 
If $i\in \{1, \ldots, n\}$ then
$$E_{\varepsilon_i} 
= x_i + \frac{(1-t)}{(1-qt^{n-(i-1)})} (x_{i-1}+\cdots + x_1).
$$
\item[(b)]  If $i\in \{1, \ldots, n\}$ then
 \begin{align*}
E_{2\varepsilon_i} &= x_i^2+ \Big(\frac{1-t}{1-q^2t^{n-(i-1)}}\Big) \sum_{k\in \{1, \ldots, i-1\}}  x_k^2 
+ \Big(\frac{1-t}{1-qt}\Big) q \sum_{\ell\in \{i+1,\ldots, n\}}  x_ix_\ell 
\\
&\qquad
+ \Big(\frac{1-t}{1-qt}\Big) \Big(1+  \Big(\frac{1-t}{1-q^2t^{n-(i-1)} }\Big) q \Big) 
\sum_{k \in\{1, \ldots, i-1\} } x_k x_i 
\\
&\qquad 
+ \Big(\frac{1-t}{1-qt}\Big) \Big(\frac{1-t}{1-q^2t^{n-(i-1)} }\Big)(1 +q)
\sum_{\{k, \ell\}\subseteq \{1, \ldots, i-1\}}  x_k x_\ell  \\
&\qquad
+\Big( \frac{1-t}{1-q^2t^{n-(i-1)} }\Big) \Big(\frac{1-t }{ 1-qt }\Big)q
\sum_{k\in \{1,\ldots, i-1\}} \sum_{\ell\in \{i+1, \ldots, n\}} x_kx_\ell,
\end{align*}
\item[(c)] If $j_1,j_2\in \{1, \ldots, n\}$ with $j_1<j_2$ then
\begin{align*}
E_{\varepsilon_{j_1}+\varepsilon_{j_2} }
&= x_{j_1}x_{j_2} 
+ \Big( \frac{1-t}{1-qt^{n-j_1} } \Big)  \sum_{k=1}^{j_1-1} x_k x_{j_2} 
+ \Big( \frac{1-t}{1-qt^{n-(j_2-2)} } \Big) \sum_{\ell=j_1+1}^{j_2-1} x_{j_1}x_\ell  \\
&\qquad
+ \Big( \frac{1-t}{1-qt^{n-(j_2-2)} }\Big) \Big(\frac{1-t}{1-qt^{n-j_1} }+t\Big) 
\sum_{k = 1}^{j_1-1} x_k x_{j_1} \\
&\qquad + \Big( \frac{1-t}{1-qt^{n-(j_2-2)} } \Big) \Big( \frac{1-t}{1-qt^{n-j_1} } \Big)
\sum_{k=1}^{j_1-1} \sum_{\ell=j_1+1}^{j_2-1} x_k x_\ell \\
&\qquad + \Big(\frac{1-t}{1-qt^{n-(j_2-2)} } \Big)
\Big(  \frac{1-t}{1-qt^{n-j_1} } \Big)(1+t)
\sum_{\{k,\ell\}\subseteq  \{1,\ldots, j_1-1\}}   x_k x_\ell 
\end{align*}
\end{prop}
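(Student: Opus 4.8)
The plan is to prove all three formulas by applying the intertwiner recursion \eqref{recursion} together with Theorem \ref{Macpolyconst}, starting from the known base cases $E_0 = 1$ and part (a), and to organize the computation so that the three statements bootstrap off each other. First I would establish (a): since $\varepsilon_i = \pi s_1 s_2 \cdots s_{i-1}(0,\ldots,0)$ (checking this from the action \eqref{WXonZn}), or more directly since $\varepsilon_i = s_{i-1}\cdots s_1 \varepsilon_1$ and $\varepsilon_1 = \pi(0,\ldots,0)$, one computes $E_{\varepsilon_1} = t^{\frac12(n-1)-\frac12(n-1)}\tau_\pi^\vee E_0 = g^\vee \cdot 1 = x_1 T_1 \cdots T_{n-1}\cdot 1 = x_1$ (each $T_j$ fixes $1$ since $t^{-\frac12}(t - \frac{tx_j-x_{j+1}}{x_j-x_{j+1}}(1-s_j))\cdot 1 = t^{-\frac12}(t - 0) = t^{\frac12}$... one must track the normalization carefully here; in fact $T_j\cdot 1 = t^{\frac12}\cdot 1$, so $g^\vee\cdot 1 = t^{\frac{n-1}{2}}x_1$, and the coefficient in \eqref{recursion} for the $\pi$-step with $\mu = 0$ is $t^{0 - \frac12(n-1)}$, giving $E_{\varepsilon_1} = x_1$). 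Then I would apply $E_{s_{i-1}\mu} = t^{\frac12}\tau_{i-1}^\vee E_\mu$ repeatedly, moving the box from row $1$ down to row $i$; at each step $\tau_j^\vee = T_j + \frac{t^{-\frac12}(1-t)}{1 - Y_j^{-1}Y_{j+1}}$, and the eigenvalue ratio $Y_j^{-1}Y_{j+1}$ on $E_\mu$ is $q^{\mu_j - \mu_{j+1}}t^{-(v_\mu(j+1) - v_\mu(j))}$ by Theorem \ref{Macpolyconst}. For a single box moving from column-0-row-$j$ to row-$j+1$ one has $\mu_j = 0$, $\mu_{j+1} = 1$, and $v_\mu$ is an explicit shift, so the denominator becomes $1 - q^{-1}t^{k}$ for the appropriate $k$; carrying this through the telescoping product yields the stated denominator $1 - qt^{n-(i-1)}$ and the sum $x_{i-1}+\cdots+x_1$. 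This is a finite, explicit induction on $i$.

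For (c), I would use the same strategy but starting from $E_{\varepsilon_{j_1}}$ (known from (a)) and building up $\varepsilon_{j_1} + \varepsilon_{j_2}$ by first applying $\tau_\pi^\vee$ to insert a new box in row $1$ — using the relation $E_{(\mu_n+1,\mu_1,\ldots,\mu_{n-1})} = q^{\mu_n}x_1 E_\mu(x_2,\ldots,x_n,q^{-1}x_1)$ from Remark \ref{Emusymmremark}, which converts the problem into a shift-of-variables computation on the already-known one-box polynomial — and then sliding both boxes into their correct rows via a sequence of $\tau_i^\vee$ moves. At each $s_i$-move I would record how each monomial and each rational coefficient transforms under $T_i$ and under the correction term; the correction term's denominator $1 - Y_i^{-1}Y_{i+1}$ evaluated against the two-box eigenvalue produces the two distinct denominators $1 - qt^{n-j_1}$ and $1 - qt^{n-(j_2-2)}$ that appear in the answer, and the various products of these (sometimes with an extra $+t$ or $(1+t)$ factor) arise from the ordinary $T_i$-action $t^{-\frac12}(t - \frac{tx_i-x_{i+1}}{x_i-x_{i+1}}(1-s_i))$ acting on monomials where $x_i$ and $x_{i+1}$ both appear with differing exponents. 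Part (b) is then the "diagonal" specialization: $2\varepsilon_i$ is obtained from $\varepsilon_i$ by a $\pi$-move (which raises the last entry) followed by $s$-moves, or more slickly by noting $2\varepsilon_i = \varepsilon_i + \varepsilon_i$ and running the two-box recursion with a repeated index, where the eigenvalue ratio contributes the $q^2$ in $1 - q^2 t^{n-(i-1)}$ because both boxes sit in the same row and the $Y$-eigenvalue involves $q^{-\mu_i} = q^{-2}$.

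The main obstacle will be bookkeeping, not conceptual difficulty: the recursion \eqref{recursion} is a completely determined algorithm, so each $E_\mu$ is computed by a fixed finite sequence of operator applications, but the number of monomials grows and the rational coefficients must be simplified (repeatedly using identities like $\frac{1-t}{1-qt^a} + t = \frac{1 - qt^{a+1}}{1-qt^a}$, which is exactly how the "$+t$" combines into the next denominator) to bring the output into the compact closed form stated. I would therefore structure the proof as: (i) prove (a) by induction on $i$; (ii) prove the auxiliary monomial-transformation lemmas for how $T_i$ and $\tau_i^\vee$ act on the specific monomials that occur; (iii) prove (c) by applying these, organizing monomials by which of the six "types" (indexed by position of indices relative to $j_1, j_2$) they fall into; (iv) deduce (b) either as a limiting/specialization case of the (c) computation or by a direct parallel induction, being careful that the coincidence $j_1 = j_2 = i$ changes the eigenvalue denominator from $1 - qt^{\cdots}$ to $1 - q^2 t^{\cdots}$ and introduces the $\sum x_i x_\ell$ term with $\ell > i$ that has no analogue in the strictly-increasing case. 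Alternatively — and this is probably cleaner — one can simply remark that (a), (b), (c) are all immediate specializations of the general nonattacking fillings formula (Theorem, part (b)) proved later in the paper, and present the explicit formulas here only as illustrative data with the honest forward-reference; but since the stated intent of this subsection is to provide guessing data \emph{before} the general theorem, I would give the self-contained recursion-based proof sketched above.
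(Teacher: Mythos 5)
Your proposal matches the paper's proof: all three parts are established by induction (on $i$ for (a) and (b), on $j_1$ for (c)) using the intertwiner recursion $E_{s_i\mu}=t^{\frac12}\tau_i^\vee E_\mu$ together with the eigenvalue from Theorem \ref{Macpolyconst}, with base cases $E_{\varepsilon_1}=x_1$ and, for (b) and (c), the cyclic identity $E_{(\mu_n+1,\mu_1,\ldots,\mu_{n-1})}=q^{\mu_n}x_1E_\mu(x_2,\ldots,x_n,q^{-1}x_1)$ applied to $E_{\varepsilon_n}$ and $E_{\varepsilon_{j_2-1}}$ respectively, exactly as you describe. The only quibble is an orientation slip in your eigenvalue computation: for $\mu=\varepsilon_j$ one has $\mu_j=1$ and $\mu_{j+1}=0$, so $Y_j^{-1}Y_{j+1}$ acts by $q^{+1}t^{n-j}$ and the correction term's denominator is $1-qt^{n-j}$ directly, not $1-q^{-1}t^{k}$ --- a bookkeeping detail that does not affect the strategy.
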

\begin{proof}  
Using the first identity in \eqref{DAHAonCX}, if $r\in \{1, \ldots, n\}$ then
\begin{equation}
t^{\frac12}T_i(x_r)
= \begin{cases}
x_{i+1}, &\hbox{if $r=i$,} \\
tx_i + (t-1)x_{i+1}, &\hbox{if $r=i+1$,}  \\
tx_r, &\hbox{otherwise.}
\end{cases}
\label{Tiondeg1}
\end{equation}
and, assuming $r,s\in \{1, \ldots, n\}$ with $r<s$ then
$$t^{\frac12}T_i(x_rx_s)
= \begin{cases}
x_rx_{i+1}, &\hbox{if $s=i$,} \\
tx_rx_i + (t-1)x_rx_{i+1}, &\hbox{if $s=i+1$ and $r<i$,} \\
tx_i x_{i+1}, &\hbox{if $r=i$ and $s=i+1$}, \\
x_{i+1}x_s, &\hbox{if $r=i$ and $s>i+1$,} \\
(t-1)x_{i+1}x_s + tx_ix_s, &\hbox{if $r=i+1$ and $s>i+1$,} \\
tx_rx_s, &\hbox{otherwise.}
\end{cases}
$$
and, if $r\in \{1,\ldots, n\}$ then
\begin{equation}
t^{\frac12}T_i(x_r^2)
= \begin{cases}
x_{i+1}^2+(1-t)x_ix_{i+1}, &\hbox{if $r=i$,} \\
tx_i^2+(t-1)x_{i+1}^2+(1-t)x_ix_{i+1}, &\hbox{if $r=i+1$,} \\
tx_r^2, &\hbox{otherwise.}
\end{cases}
\label{Tiondeg2}
\end{equation}
(a) The proof is by induction on $i$.  The base case $i=1$ is
$$E_{\varepsilon_1} 
= t^{-\frac12(n-1)}\tau_\pi^\vee \mathbf{1}
= t^{-\frac12(n-1)}X_1T_1 \cdots T_{n-1} \mathbf{1} = x_1.
$$
For the induction step (note that 
$Y_i^{-1}Y_{i+1}E_{\varepsilon_i} = q^{1-0}t^{n-i}E_{\varepsilon_i}$) and use
\begin{align*}
E_{\varepsilon_{i+1}} 
&= t^{\frac12}\tau_i^\vee E_{\varepsilon_i}
= \Big(t^{\frac12}T_i + \frac{1-t}{1-qt^{n-i}}\Big) E_{\varepsilon_i} 
\end{align*}

\noindent
(b) The proof is by induction on $i$.  Using part (a) and the first 
identity in Remark \ref{Emusymmremark} applied to $E_{\varepsilon_n}$,
$$
E_{2\varepsilon_1} 
= x_1^2 + \frac{1-t}{1-qt} q (x_1x_n+\cdots+x_1x_2).
$$
and this provides the base of the induction.  
Then use $Y_i^{-1}Y_{i+1}E_{2\varepsilon_i} = q^{2-0}t^{n-i}E_{2\varepsilon_i}$ and
\begin{align*}
&E_{2\varepsilon_{i+1}} 
= \Big(t^{\frac12}T_i + \frac{1-t}{1-Y_i^{-1}Y_{i+1}}  \Big) E_{2\varepsilon_i} 
= \Big(t^{\frac12}T_i + \frac{1-t}{1-q^2t^{n-i}} \Big) E_{2\varepsilon_i}.
\end{align*}

\smallskip\noindent
(c) The proof is by induction on $j_1$.  From part (a) and the first 
identity in Remark \ref{Emusymmremark} applied to $E_{\varepsilon_{j_2-1}}$,
\begin{align*}
E_{\varepsilon_1+\varepsilon_{j_2}} 
&= x_1x_{j_2} 
+ \Big( \frac{1-t}{1-qt^{n-(j_2-2)} }\Big)  (x_1x_{j_2-1} +\cdots +x_1x_3 + x_1x_2),
\end{align*}
and this provides the base of the induction.
Then use
\begin{align*}
E_{\varepsilon_{j_1}+\varepsilon_{j_2} }
&= 
\Big( t^{\frac12}T_{j_1-1} + \frac{1-t}{1-qt^{n-j_1} } \Big) 
E_{\varepsilon_{j_1-1}+\varepsilon_{j_2}}.
\end{align*}
\end{proof}


\section{Recursions for computing $E^z_\mu$}

In this section we derive the recursions which are used to produce expansions of
Macdonald polynomials in terms of monomials.  These computations are extensions of
the defining recursions given in \eqref{recursion}.
It will be helpful to record carefully the action of $t^{\frac12}\tau^\vee_i$ and $t^{\frac12}T_i$
on the Macdonald polynomials $E_\mu$ as follows.

Let $\mu\in \ZZ^n$ and, with notations as in Theorem \ref{Macpolyconst},  let
$$
\begin{array}{l}
a_\mu =q^{\mu_i-\mu_{i+1}}t^{v_\mu(i)-v_\mu(i+1)} , \\
a_{s_i\mu} =q^{\mu_{i+1}-\mu_i} t^{v_\mu(i+1)-v_\mu(i)} ,
\end{array}
\qquad\hbox{and}\qquad
D_\mu = \frac{(1-ta_\mu)(1-ta_{s_i\mu})}{(1-a_\mu)(1-a_{s_i\mu})}.
$$
Assume that $\mu_i>\mu_{i+1}$.
Using the identity $E_{s_i\nu} = t^{\frac12}\tau^\vee_iE_\nu$ if $\nu_{i+1}>\nu_i$ from
\eqref{recursion}, the eigenvalue from Theorem \ref{Macpolyconst},
and \eqref{tausquared} gives
\begin{equation}
\begin{array}{l}
Y_i^{-1}Y_{i+1} E_\mu = a_\mu E_\mu, \\
Y_i^{-1}Y_{i+1} E_{s_i\mu} = a_{s_i\mu}E_{s_i\mu},
\end{array}
\quad
\begin{array}{l}
t^{\frac12}\tau^\vee_i E_\mu = E_{s_i\mu}, \\
t^{\frac12}\tau^\vee_i E_{s_i\mu} = D_\mu E_\mu,
\end{array}
\quad\hbox{and}
\quad
\begin{array}{l}
t^{\frac12}T_i E_\mu = -\frac{1-t}{1-a_\mu}E_\mu + E_{s_i\mu}, \\
t^{\frac12}T_i E_{s_i\mu} = D_\mu E_\mu + \frac{1-t}{1-a_{s_i\mu}} E_{s_i\mu}.
\end{array}
\label{CXlambdaaction}
\end{equation}
Now assume that $\mu_i=\mu_{i+1}$.  Then
$v_\mu(i+1) = v_\mu(i)+1$ and $a_\mu = t^{-1}$ so that
\begin{equation}
Y_i^{-1}Y_{i+1} E_\mu = t^{-1} E_\mu, 
\qquad (t^{\frac12}\tau^\vee_i) E_\mu = 0,
\qquad\hbox{and}\qquad (t^{\frac12}T_i) E_\mu = t E_\mu.
\label{Tigivest}
\end{equation}

\subsection{Step by step recursion for computing $E^z_\mu$}

Proposition \ref{stepbystep}(a) is used to
reduce the number of boxes in $\mu$ and part (b)
is used to reduce the computation to decreasing $\mu$.  
Iterating these steps delivers a monomial expansion of $E^z_\mu$ as
a weighted sum of alcove walks $p$.
The permutation sequence $\vec z_p$ of the alcove walk which appears in 
\eqref{alcwalkpermseq} is the sequence $z_0,z_1, \ldots$ of permutations  which 
arise as superscripts of the  $E^{z_i}_\nu$ which occur in the intermediate applications 
of the step-by-step recursion to obtain the monomial expansion.

\begin{prop}  \label{stepbystep}
Let $\mu\in \ZZ^n$ and let $z\in S_n$. 
Let $v_\mu$ be the minimal length element of $S_n$ such that 
$v_\mu$ rearranges $\mu$ to be weakly increasing.
\item[(a)] If $\mu_1\ne 0$ then
$$E^z_\mu = x_{z(1)} E^{zc_n}_{(\mu_2, \ldots, \mu_n, \mu_1-1)},
\qquad\hbox{where $c_n = s_1\cdots s_{n-1}$ (an $n$-cycle in $S_n$).}
$$
\item[(b)]  Let $i\in \{1, \ldots, n-1\}$ such that $\mu_i<\mu_{i+1}$ and let
$$\beta^\vee = \varepsilon^\vee_{v_\mu(i+1)}-\varepsilon^\vee_{v_\mu(i)}+(\mu_{i+1}-\mu_i)K
\quad\hbox{so that}\quad
q^{\mathrm{sh}(-\beta^\vee)}t^{\mathrm{ht}(-\beta^\vee)} 
= q^{\mu_{i+1}-\mu_i }t^{v_\mu(i+1)-v_\mu(i)}.
$$
Let
$
\mathrm{norm}^z_\mu(i) = \hbox{$\frac12$}\big(\ell(zs_iv^{-1}_\mu)-\ell(zv^{-1}_\mu) - \ell(s_i) \big).
$
Then
$$E^z_\mu = \begin{cases}
 \displaystyle{ E^{zs_i}_{s_i\mu}
 +\Big(\frac{1-t}{1-q^{\mathrm{sh}(-\beta^\vee)}t^{\mathrm{ht}(-\beta^\vee)}  } \Big) 
 t^{\mathrm{norm}^z_\mu(i)} E^z_{s_i\mu}, }
&\hbox{if $z(i)<z(i+1)$,} \\
\\
 \displaystyle{ E^{zs_i}_{s_i\mu}
+\Big(\frac{ 1-t } {1- q^{\mathrm{sh}(-\beta^\vee)}t^{\mathrm{ht}(-\beta^\vee)}  }  \Big)
q^{\mathrm{sh}(-\beta^\vee)}t^{\mathrm{ht}(-\beta^\vee)} 
t^{\mathrm{norm}^z_\mu(i)}  E^z_{s_i\mu}, }
&\hbox{if $z(i)>z(i+1)$.}
\end{cases}
$$
\end{prop}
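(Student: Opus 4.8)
Both parts start from the definition $E^z_\mu = \kappa^z_\mu\, T_zE_\mu$ with $\kappa^z_\mu = t^{-\frac12(\ell(zv_\mu^{-1})-\ell(v_\mu^{-1}))}$ from \eqref{Ezmudefn}. The idea is, in each case, to rewrite $T_zE_\mu$ inside the polynomial representation $\CC[x_1^{\pm1},\ldots,x_n^{\pm1}]$ as a combination of $T_wE_\nu$ for the smaller composition $\nu$ appearing on the right-hand side, reassemble the relative Macdonald polynomials $E^w_\nu$, and then check that all the bookkeeping powers of $t$ cancel using Proposition \ref{wmuexplicit} and the coefficients built into \eqref{recursion}.

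\textbf{Part (a).} Let $\nu = (\mu_2,\ldots,\mu_n,\mu_1-1)$, so $\mu = \pi\nu$ under the action \eqref{WXonZn}. The $\pi$-case of \eqref{recursion}, together with $\tau^\vee_\pi = g^\vee = x_1T_1\cdots T_{n-1} = x_1T_{c_n}$ (the word $T_1\cdots T_{n-1}$ being reduced for $c_n$), gives $E_\mu = t^{c}\,x_1T_{c_n}E_\nu$ with $c = \#\{i : \nu_i>\nu_n\}-\frac12(n-1)$, hence $E^z_\mu = \kappa^z_\mu\,t^{c}\,T_zx_1T_{c_n}E_\nu$. The first ingredient is the operator identity
\[ T_z\,x_1\,T_{c_n} \;=\; x_{z(1)}\,T_{zc_n} \qquad\text{in }\ \CC[x_1^{\pm1},\ldots,x_n^{\pm1}], \]
which I would prove by induction on $\ell(z)$: write $z=s_jz'$ with $\ell(z)=\ell(z')+1$, apply the inductive hypothesis to $z'$, and push $T_j$ past $x_{z'(1)}$ using the Demazure--Lusztig relations ($T_jx_k=x_kT_j$ if $k\notin\{j,j+1\}$, and $T_jx_j=x_{j+1}T_j-(t^{\frac12}-t^{-\frac12})x_{j+1}$) together with the quadratic relation $T_j^2=(t^{\frac12}-t^{-\frac12})T_j+1$; the hypothesis forces $(z')^{-1}(j)<(z')^{-1}(j+1)$, which rules out $z'(1)=j+1$, and in the one subcase producing a correction term ($z'(1)=j$) that correction is cancelled exactly by the term $T_jT_{z'c_n}$ contributes through the quadratic relation. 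The second ingredient is the constant count: from $u_{\pi\nu}=\pi u_\nu$ and $u_\mu=t_\mu v_\mu^{-1}$ one gets $\pi t_\nu=t_\mu c_n$, hence $v_\mu^{-1}=c_nv_\nu^{-1}$ and $zc_nv_\nu^{-1}=zv_\mu^{-1}$, so $\kappa^{zc_n}_\nu/\kappa^z_\mu=t^{\frac12(\ell(v_\nu)-\ell(v_\mu))}$; Proposition \ref{wmuexplicit}(f) gives $\ell(v_\nu)-\ell(v_\mu)=(n-1)-2(v_\nu(n)-1)$ while Proposition \ref{wmuexplicit}(b) gives $\#\{i:\nu_i>\nu_n\}=n-v_\nu(n)$, so $\frac12(\ell(v_\nu)-\ell(v_\mu))=c$ and therefore $\kappa^z_\mu\,t^c=\kappa^{zc_n}_\nu$. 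Combining these, $E^z_\mu=x_{z(1)}\,\kappa^{zc_n}_\nu\,T_{zc_n}E_\nu=x_{z(1)}E^{zc_n}_\nu$.

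\textbf{Part (b).} Put $\lambda=s_i\mu$; since $\mu_i<\mu_{i+1}$ we have $\lambda_i>\lambda_{i+1}$, so \eqref{CXlambdaaction} applies with $\lambda$ in place of $\mu$, and $t^{\frac12}T_iE_\lambda=-\frac{1-t}{1-a_\lambda}E_\lambda+E_{s_i\lambda}$ with $E_{s_i\lambda}=E_\mu$ gives $E_\mu=\bigl(t^{\frac12}T_i+\frac{1-t}{1-a_\lambda}\bigr)E_{s_i\mu}$. By Proposition \ref{wmuexplicit}(e), $v_{s_i\mu}=v_\mu s_i$, so $a_\lambda=q^{\mu_{i+1}-\mu_i}t^{v_\mu(i+1)-v_\mu(i)}=q^{\mathrm{sh}(-\beta^\vee)}t^{\mathrm{ht}(-\beta^\vee)}$, which I abbreviate $A$. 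Now apply $T_z$. If $z(i)<z(i+1)$ then $\ell(zs_i)=\ell(z)+1$, $T_zT_i=T_{zs_i}$, and $E^z_\mu=\kappa^z_\mu\bigl(t^{\frac12}T_{zs_i}E_{s_i\mu}+\frac{1-t}{1-A}T_zE_{s_i\mu}\bigr)$. If $z(i)>z(i+1)$ then $\ell(zs_i)=\ell(z)-1$, so $T_z=T_{zs_i}T_i$, $T_zT_i=T_{zs_i}T_i^2=(t^{\frac12}-t^{-\frac12})T_z+T_{zs_i}$, and the identity $(t-1)+\frac{1-t}{1-A}=\frac{(1-t)A}{1-A}$ gives $E^z_\mu=\kappa^z_\mu\bigl(t^{\frac12}T_{zs_i}E_{s_i\mu}+\frac{(1-t)A}{1-A}T_zE_{s_i\mu}\bigr)$. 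It remains to re-express the two summands via relative Macdonald polynomials: since $v_{s_i\mu}^{-1}=s_iv_\mu^{-1}$ and $\mu_i<\mu_{i+1}$ forces $v_\mu(i)<v_\mu(i+1)$ (Proposition \ref{wmuexplicit}(e)), we have $\ell(s_iv_\mu^{-1})=\ell(v_\mu^{-1})+1$; then $zs_iv_{s_i\mu}^{-1}=zv_\mu^{-1}$ gives $\kappa^z_\mu t^{\frac12}=\kappa^{zs_i}_{s_i\mu}$, so $\kappa^z_\mu t^{\frac12}T_{zs_i}E_{s_i\mu}=E^{zs_i}_{s_i\mu}$; and $zv_{s_i\mu}^{-1}=zs_iv_\mu^{-1}$ together with the definition of $\mathrm{norm}^z_\mu(i)$ gives $\kappa^z_\mu/\kappa^z_{s_i\mu}=t^{\mathrm{norm}^z_\mu(i)}$, so $\kappa^z_\mu T_zE_{s_i\mu}=t^{\mathrm{norm}^z_\mu(i)}E^z_{s_i\mu}$. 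Substituting produces exactly the two displayed formulas.

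\textbf{Main obstacle.} The only genuinely non-routine step is the operator identity $T_zx_1T_{c_n}=x_{z(1)}T_{zc_n}$ used in Part (a): a naive expansion of the left side is a sum of terms $x_kT_w$ with $\ell(w)$ as large as $\ell(z)+(n-1)$, and one must see that the Demazure--Lusztig and quadratic corrections cancel all of them except the single monomial-times-$T_{zc_n}$ term on the right (already $n=2$, $z=s_1$ exhibits this: $T_1x_1T_1=x_2$). The induction above is the clean way to see it; alternatively one can try to derive it from the braid relations $g^\vee T_i=T_{i+1}g^\vee$ ($1\le i\le n-2$) satisfied by $g^\vee=\tau^\vee_\pi$ in the double affine Hecke algebra. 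Everything else --- the use of \eqref{CXlambdaaction} in Part (b), and throughout the tallying of powers of $t$ in the normalizations --- is bookkeeping driven entirely by Proposition \ref{wmuexplicit} and the coefficients in \eqref{recursion}.
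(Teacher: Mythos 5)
Your proof is correct and takes essentially the same route as the paper's: part (b) is exactly the paper's expansion of the intertwiner $\tau^\vee_i$ in its two forms $T_i+f$ and $T_i^{-1}+fa$ (your $T_zT_i=(t^{\frac12}-t^{-\frac12})T_z+T_{zs_i}$ manipulation is the identity \eqref{GLnintertwineralternative} in disguise), and the normalization bookkeeping via $v_{s_i\mu}^{-1}=s_iv_\mu^{-1}$ and Proposition \ref{wmuexplicit} matches the paper's. The ``main obstacle'' you single out in part (a), the identity $T_z\,g^\vee=x_{z(1)}T_{zc_n}$, is precisely what the paper isolates as Proposition \ref{TtoXconversion} and proves by the conjugation relation $g^\vee T_i(g^\vee)^{-1}=T_{i+1}$ (your second suggested route); your Demazure--Lusztig induction is a valid alternative proof of that same identity.
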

\begin{proof} 
(a)  By the second identity in Proposition \ref{TtoXconversion}, $T_z g^\vee = x_{z(1)} T_{zc_n}$ giving
\begin{align*}
T_z \tau_{u_\mu}\mathbf{1} = T_z \tau^\vee_\pi \tau^\vee_{u_{\pi^{-1}\mu}}\mathbf{1}
=T_z g^\vee \tau^\vee_{u_{\pi^{-1}\mu}}\mathbf{1}
=x_{z(1)} T_{zc_n} \tau^\vee_{u_{\pi^{-1}\mu}}\mathbf{1}.
\end{align*}
Then (a) follows by using
$E^z_\mu 
= t^{-\frac12 \ell(zv^{-1}_\mu) } T_z \tau^\vee_{u_\mu} \mathbf{1}
$
to rewrite each side, and computing
$$\ell(zv^{-1}_\mu) - \ell(zc_nv^{-1}_{\pi^{-1}\mu}) 
= \ell(\overline{zu_\mu}) - \ell(\overline{zc_n u_{\pi^{-1}\mu} })
= \ell(\overline{zu_\mu}) - \ell(\overline{zc_n \pi^{-1} u_\mu })
= \ell(\overline{zu_\mu}) - \ell(\overline{zc_n c^{-1}_n u_\mu }) =0,
$$
where $\overline{\phantom{T}}\colon W\to S_n$ is the homomorphism defined in 
\eqref{directionhom}.

\smallskip\noindent
(b) 
Let $\nu = s_i\mu$ and let $a_\nu = q^{\nu_i-\nu_{i+1}}t^{v_\nu(i)-v_\nu(i+1)}
=q^{\mathrm{sh}(-\beta^\vee)}t^{\mathrm{ht}(-\beta^\vee)}$.
Using \eqref{recursion}, \eqref{GLnintertwineralternative} and the eigenvalue
formula from \eqref{Macpolyconst}, then
\begin{align*}
T_z \tau^\vee_{u_\mu}\mathbf{1}
&= T_z \tau^\vee_i \tau^\vee_{u_{s_i\mu}} \mathbf{1}
=\begin{cases}
T_z \Big(T_i + \Big(\frac{t^{-\frac12}(1-t)}{1-a_\nu } \Big)\Big) \tau^\vee_{u_{s_i\mu}} \mathbf{1},
&\hbox{if $zs_i>z$,} \\
T_z \Big(T^{-1}_i + \Big(\frac{t^{-\frac12}(1-t)}{1-a_\nu } \Big)a_\nu \Big) \tau^\vee_{u_{s_i\mu}} \mathbf{1},
&\hbox{if $zs_i<z$,}
\end{cases}
\\
&= \begin{cases}
T_{zs_i} \tau^\vee_{u_{s_i\mu}} \mathbf{1}
+\Big(\frac{t^{-\frac12}(1-t)}{1-a_\nu } \Big) T_z \tau^\vee_{u_{s_i\mu}} \mathbf{1} ,
&\hbox{if $zs_i>z$,} \\
T_{zs_i} \tau^\vee_{u_{s_i\mu}} \mathbf{1}
+\Big(\frac{t^{-\frac12}(1-t)}{1-a_\nu } \Big)a_\nu T_z \tau^\vee_{u_{s_i\mu}} \mathbf{1}, 
&\hbox{if $zs_i<z$,}
\end{cases}
\end{align*}
Then, using
$E^z_\mu 
= t^{-\frac12 \ell(zv^{-1}_\mu) } T_z \tau^\vee_\mu \mathbf{1}
$
to obtain
$$
E^z_{\mu} = \begin{cases}
\displaystyle{
t^{\frac12(\ell(zs_iv^{-1}_{s_i\mu}) - \ell(zv^{-1}_\mu))} E^{zs_i}_{s_i\mu}  
+ \Big(\frac{1-t}{1-a_\nu } \Big) t^{\frac12 (-1+\ell(zs_iv^{-1}_\mu) - \ell(zv^{-1}_\mu) )} E^z_{s_i\mu},
}
&\hbox{if $zs_i>z$,}  \\
\displaystyle{
t^{\frac12(\ell(zs_iv^{-1}_{s_i\mu}) - \ell(zv^{-1}_\mu))} E^{zs_i}_{s_i\mu}  
+ \Big(\frac{1-t}{1-a_\nu } \Big) a_\nu t^{\frac12 (-1+\ell(zs_iv^{-1}_\mu) - \ell(zv^{-1}_\mu) )}
E^z_{s_i\mu},
}
&\hbox{if $zs_i<z$.}
\end{cases}
$$
By Proposition \ref{wmuexplicit}(e), $v^{-1}_{s_i\mu} = s_iv^{-1}_{\mu}$ and so 
$t^{\frac12(\ell(zs_iv^{-1}_{s_i\mu}) - \ell(zv^{-1}_\mu))}
=t^{\frac12(\ell(zs_is_iv^{-1}_{\mu}) - \ell(zv^{-1}_\mu))}=t^0=1$ and
the result follows.
\end{proof}

\subsection{Box by box recursion for computing $E^z_\mu$}

Proposition \ref{boxbybox} executes several steps of Proposition \ref{stepbystep} at once to
provide a recursion for computing $E^z_\mu$ which removes a box at each application of
the recursion.
Iterating this recursion delivers a monomial expansion of $E^z_\mu$ as
a weighted sum of nonattacking fillings $T$. 
The permutation sequence $\vec z_T$ of the nonattacking filling $T$ 
(see \eqref{permseqsemidefn})
is the sequence of permutations $z_0,z_1, \ldots$ which arise as superscripts of the 
$E^{z_i}_\nu$ which occur in the intermediate applications of the box-by-box recursion.

\begin{lemma} \label{compressionlemma} (Compressing $2^{j-1}$ terms to $j$ terms)
Let $j\in \{1, \ldots, n\}$ and let
$$\mu = (0, \ldots, 0, \mu_j, \ldots, \mu_n)\in \ZZ^n_{\ge 0}
\qquad\hbox{with $\mu_1=0, \ldots, \mu_{j-1} = 0$ and $\mu_j \ne 0$.}
$$
and let
$\gamma = \pi\nu = (\mu_j, 0, \ldots, 0, \mu_{j+1}, \ldots, \mu_n).$
Let $\tau^\vee_1, \ldots, \tau^\vee_{n-1}$ be the 
intertwiners of \eqref{earlyintertwinerdefn} acting on $\CC[x_1^{\pm1}, \ldots, x_n^{\pm1}]$
by \eqref{DAHAonCX} and \eqref{GLnCDopsdefn}.
\item[(a)] 
$\displaystyle{
\tau^\vee_{j-1}\cdots \tau^\vee_1E_\gamma
= T_{j-1}\cdots T_1E_\gamma
+ \frac{1-t}{1-q^{\mu_j}t^{v_\mu(j)-(j-1)} }
\sum_{a=1}^{j-1} T_{a-1} \cdots T_1 t^{-\frac12(j-a)} E_\gamma.
}$
\item[(b)] Let $i\in \{1, \ldots, j-1\}$.  Then
\begin{align*}
\tau^\vee_{j-1}\cdots \tau^\vee_1E_\gamma
&= T^{-1}_{j-1}T^{-1}_{j-2} \cdots T^{-1}_i T_{i-1} \cdots T_1E_\gamma \\
&\qquad\quad + \frac{1-t}{1-q^{\mu_j}t^{v_\mu(j)-(j-1)} } q^{\mu_j}t^{v_\mu(j)-(j-1)}
\sum_{a =i }^{j-1} T_{a-1} \cdots T_1 t^{-\frac12(j-a)} E_\gamma \\
&\qquad\qquad\qquad + \frac{1-t}{1-q^{\mu_j}t^{v_\mu(j)-(j-1)} }
\sum_{a=1}^{i-1} T_{a-1} \cdots T_1 t^{-\frac12(j-a)} E_\gamma.
\end{align*}
\end{lemma}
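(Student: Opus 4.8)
\emph{Proof idea.} I would prove (a) by induction on $k$ for the partial intertwiner products $\Phi_k:=\tau^\vee_k\cdots\tau^\vee_1E_\gamma$, and then deduce (b) from (a) by rewriting $T_m=T_m^{-1}+(t^{\frac12}-t^{-\frac12})$ (the Hecke quadratic relation, equivalently the difference of the two expressions \eqref{earlyintertwinerdefn} and \eqref{GLnintertwineralternative} for $\tau^\vee_m$) one factor at a time. The case $j=1$ is trivial (both sides of (a) collapse to $E_\gamma$ and (b) is vacuous), so assume $j\ge 2$.

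\emph{Setup.} For $m\in\{0,1,\ldots,j-1\}$ put $\gamma^{(m)}=s_ms_{m-1}\cdots s_1\gamma$, so that $\gamma^{(0)}=\gamma$, $\gamma^{(j-1)}=\mu$, and $\gamma^{(m)}$ has the entry $\mu_j$ in position $m+1$ and $0$ in every other position of $\{1,\ldots,j\}$. Proposition \ref{wmuexplicit}(b) gives $v_{\gamma^{(m)}}(m+1)=v_\mu(j)$ and $v_{\gamma^{(m)}}(m+2)=m+1$ for $m\le j-2$; so, setting $y_m:=q^{\mu_j}t^{\,v_\mu(j)-1-m}$ (note $y_{j-2}=q^{\mu_j}t^{\,v_\mu(j)-(j-1)}$ is the quantity in the denominators, and $t\,y_m=y_{m-1}$), Theorem \ref{Macpolyconst} yields $Y_{k+1}^{-1}Y_{k+2}E_{\gamma^{(k)}}=y_kE_{\gamma^{(k)}}$ for $k\le j-2$, while $Y_{k+1}^{-1}Y_{k+2}E_{\gamma^{(m')}}=t^{-1}E_{\gamma^{(m')}}$ for all $m'\le k-1$ (two equal adjacent zeros in positions $k+1,k+2$). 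Since $\gamma^{(m-1)}_m=\mu_j>0=\gamma^{(m-1)}_{m+1}$, equation \eqref{CXlambdaaction} (applied with $\mu\mapsto\gamma^{(m-1)}$, $i\mapsto m$) gives $t^{\frac12}\tau^\vee_mE_{\gamma^{(m-1)}}=E_{\gamma^{(m)}}$ and $t^{\frac12}T_mE_{\gamma^{(m-1)}}=E_{\gamma^{(m)}}-\tfrac{1-t}{1-y_{m-1}}E_{\gamma^{(m-1)}}$. Iterating these, together with \eqref{Tigivest} (which gives $T_pE_{\gamma^{(m')}}=t^{\frac12}E_{\gamma^{(m')}}$ when $m'<p\le j-1$, and $\tau^\vee_{k+1}E_\gamma=0$ when $k\le j-2$), one gets $\Phi_m=t^{-m/2}E_{\gamma^{(m)}}$ and that $T_k\cdots T_1E_\gamma$ is a linear combination of $E_{\gamma^{(0)}},\ldots,E_{\gamma^{(k)}}$ with the coefficient of $E_{\gamma^{(k)}}$ equal to $t^{-k/2}$ — hence the $E_{\gamma^{(k)}}$-component of $T_k\cdots T_1E_\gamma$ is exactly $\Phi_k$. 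Finally, since $T_{k+1}$, $Y_{k+1}^{\pm1}$ and $Y_{k+2}^{\pm1}$ all commute with $T_1,\ldots,T_{k-1}$, so does $\tau^\vee_{k+1}$.

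\emph{Part (a).} Induct on $k$, proving
$$\Phi_k=T_k\cdots T_1E_\gamma+\frac{1-t}{1-y_{k-1}}\sum_{a=1}^k t^{-\frac12(k+1-a)}T_{a-1}\cdots T_1E_\gamma,$$
of which the case $k=j-1$ is the assertion (a). The base $k=1$ is immediate from \eqref{earlyintertwinerdefn}. For the step, $\Phi_{k+1}=\tau^\vee_{k+1}\Phi_k$; the commutation facts together with $\tau^\vee_{k+1}E_\gamma=0$ annihilate every summand $T_{a-1}\cdots T_1E_\gamma$ with $a\le k$, so $\Phi_{k+1}=\tau^\vee_{k+1}\bigl(T_k\cdots T_1E_\gamma\bigr)=T_{k+1}\cdots T_1E_\gamma+\tfrac{t^{-\frac12}(1-t)}{1-Y_{k+1}^{-1}Y_{k+2}}\bigl(T_k\cdots T_1E_\gamma\bigr)$. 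Decompose $T_k\cdots T_1E_\gamma$ into its $Y_{k+1}^{-1}Y_{k+2}$-eigencomponents: on the $t^{-1}$-component (the part spanned by $E_{\gamma^{(0)}},\ldots,E_{\gamma^{(k-1)}}$) the operator $\tfrac{t^{-\frac12}(1-t)}{1-Y_{k+1}^{-1}Y_{k+2}}$ acts as $-t^{\frac12}$, and the $y_k$-component equals $\Phi_k$. A short computation using $t\,y_k=y_{k-1}$ turns this into $\Phi_{k+1}=T_{k+1}\cdots T_1E_\gamma-t^{\frac12}T_k\cdots T_1E_\gamma+\tfrac{t^{-\frac12}(1-y_{k-1})}{1-y_k}\Phi_k$; substituting the inductive hypothesis for $\Phi_k$ and collecting coefficients (again via $t\,y_k=y_{k-1}$) produces the formula at $k+1$.

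\emph{Part (b) and the obstacle.} Starting from the formula in (a), in the term $T_{j-1}\cdots T_1E_\gamma$ I successively replace $T_m$ by $T_m^{-1}+(t^{\frac12}-t^{-\frac12})$ for $m=j-1,j-2,\ldots,i$. At the $m$-th stage the new remainder is $(t^{\frac12}-t^{-\frac12})\,T_{j-1}^{-1}\cdots T_{m+1}^{-1}T_{m-1}\cdots T_1E_\gamma$; commuting $T_{j-1}^{-1},\ldots,T_{m+1}^{-1}$ past $T_{m-1},\ldots,T_1$ (indices differ by at least $2$) and using $T_p^{-1}E_\gamma=t^{-\frac12}E_\gamma$ for $p\in\{m+1,\ldots,j-1\}$ (positions where $\gamma$ vanishes, by \eqref{Tigivest}), it collapses to $(t^{\frac12}-t^{-\frac12})\,t^{-\frac12(j-1-m)}T_{m-1}\cdots T_1E_\gamma$. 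Adding these $j-i$ remainders to the sum already present, and simplifying each coefficient with index $a\in\{i,\ldots,j-1\}$ by the identity $(t-1)+\tfrac{1-t}{1-y_{j-2}}=\tfrac{y_{j-2}(1-t)}{1-y_{j-2}}$, regroups the whole expression into exactly the two sums of (b). The step I expect to be the crux is the inductive step of (a): recognizing the $y_k$-eigencomponent of the tail $T_k\cdots T_1E_\gamma$ as $\Phi_k$ and then pushing through the cancellation that converts the denominator $1-y_{k-1}$ of the hypothesis into $1-y_k$ at the next stage — this is precisely the place where $\tau^\vee_{k+1}$ ``compresses'' against $T_k\cdots T_1$, and is the technical heart of the lemma.
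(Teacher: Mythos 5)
Your proposal is correct, and it reaches both identities by a route that is recognizably parallel to, but mechanically different from, the paper's. For (a), the paper converts each intertwiner $\tau^\vee_i$ acting on $\tau^\vee_{i-1}\cdots\tau^\vee_1E_\gamma$ into an operator $C_{-\beta^\vee_i}=T_i+f_{-\beta^\vee_i}$ with a \emph{scalar} correction, by first pushing $Y_i^{-1}Y_{i+1}$ through the earlier intertwiners via $Y_i\tau^\vee_w=\tau^\vee_wY_{w^{-1}(i)}$ and then evaluating on $E_\gamma$; the expansion of $C_{-\beta^\vee_{j-1}}\cdots C_{-\beta^\vee_1}E_\gamma$ then peels off one scalar at a time and the coefficients telescope via \eqref{Csmash}. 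You instead keep the rational function in the $Y$'s as an operator and, at each inductive step, split $T_k\cdots T_1E_\gamma$ into its two $Y_{k+1}^{-1}Y_{k+2}$-eigencomponents (eigenvalue $y_k$ on $E_{\gamma^{(k)}}$ and $t^{-1}$ on the span of the earlier $E_{\gamma^{(m')}}$), identifying the $y_k$-component with $\Phi_k$. The two computations are equivalent --- your $y_m$ is the paper's $d_{-\beta^\vee_{m+1}}$ and your cancellation $t\,y_k=y_{k-1}$ is exactly the telescoping in \eqref{Csmash} --- but the paper's single evaluation avoids the extra bookkeeping your version needs, namely that the $E_{\gamma^{(k)}}$-coefficient of $T_k\cdots T_1E_\gamma$ is $t^{-k/2}$ so that its $y_k$-eigencomponent really is $\Phi_k$ (this does hold, by \eqref{CXlambdaaction} and \eqref{Tigivest}, and the genericity assumption on $q,t$ keeps the eigenvalues distinct so the decomposition is legitimate). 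For (b), you derive the identity forward from (a) by substituting $T_m=T_m^{-1}+(t^{\frac12}-t^{-\frac12})$ one factor at a time and collapsing each remainder with $T_p^{-1}E_\gamma=t^{-\frac12}E_\gamma$; the paper instead names the claimed right-hand sides $R_i$ and verifies $R_{i+1}-R_i=0$. These are the same calculation read in opposite directions, and both hinge on the single identity $(t-1)+\frac{1-t}{1-y}=\frac{y(1-t)}{1-y}$, which is the paper's \eqref{savings}.
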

\begin{proof}
Let $\ev^\rho_\gamma\colon \CC(Y)\to \CC(q,t)$ be the homomorphism given by
$$\ev^\rho_\gamma(Y_i) = q^{-\gamma_i}t^{-(v_\gamma(i)-1)+\frac12(n-1)}
\qquad\hbox{so that}\qquad
fE_\gamma = \ev^\rho_\gamma(f) E_\mu,\quad\hbox{for $f\in \CC(Y)$.}
$$
(we shall only apply this to rational expressions in $Y_1, \ldots, Y_n$ where the denominator
does not evaluate to $0$.)
For $i\in \{1, \ldots, n-1\}$ set $\alpha^\vee_i = \varepsilon^\vee_i - \varepsilon^\vee_{i+1}$  and let
$$\beta^\vee_1 = \alpha^\vee_1=\varepsilon^\vee_1-\varepsilon^\vee_2, \qquad
\beta^\vee_2 = s_1\alpha^\vee_2=\varepsilon^\vee_1-\varepsilon^\vee_3, \qquad \ldots \qquad
\beta^\vee_{j-1} = s_1\cdots s_{j-2}\alpha^\vee_{j-1}=\varepsilon^\vee_1-\varepsilon^\vee_j.
$$
For $i\in \{1, \ldots, j-1\}$ let
$$
Y^{-\beta^\vee_i} = Y_1^{-1}Y_{i+1}, \qquad
F_{-\beta^\vee_i} = \frac{t^{-\frac12}(1-t)}{1-Y^{-\beta^\vee_i}},
\qquad
C_{-\beta^\vee_i} = T_i +f_{-\beta^\vee_i}
\qquad\hbox{and}\qquad
$$
$$
d_{-\beta^\vee_i} = \ev^\rho_\gamma(Y^{-\beta^\vee_i}),
\qquad
f_{-\beta^\vee_i} = \ev^\rho_\gamma(F_{-\beta^\vee_i}), \qquad
c_{-\beta^\vee_i} = \ev^\rho_\gamma(t^{\frac12} + F_{-\beta^\vee_i})
$$
If $i\in \{1, \ldots, j-1\}$ then
\begin{align}
&c_{-\beta^\vee_i} = \ev^\rho_\gamma\Big(
\frac{ t^{-\frac12}(1-tY^{-\beta_i^\vee})}{1-Y^{-\beta^\vee_i}}\Big)
=\ev^\rho_\gamma(t^{\frac12} + F_{-\beta^\vee_i}) = t^{\frac12}+f_{-\beta^\vee_i}, \\
&d_{-\beta^\vee_i} = q^{\gamma_1-\gamma_{i+1}}t^{v_\gamma(1)-v_\gamma(i+1)}
= q^{\mu_j}t^{v_{\gamma}(1)-i} = td_{-\beta^\vee_{i+1}},
\nonumber \\
&c_{\beta_{j-1}} \cdots c_{\beta_{a+1}} f_{\beta_{a}} 
=\Big( \frac{t^{-\frac12}\cancel{(1-td_{-\beta^\vee_{j-1}})} }{1-d_{-\beta^\vee_{j-1}}}\Big)
\cdots \Big( \frac{t^{-\frac12}\cancel{(1-td_{-\beta^\vee_{a+1}})}}
{\cancel{1-d_{-\beta^\vee_{a+1}}}}\Big)
\Big( \frac{t^{-\frac12}(1-t)}{\cancel{1-d_{-\beta^\vee_{a}} }}\Big) 
\nonumber \\
&\qquad\qquad\qquad = t^{-\frac12(j-1-a)}\Big( \frac{t^{-\frac12}(1-t)}{1-d_{-\beta^\vee_{j-1}} }\Big) 
= t^{-\frac12(j-1-a)}f_{\beta^\vee_{j-1}}
= t^{-\frac12(j-a)}t^{\frac12} f_{\beta^\vee_{j-1}},
\label{Csmash}
\\
&t-1+t^{\frac12}f_{-\beta^\vee_i} = d_{-\beta^\vee_i} t^{\frac12} f_{-\beta^\vee_i},
\label{savings}
\end{align}
where the last equality follows from
$$(t^{\frac12}-t^{-\frac12})+F_{-\beta^\vee_i} = 
(t^{\frac12}-t^{-\frac12})+ \frac{t^{-\frac12}(1-t)}{1-Y^{-\beta^\vee_i}}
=\frac{t^{-\frac12}(1-t)Y^{-\beta^\vee_i}}{1-Y^{-\beta^\vee_i}}
= Y^{-\beta^\vee_i} F_{-\beta^\vee_i}.$$

Since 
\begin{align*}
\tau^\vee_i \tau^\vee_{i-1}\cdots \tau^\vee_1E_\gamma
&=(T_i+F_{-\alpha_i})\tau^\vee_{i-1}\cdots \tau^\vee_1E_\gamma
=(T_i+\ev^\rho_\gamma(F_{-s_1\cdots s_{i-1}\alpha^\vee_i})
\tau^\vee_{i-1}\cdots \tau^\vee_1E_\gamma \\
&=(T_i+f_{-\beta^\vee_i})\tau^\vee_{i-1}\cdots \tau^\vee_1E_\gamma
=C_{-\beta^\vee_i} \tau^\vee_{i-1}\cdots \tau^\vee_1E_\gamma
\end{align*}
then
$\tau^\vee_{j-1}\cdots \tau^\vee_1 E_\gamma
=C_{-\beta^\vee_{j-1}}\cdots C_{-\beta^\vee_1} E_\gamma.$
$$\hbox{For $i\in \{2, \ldots, j-1\}$,}\qquad
C_{-\beta^\vee_i}E_\gamma = (T_i+f_{-\beta^\vee_i})E_\gamma
=(t^{\frac12}+f_{-\beta^\vee_i})E_\gamma = c_{-\beta^\vee_i}E_\gamma.
$$
Thus,
\begin{align}
\tau^\vee_{j-1}\cdots \tau^\vee_1E_\gamma
&=C_{-\beta^\vee_{j-1}}\cdots C_{-\beta^\vee_1} E_\gamma
= C_{-\beta^\vee_{j-1}}\cdots C_{-\beta^\vee_2} (T_1+ f_{-\beta^\vee_1})E_\gamma 
\nonumber \\
&= C_{-\beta^\vee_{j-1}}\cdots C_{-\beta^\vee_2} T_1E_\gamma
+ f_{-\beta^\vee_1}C_{-\beta^\vee_{j-1}}\cdots C_{-\beta^\vee_2}  E_\gamma 
\nonumber \\
&= C_{-\beta^\vee_{j-1}}\cdots C_{-\beta^\vee_2}  T_1E_\gamma
+ c_{-\beta^\vee_{j-1}}\cdots c_{-\beta^\vee_2} f_{-\beta^\vee_1}E_\gamma 
\label{compressionlineA}
\\
&= C_{-\beta^\vee_{j-1}}\cdots C_{-\beta^\vee_3}  (T_2+ f_{-\beta^\vee_2}) T_1E_\gamma
+ c_{-\beta^\vee_{j-1}}\cdots c_{-\beta^\vee_2} f_{-\beta^\vee_1}E_\gamma 
\nonumber \\
&= C_{-\beta^\vee_{j-1}}\cdots C_{-\beta^\vee_3}  T_2 T_1 E_\gamma
+ c_{-\beta^\vee_{j-1}}\cdots c_{-\beta^\vee_3} f_{-\beta^\vee_2} T_1 E_\gamma
+ c_{-\beta^\vee_{j-1}}\cdots c_{-\beta^\vee_2} f_{-\beta^\vee_1}E_\gamma,
\nonumber 
\end{align}
and continuing this process and using \eqref{Csmash} gives (a).

Let $R_i$ be the right hand side of the expression in statement of (b), so that the identity
in (a) can be considered as $\tau^\vee_{j-1}\cdots \tau^\vee_1E_\gamma = R_j$.
Then, canceling the common terms in $R_{i+1}$ and $R_i$ gives 
\begin{align*}
R_{i+1}-R_i 
&= 
T^{-1}_{j-1} \cdots T^{-1}_{i+1} T_i \cdots T_1E_\gamma
+T_{i-1} \cdots T_1 t^{-\frac12(j-i)} t^{\frac12} f_{-\beta^\vee_{j-1}}E_\gamma \\
&\qquad\quad -T^{-1}_{j-1}T^{-1}_{j-2} \cdots T^{-1}_i T_{i-1} \cdots T_1E_\gamma 
- t^{\frac12} f_{-\beta^\vee_{j-1}} d_{-\beta^\vee_j}
t^{-\frac12(j-i)} T_{i-1} \cdots T_1   E_\gamma \\
&=T^{-1}_{j-1} \cdots T^{-1}_{i+1} (T_i^{-1}+t^{-\frac12}(t - 1))T_{i-1} \cdots T_1E_\gamma 
+T_{i-1} \cdots T_1 t^{-\frac12(j-i)} t^{\frac12} f_{-\beta^\vee_{j-1}}E_\gamma \\
&\qquad -T^{-1}_{j-1}T^{-1}_{j-2} \cdots T^{-1}_i T_{i-1} \cdots T_1E_\gamma 
- t^{\frac12} f_{-\beta^\vee_{j-1}} d_{-\beta^\vee_j}
t^{-\frac12(j-i)} T_{i-1} \cdots T_1   E_\gamma \\
&= t^{-\frac12}(t - 1)T_{i-1} \cdots T_1   T^{-1}_{j-1} \cdots T^{-1}_{i+1}E_\gamma
+ t^{-\frac12(j-i)} t^{\frac12} f_{-\beta^\vee_{j-1}}
(1 -  d_{-\beta^\vee_j}) T_{i-1} \cdots T_1   E_\gamma \\
&= t^{-\frac12(j-i)}( (t - 1) + t^{\frac12} f_{-\beta^\vee_{j-1}}
-  t^{\frac12} f_{-\beta^\vee_{j-1}}d_{-\beta^\vee_j}) T_{i-1} \cdots T_1   E_\gamma = 0,
\end{align*}
by \eqref{savings}. Thus $\tau^\vee_{j-1}\cdots \tau^\vee_1E_\gamma =R_j = R_{j-1}=\ldots =R_1$ and this establishes (b).
\end{proof}

Using Propostion \ref{stepbystep}(a) and adjusting for the normalization in the
definition of $E^z_\nu$ in 
\eqref{Ezmudefn} produces the following box by box recursion for the relative
Macdonald polynomials $E^z_\mu$.

\begin{prop}  \label{boxbybox}
Let $z\in S_n$.
Let 
$$\mu = (0, \ldots, 0, \mu_j, \ldots, \mu_n)\in \ZZ^n_{\ge 0}
\qquad\hbox{with $\mu_1=0, \ldots, \mu_{j-1} = 0$ and $\mu_j \ne 0$.}
$$
and let $\nu = (0, \ldots, 0, \mu_{j+1}, \ldots, \mu_n, \mu_j-1)$.
For $m\in \{0, \ldots, n\}$ let $c^{-1}_m = s_{m-1}\cdots s_2s_1$ (which is an $m$-cycle in $S_n$).
Let $y = (y(1), \ldots, y(n))$ be the permutation which has
$y(k) = z(k)$ for $k\in \{j, \ldots, n\}$
$$
\hbox{and}\quad
\{y(1), \ldots, y(j-1)\} = \{z(1), \ldots, z(j-1)\}\quad\hbox{and}\quad
y(1)<\cdots<y(j-1).
$$
Then
\begin{align*}
E^z_\mu
&=  x_{y(j)} E^{yc^{-1}_jc_n}_\nu
+ \frac{(1-t)}{1-q^{\mu_j}t^{v_\mu(j)-(j-1) }}
\sum_{a=1}^{j-1} 
q^{\mathrm{maj}^y_\mu(a)}t^{\mathrm{covid}^y_\mu(a)} x_{y(a) }  E^{yc^{-1}_a c_n}_\nu,
\end{align*}
where, for $a\in \{1, \ldots, j-1\}$,
$$
\mathrm{maj}^y_\mu(a) = \begin{cases}
0, &\hbox{if $y(j)>y(a)$,}  \\
\mu_j, &\hbox{if $y(j)<y(a)$,} 
\end{cases}
\qquad\qquad\hbox{and}
$$
\begin{align*}
\mathrm{covid}^y_\mu(a) 
&= \begin{cases}
\hbox{$\frac12$}(\ell(yc^{-1}_ac_jv^{-1}_\mu) - \ell(yv^{-1}_\mu)-\ell(c^{-1}_ac_j),  
&\hbox{if $y(j)>y(a)$,} \\
v_\mu(j)-(j-1) +\hbox{$\frac12$}(\ell(yc^{-1}_ac_jv^{-1}_\mu) - \ell(yv^{-1}_\mu)-\ell(c^{-1}_ac_j)),
&\hbox{if $y(j)< y(a)$.} 
\end{cases}
\end{align*}
\end{prop}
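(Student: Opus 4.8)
The plan is to combine the compression Lemma~\ref{compressionlemma} (which collapses the $2^{j-1}$ intertwiner terms into $j$ terms) with Proposition~\ref{stepbystep}(a) (which peels one monomial off each term); the remainder is bookkeeping of powers of $t$. First I would reduce to the case $z=y$, i.e. $z(1)<\cdots<z(j-1)$. Since $\mu_1=\cdots=\mu_{j-1}=0$, Proposition~\ref{wmuexplicit}(b) gives $v_\mu(i)=i$ for $i\le j-1$, so $v_\mu$ and $v_\mu^{-1}$ fix $\{1,\dots,j-1\}$ pointwise and preserve $\{j,\dots,n\}$. Writing $z=y\rho$ with $\rho=y^{-1}z\in S_{j-1}=\langle s_1,\dots,s_{j-2}\rangle$: relation \eqref{Tigivest} (applied with $\mu_i=\mu_{i+1}=0$) gives $T_iE_\mu=t^{\frac12}E_\mu$ for $i\le j-2$, hence $T_\rho E_\mu=t^{\frac12\ell(\rho)}E_\mu$; and since $\rho$ commutes with $v_\mu^{-1}$ while $yv_\mu^{-1}$ is increasing on $\{1,\dots,j-1\}$, we have $\ell(z)=\ell(y)+\ell(\rho)$ and $\ell(zv_\mu^{-1})=\ell(yv_\mu^{-1})+\ell(\rho)$. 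Feeding these into \eqref{Ezmudefn} yields $E^z_\mu=E^y_\mu$.

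Next I would open up $E^y_\mu$ via the box-greedy reduced word. By \eqref{KSdefn}, $u_\mu=s_{j-1}\cdots s_1\,\pi\,u_\nu$ is a reduced word, so $\tau^\vee_{u_\mu}=\tau^\vee_{j-1}\cdots\tau^\vee_1\,g^\vee\,\tau^\vee_{u_\nu}$; since $E_\mu$, $E_\nu$ are the normalizing scalar multiples of $\tau^\vee_{u_\mu}\mathbf{1}$, $\tau^\vee_{u_\nu}\mathbf{1}$, it follows that $E_\mu$ is a power of $t$ times $\tau^\vee_{j-1}\cdots\tau^\vee_1\,g^\vee E_\nu$, and $g^\vee E_\nu=(\mathrm{const})E_{\pi\nu}=(\mathrm{const})E_\gamma$ by \eqref{recursion}, where $\gamma=\pi\nu=(\mu_j,0,\dots,0,\mu_{j+1},\dots,\mu_n)$. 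So $E^y_\mu=(\mathrm{const})\,T_y\,\tau^\vee_{j-1}\cdots\tau^\vee_1 E_\gamma$, the constant being a power of $t$ to be fixed at the very end. Now let $i^\ast\in\{1,\dots,j\}$ be the slot into which $y(j)$ inserts in $y(1)<\cdots<y(j-1)$, so $y(a)<y(j)$ for $a<i^\ast$ and $y(a)>y(j)$ for $a\ge i^\ast$, and apply Lemma~\ref{compressionlemma}(b) with $i=i^\ast$ (or part~(a), the case $i^\ast=j$): $\tau^\vee_{j-1}\cdots\tau^\vee_1 E_\gamma$ becomes the leading term $T^{-1}_{j-1}\cdots T^{-1}_{i^\ast}T_{i^\ast-1}\cdots T_1 E_\gamma$ plus, for each $a\in\{1,\dots,j-1\}$, a term $\frac{1-t}{1-q^{\mu_j}t^{v_\mu(j)-(j-1)}}\,t^{-\frac12(j-a)}\,T_{a-1}\cdots T_1 E_\gamma$, which carries the extra factor $q^{\mu_j}t^{v_\mu(j)-(j-1)}$ exactly when $a\ge i^\ast$.

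Then I would peel off a monomial from each term. Because $y$ is increasing on $\{1,\dots,j-1\}$ and $a\le j-1$, the Hecke word $T_{a-1}\cdots T_1=T_{c_a^{-1}}$ composes on the left with $T_y$ with additive length, so $T_yT_{c_a^{-1}}=T_{yc_a^{-1}}$; and $T_y$ times the mixed leading word collapses to the single basis element $T_{yc_j^{-1}}$, precisely because along the reduced word $s_{j-1}\cdots s_1$ the descending steps from $y$ are the $j-i^\ast$ passes of the value $y(j)$ over the larger $y(j-1),\dots,y(i^\ast)$ while the remaining $i^\ast-1$ are ascending. Rewriting each $T_wE_\gamma$ as a power of $t$ times $E^w_\gamma$ via \eqref{Ezmudefn}, then applying Proposition~\ref{stepbystep}(a) (legitimate since $\gamma_1=\mu_j\ne0$ and $(\gamma_2,\dots,\gamma_n,\gamma_1-1)=\nu$), gives $E^{yc_a^{-1}}_\gamma=x_{(yc_a^{-1})(1)}E^{yc_a^{-1}c_n}_\nu=x_{y(a)}E^{yc_a^{-1}c_n}_\nu$ and likewise $E^{yc_j^{-1}}_\gamma=x_{y(j)}E^{yc_j^{-1}c_n}_\nu$. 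This assembles into the asserted expansion, with $\mathrm{maj}^y_\mu(a)=\mu_j$ exactly for $a\ge i^\ast$, i.e. for $y(j)<y(a)$.

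The main obstacle will be the last step: recognizing that the choice $i=i^\ast$ is exactly what makes $T_y$ times the mixed word of Lemma~\ref{compressionlemma}(b) collapse to a single $T_{yc_j^{-1}}$, and then verifying that all accumulated powers of $t$ — from \eqref{Ezmudefn} applied to $E^y_\mu$ and to each $E^w_\gamma$, from the prefactor implicit in $E_\mu=(\mathrm{const})\tau^\vee_{j-1}\cdots\tau^\vee_1 E_\gamma$, and from the compression scalars $t^{-\frac12(j-a)}$ — collapse to exactly $t^{\mathrm{covid}^y_\mu(a)}$, with leading coefficient $1$. I expect this to boil down, via Proposition~\ref{wmuexplicit}(d)--(f), to the identities $v_{\pi\nu}^{-1}=c_j\,v_\mu^{-1}$ and $\ell(c_j\,v_\mu^{-1})=\ell(v_\mu^{-1})+(j-1)$ (the second because $v_\mu$ fixes $\{1,\dots,j-1\}$ pointwise), after which the verification is mechanical.
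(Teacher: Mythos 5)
Your proposal is correct and follows essentially the same route as the paper's proof: reduce $z$ to the sorted representative $y$ via \eqref{Tigivest}, write $E_\mu$ as a $t$-power times $\tau^\vee_{j-1}\cdots\tau^\vee_1E_\gamma$ with $\gamma=\pi\nu$, apply Lemma \ref{compressionlemma}(a) or (b) according to the insertion slot $i^\ast=\min\{r\,|\,y(r)>y(j)\}$, and peel off the monomials with Proposition \ref{stepbystep}(a). The $t$-power bookkeeping you defer is exactly what the paper carries out via $\mathrm{norm}^y_\mu(a)$, using the identities $v_\gamma^{-1}=c_jv_\mu^{-1}$ and $\ell(c_a^{-1}c_j)=j-a$ that you correctly identify.
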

\begin{proof}
Write $z=y\sigma$ with $\sigma\in S_{j-1}$ and $y$ minimal length in the coset $zS_{j-1}$.
Then
$y(j) = z(j)$ and, by the last identity in \eqref{Tigivest},
$$T_z E_\mu = T_yT_\sigma E_\mu = T_yt^{\frac12\ell(\sigma)}E_\mu,
\quad\hbox{so that}\quad
E^z_\mu = E^y_\mu.
$$
To control the spacing let $c_a = s_1\cdots s_{a-1}$ (which is an $a$-cycle in $S_n$) and let
$$d_{-\beta^\vee_{j-1}} = q^{\mu_j}t^{v_\gamma(1)-(j-1)}
= q^{\mu_j}t^{v_\mu(j)-(j-1)}
\quad\hbox{and}\qquad
t^{\frac12}f_{-\beta^\vee_{j-1}} = \frac{1-t}{1- q^{\mu_j}t^{v_\mu(j)-(j-1)}}.
$$
Let $\gamma = \pi\nu = (\mu_j, 0, \ldots, 0, \mu_{j+1}, \ldots, \mu_n)$ as in Lemma
\ref{compressionlemma} and note that $v_\gamma = v_\mu s_{j-1}\cdots s_1 = v_\mu c_j^{-1}$.
\hfil\break
If $y(j)>y(j-1)$ then $T_{yc_j^{-1}} = T_y T_{j-1}\cdots T_1$ and
using  Lemma \ref{compressionlemma}(a) gives
\begin{align*}
T_y \tau^\vee_{j-1} &\cdots \tau^\vee_1E_\gamma
= T_{yc^{-1}_j}E_\gamma 
+ t^{\frac12}f_{-\beta^\vee_{j-1}} \sum_{a=1}^{j-1} t^{-\frac12(j-a)} T_{yc^{-1}_a}E_\gamma,
\end{align*}
If $y(j)<y(j-1)$ then $T_{yc_j^{-1}} = T_y T^{-1}_{j-1}\cdots T^{-1}_i T_{i-1} \cdots T_1$ 
with $i=\min\{r\in \{1, \ldots, j-1\}\ |\ y(r)>y(j)\}$ gives
and using  Lemma \ref{compressionlemma}(b) gives
\begin{align*}
T_y \tau^\vee_{j-1} \cdots \tau^\vee_1E_\gamma
&= T_{yc^{-1}_j}E_\gamma
+ t^{\frac12}f_{-\beta^\vee_{j-1}} 
\sum_{a=i}^{j-1} t^{-\frac12(j-a)}d_{-\beta^\vee_{j-1}}  T_{yc^{-1}_a}E_\gamma \\
&\qquad\qquad + t^{\frac12}f_{-\beta^\vee_{j-1}}
\sum_{a=1}^{i-1} t^{-\frac12(j-a)}  T_{yc^{-1}_a}E_\gamma.
\end{align*}
For $a\in \{1, \ldots, j\}$, let
\begin{align*}
\mathrm{norm}^y_\mu(a)
&= 
(\ell(yc^{-1}_a c_j v^{-1}_\mu)-\ell(c_j v^{-1}_\mu)) - (\ell(yv^{-1}_\mu)-\ell(v^{-1}_\mu)-(j-1))  \\
&= 
(\ell(yc^{-1}_ac_jv^{-1}_\mu)-(\ell(v^{-1}_\mu)+j-1) - \ell(yv^{-1}_\mu) + (\ell(v^{-1}_\mu)+(j-1)) \\
&= 
\ell(yc^{-1}_ac_jv^{-1}_\mu) - \ell(yv^{-1}_\mu).
\end{align*}
With this notation, the identities
\begin{align*}
E^y_\mu 
&= t^{-\frac12(\ell(yv^{-1}_\mu)-\ell(v^{-1}_\mu)) } T_y E_\mu 
 = t^{-\frac12(\ell(yv^{-1}_\mu)-\ell(v^{-1}_\mu))}  
T_y (t^{\frac12}\tau^\vee_{j-1})\cdots (t^{\frac12}\tau^\vee_1)E_\gamma \\
&= t^{-\frac12(\ell(yv^{-1}_\mu)-\ell(v^{-1}_\mu)-(j-1))} 
T_y \tau^\vee_{j-1}\cdots \tau^\vee_1E_\gamma, \qquad\hbox{and} \\
E^{yc^{-1}_a}_\gamma 
&= t^{-\frac12(\ell(yc^{-1}_av^{-1}_\gamma)-\ell(v^{-1}_\gamma))} T_{yc^{-1}_a}E_\gamma
= t^{-\frac12(\ell(yc^{-1}_av^{-1}_\gamma)-\ell(v^{-1}_\gamma))} T_yT_{a-1}\cdots T_1 E_\gamma \\
&
= t^{-\frac12(\ell(yc^{-1}_a c_j v^{-1}_\mu)-\ell(c_j v^{-1}_\mu))} T_yT_{a-1}\cdots T_1 E_\gamma,
\qquad\hbox{for $a\in \{1, \ldots, j\}$,}
\end{align*}
then give
\begin{align*}
E^y_\mu 
&= t^{\frac12\mathrm{norm}^y_\mu(j)}
E^{yc^{-1}_j}_\gamma 
+ t^{\frac12}f_{-\beta^\vee_{j-1}} \sum_{a=1}^{j-1} t^{-\frac12(j-a)} 
t^{\frac12\mathrm{norm}^y_\mu(a)} E^{yc^{-1}_a}_\gamma, \qquad\hbox{if $y(j)>y(j-1)$, and}  \\
E^y_\mu 
&=  t^{\frac12\mathrm{norm}^y_\mu(a)} E^{yc^{-1}_j}_\gamma 
+ t^{\frac12} f_{-\beta^\vee_{j-1}} \sum_{a=i}^{j-1} 
d_{-\beta^\vee_{j-1}}  
t^{-\frac12(j-a)}  
t^{\frac12\mathrm{norm}^y_\mu(a)} E^{yc^{-1}_a}_\gamma  \\
&\qquad\qquad\qquad + t^{\frac12} f_{-\beta^\vee_{j-1}}  \sum_{a=1}^{i-1} t^{-\frac12(j-a)}
t^{\frac12\mathrm{norm}^y_\mu(a)} E^{yc^{-1}_a}_\gamma, \qquad\hbox{if $y(j)<y(j-1)$ and}
\end{align*}
$i=\min\{r\in \{1, \ldots, j-1\}\ |\ y(r)>y(j)\}$.

If $a=j$ then $\mathrm{norm}^y_\mu(j)=0$.  Since
$\ell(c^{-1}_ac_j) = \ell(s_{a-1}\cdots s_1 s_1\cdots s_{j-1}) = \ell(s_a\cdots s_{j-1}) = (j-a)$ 
then
$$\mathrm{norm}^y_\mu(a) - (j-a) = 
\ell(yc^{-1}_ac_jv^{-1}_\mu) - \ell(yv^{-1}_\mu) - \ell(c^{-1}_a c_j).$$
Applying Proposition \ref{stepbystep}(a) to the right hand side of the expressions that
have been obtained for $E^y_\mu$ and substtuting
$d_{-\beta^\vee_{j-1}} = q^{\mu_j}t^{v_\gamma(1)-(j-1)}
= q^{\mu_j}t^{v_\mu(j)-(j-1)}
= q^{\mu_j}t^{\frac12\cdot 2(v_\mu(j)-(j-1))}$ gives
\begin{align*}
E^z_\mu = E^y_\mu
&=  x_{y(j)} E^{yc^{-1}_j c_n}_\nu
+ \frac{(1-t)}{1-q^{\mu_j}t^{v_\mu(j)-(j-1) }}
\sum_{a=1}^{j-1} 
q^{\mathrm{maj}^y_\mu(a)}t^{\mathrm{covid}^y_\mu(a)} x_{y(a) }  E^{yc^{-1}_a c_n}_\nu,
\end{align*}
where, if $i= \min\{ r\in \{1, \ldots, j\}\ |\ y(r)\ge y(j)\}$ then
\begin{align*}
\mathrm{covid}^y_\mu(a) 
&= \begin{cases}
\hbox{$\frac12$}\mathrm{norm}^y_\mu(a)-\hbox{$\frac12$}(j-a), &\hbox{if $a<i$,} \\
v_\mu(j)-(j-1)+\hbox{$\frac12$}\mathrm{norm}^y_\mu(a)-\hbox{$\frac12$}(j-a), &\hbox{if $a\ge i$,} 
\end{cases}
\\
&= \begin{cases}
\hbox{$\frac12$}(\ell(yc^{-1}_ac_jv^{-1}_\mu) - \ell(yv^{-1}_\mu)-\ell(c^{-1}_ac_j)),  
&\hbox{if $y(j) > y(a)$,} \\
v_\mu(j)-(j-1)+\hbox{$\frac12$}(\ell(yc^{-1}_ac_jv^{-1}_\mu) - \ell(yv^{-1}_\mu)-\ell(c^{-1}_ac_j)),
&\hbox{if $y(j) < y(a)$.} 
\end{cases}
\end{align*}
\end{proof}

\begin{remark} \textbf{Relating $\mathrm{covid}^y_\mu(a)$ to coinversion triples.} \label{covidremark}
To give an alternate point of view on the statistic
\begin{align*}
\mathrm{covid}^y_\mu(a) 
&= \begin{cases}
\hbox{$\frac12$}(\ell(yc^{-1}_ac_jv^{-1}_\mu) - \ell(yv^{-1}_\mu)-\ell(c^{-1}_ac_j),  
&\hbox{if $y(j)>y(a)$,} \\
v_\mu(j)-(j-1) +\hbox{$\frac12$}(\ell(yc^{-1}_ac_jv^{-1}_\mu) - \ell(yv^{-1}_\mu)-\ell(c^{-1}_ac_j)),
&\hbox{if $y(j)< y(a)$,} 
\end{cases}
\end{align*}
which fell out of the computation in the proof of Proposition \ref{boxbybox},
let us analyze how the inversions of $yv^{-1}_\mu$ change when the factor $c^{-1}_ac_j$ is
inserted to form $yc^{-1}_ac_jv^{-1}_\mu$.  To do this note that
$$yc^{-1}_ac_jv^{-1}_\mu = yv^{-1}_\mu (s_{v_\mu(j)-1}s_{v_\mu(j)-2}\cdots s_j)
(s_a\cdots s_{j-2})s_{j-1}(s_j\cdots s_{v_\mu(j)-1}).$$
and analyze the effect of each of the factors on the right hand side.
\begin{enumerate}
\item[(a)] 
Since $y(a)<y(a+1)<\cdots < y(j-1)$ then $(s_a\cdots s_{j-2})$ creates $(j-1-a)$ inversions in 
$yc^{-1}_ac_jv^{-1}_\mu$ which do not occur in $xv^{-1}_\mu$.
\item[(b)] 
The factor $s_{j-1}$ creates an inversion if $y(j)>y(a)$ and removes an inversion if
$y(j)<y(a)$.
\item[(c)] The factor $(s_j\cdots s_{v_\mu(j)-1})$
\item[] \ \qquad undoes inversions $yv^{-1}_\mu(k) < yv^{-1}_\mu(a)$ for $k\in \{j, \ldots, v_\mu(j)-1\}$,
\item[] \ \qquad adds inversions $yv^{-1}_\mu(k) > yv^{-1}_\mu(a)$ for  $k\in \{j, \ldots, v_\mu(j)-1\}$,
\item[(d)] The factor $(s_{v_\mu(j)-1}s_{v_\mu(j)-2}\cdots s_j)$
\item[] \ \qquad undoes inversions $yv^{-1}_\mu(k) > yv^{-1}_\mu(v_\mu(j))$ for $k\in \{j, \ldots, v_\mu(j)-1\}$,
\item[] \ \qquad adds inversions $yv^{-1}_\mu(k) < yv^{-1}_\mu(v_\mu(j))$ for  $k\in \{j, \ldots, v_\mu(j)-1\}$,
\end{enumerate}
Thus, if $yv^{-1}_\mu(v_\mu(j)) > yv^{-1}_\mu(a)$ (so that $y(j)>y(a)$) then 
\begin{align*}
\ell(yc^{-1}_ac_jv^{-1}_\mu) &- \ell(yv^{-1}_\mu) - \ell(c^{-1}_ac_j)
=\ell(yc^{-1}_ac_jv^{-1}_\mu) - \ell(yv^{-1}_\mu) - (j-a) \\
&= (j-1-a)+1 \\
&\qquad
- (\#\{k\in \{j, \ldots, v_\mu(j)-1\}\ |\ yv^{-1}_\mu(k) < yv^{-1}_\mu(a) \}) \\
&\qquad 
+ (\#\{k\in \{j, \ldots, v_\mu(j)-1\}\ |\ yv^{-1}_\mu(k)>yv^{-1}_\mu(a)  \}) \\
&\qquad
- (\#\{k\in \{j, \ldots, v_\mu(j)-1\}\ |\ yv^{-1}_\mu(v_\mu(j)) < yv^{-1}_\mu(k)\}) \\
&\qquad
+ (\#\{k\in \{j, \ldots, v_\mu(j)-1\}\ |\ yv^{-1}_\mu(v_\mu(j)) > yv^{-1}_\mu(k) \}) \\
&\qquad - (j-a) \\
&= 
-(\#\{k\in \{j, \ldots, v_\mu(j)-1\}\ |\ yv^{-1}_\mu(k) < yv^{-1}_\mu(a)< yv^{-1}_\mu(v_\mu(j))\}) \\
&\qquad 
+(\#\{k\in \{j, \ldots, v_\mu(j)-1\}\ |\ yv^{-1}_\mu(a) < yv^{-1}_\mu(k) < yv^{-1}_\mu(v_\mu(j))\}) \\
&\qquad
+ (\#\{k\in \{j, \ldots, v_\mu(j)-1\}\ |\ yv^{-1}_\mu(a)< yv^{-1}_\mu(v_\mu(j)) < yv^{-1}_\mu(k)\}) \\
&\qquad
- (\#\{k\in \{j, \ldots, v_\mu(j)-1\}\ |\ yv^{-1}_\mu(a)< yv^{-1}_\mu(v_\mu(j)) < yv^{-1}_\mu(k)\}) \\
&\qquad 
+ (\#\{k\in \{j, \ldots, v_\mu(j)-1\}\ |\ yv^{-1}_\mu(a) < yv^{-1}_\mu(k) < yv^{-1}_\mu(v_\mu(j))\}) \\
&\qquad 
+ (\#\{k\in \{j, \ldots, v_\mu(j)-1\}\ |\ yv^{-1}_\mu(k) < yv^{-1}_\mu(a)< yv^{-1}_\mu(v_\mu(j))\}) \\
&= 
2\cdot (\#\{k\in \{j, \ldots, v_\mu(j)-1\}\ |\ yv^{-1}_\mu(a)<yv^{-1}_\mu(k) < yv^{-1}_\mu(v_\mu(j))\}).
\end{align*}
Then, if $yv^{-1}_\mu(v_\mu(j)) < yv^{-1}_\mu(a)$ (so that $y(j)<y(a)$) then
\begin{align*}
\ell(yc^{-1}_ac_jv^{-1}_\mu) &- \ell(yv^{-1}_\mu) - \ell(c^{-1}_ac_j)
=\ell(yc^{-1}_ac_jv^{-1}_\mu) - \ell(yv^{-1}_\mu) - (j-a) \\
&= (j-1-a)-1 \\
&\qquad
- (\#\{k\in \{j, \ldots, v_\mu(j)-1\}\ |\ yv^{-1}_\mu(k) < yv^{-1}_\mu(a) \}) \\
&\qquad 
+ (\#\{k\in \{j, \ldots, v_\mu(j)-1\}\ |\ yv^{-1}_\mu(a)<yv^{-1}_\mu(k)  \}) \\
&\qquad
- (\#\{k\in \{j, \ldots, v_\mu(j)-1\}\ |\ yv^{-1}_\mu(v_\mu(j)) < yv^{-1}_\mu(k)\}) \\
&\qquad
+ (\#\{k\in \{j, \ldots, v_\mu(j)-1\}\ |\ yv^{-1}_\mu(k) < yv^{-1}_\mu(v_\mu(j)) \}) \\
&\qquad - (j-a) \\
&= -1
-(\#\{k\in \{j, \ldots, v_\mu(j)-1\}\ |\ yv^{-1}_\mu(k) < yv^{-1}_\mu(v_\mu(j)) < yv^{-1}_\mu(a) \}) \\
&\qquad 
- (\#\{k\in \{j, \ldots, v_\mu(j)-1\}\ |\ yv^{-1}_\mu(v_\mu(j)) < yv^{-1}_\mu(k) < yv^{-1}_\mu(a) \}) \\
&\qquad
+ (\#\{k\in \{j, \ldots, v_\mu(j)-1\}\ |\ yv^{-1}_\mu(v_\mu(j)) < yv^{-1}_\mu(a)< yv^{-1}_\mu(k)\}) \\
&\qquad
- (\#\{k\in \{j, \ldots, v_\mu(j)-1\}\ |\ yv^{-1}_\mu(v_\mu(j)) < yv^{-1}_\mu(k) < yv^{-1}_\mu(a) \}) \\
&\qquad 
- (\#\{k\in \{j, \ldots, v_\mu(j)-1\}\ |\ yv^{-1}_\mu(v_\mu(j)) < yv^{-1}_\mu(a) < yv^{-1}_\mu(k) \}) \\
&\qquad 
+ (\#\{k\in \{j, \ldots, v_\mu(j)-1\}\ |\ yv^{-1}_\mu(k) < yv^{-1}_\mu(v_\mu(j)) < yv^{-1}_\mu(a) \}) \\
&= -2
- 2\cdot (\#\{k\in \{j, \ldots, v_\mu(j)-1\}\ |\ yv^{-1}_\mu(a)<yv^{-1}_\mu(k) < yv^{-1}_\mu(v_\mu(j))\}).
\end{align*}
If $y(j) > y(a)$ then
\begin{align*}
\hbox{$\frac12$}(\ell(yc^{-1}_ac_jv^{-1}_\mu) &- \ell(yv^{-1}_\mu) - \ell(c^{-1}_ac_j)) \\
&= \#\{k\in \{j, \ldots, v_\mu(j)-1\}\ |\ yv^{-1}_\mu(a)<yv^{-1}_\mu(k) < yv^{-1}_\mu(v_\mu(j))\} \\
&= 
\#\{k\in \{j, \ldots, v_\mu(j)-1\}\ |\ y(a) < yv^{-1}_\mu(k) < y(j) \}  \\
&= 
\#\{v_\mu(b)\in \{j, \ldots, v_\mu(j)-1 \}\ |\ y(a) < yv^{-1}_\mu(v_\mu(b)) < y(j) \}  \\
&= 
\#\{ b\in \{ v^{-1}_\mu(j), \ldots, v^{-1}_\mu(v_\mu(j)-1) \} \ |\ y(a) < y(b) < y(j) \}  
\end{align*}
and, 
if $y(j)<y(a)$ then
\begin{align*}
(v_\mu(j)&-(j-1)) + \hbox{$\frac12$} 
\big(\ell(yc^{-1}_ac_jv^{-1}_\mu) - \ell(yv^{-1}_\mu) - \ell(c^{-1}_ac_j)\big) \\
&= 
(v_\mu(j)-j+1) -1
- \#\{k\in \{j, \ldots, v_\mu(j)-1\}\ |\ yv^{-1}_\mu(a)<yv^{-1}_\mu(k) < yv^{-1}_\mu(v_\mu(j))\} \\
&= 
((v_\mu(j)-1)-(j-1))
- \#\{k\in \{j, \ldots, v_\mu(j)-1\}\ |\ yv^{-1}_\mu(a)<yv^{-1}_\mu(k) < yv^{-1}_\mu(v_\mu(j))\} \\
&= 
\#\Big\{k\in \{j, \ldots, v_\mu(j)-1\}\ \Big\vert \ 
\begin{array}{l}
\hbox{$yv^{-1}_\mu(k) < yv^{-1}_\mu(a) < yv^{-1}_\mu(v_\mu(j))$} \\
\hbox{ or $yv^{-1}_\mu(a) < yv^{-1}_\mu(v_\mu(j))  < yv^{-1}_\mu(k)$}
\end{array}
\Big\} \\
&= 
\#\Big\{b\in \{ v^{-1}_\mu(j), \ldots, v^{-1}_\mu(v_\mu(j)-1) \} \ \Big\vert \ 
\begin{array}{l}
\hbox{$y(b) < y(a) < y(j)$} \\
\hbox{ or $y(a) < y(j)  < y(b)$}
\end{array}
\Big\}. 
\end{align*}
These last two expressions are exactly the numbers of coninversion triples that appear
in \cite[Lemma 3.6.3]{HHL06} for the box $(j,1)$ filled with $y(a)$ in a filling of shape 
$\mu$ with basement $(y(1), \ldots, y(n))$.
\end{remark}


\section{Type $GL_n$ DAArt, DAHA and the polynomial representation}\label{GLnDAHAsection}

 The power tools that enable us to
construct and manipulate Macdonald polynomials with ease
are the polynomial generators $X_1, \ldots, X_n$, 
the Cherednik-Dunkl operators $Y_1, \ldots, Y_n$ and the intertwiners
$\tau^\vee_1, \ldots, \tau^\vee_{n-1}, \tau^\vee_\pi$ which all live inside the 
double affine Hecke algebra
$\tilde H_{GL_n}$.  In this section we will build the Macdonald polynomials $E_\mu$ by
first constructing the double affine Artin group $\tilde \cB_{GL_n}$, 
then the elements $X_1, \ldots, X_n$ and  $Y_1, \ldots, Y_n$,
then the DAHA $\tilde H_{GL_n}$ and 
the intertwiners $\tau^\vee_1, \ldots, \tau^\vee_{n-1}, \tau^\vee_\pi$.
Let us begin by defining the DAArt $\tilde \cB_{GL_n}$ and establishing its primary dualities.
The definition is by generators and relations and the dualities are automorphisms of 
$\tilde \cB_{GL_n}$.  The double affine Hecke algebra $\tilde H_{GL_n}$
is constructed as a quotient of the 
group algebra of $\tilde \cB_{GL_n}$ by the Hecke relations $T_i^2 = (t^{\frac12}-t^{-\frac12})T_i +1$. 

Use Coxeter diagram shorthand for relations so that

\begin{equation*}
\begin{tikzpicture}
	\draw (0,0)--(1,0);
	\draw[fill=white] (0,0) circle (2.5pt) node[above=1pt] {\small $a$};
	\draw[fill=white] (1,0) circle (2.5pt) node[above=1pt] {\small $b$}; 
\end{tikzpicture}
\quad  \hbox{indicates $aba=bab$,\quad and\qquad}
\begin{tikzpicture}
	\draw[fill=white] (0,0) circle (2.5pt) node[above=1pt] {\small $a$};
	\draw[fill=white] (1,0) circle (2.5pt) node[above=1pt] {\small $b$}; 
\end{tikzpicture}
\quad\hbox{indicates $ab=ba$,} 
\end{equation*}

\subsection{The type $GL_n$ double affine Artin group (DAArt)}


The element $q$ will be a parameter in the Macdonald polynomials.  In the definition
of the DAArt by generators and relations the elememt $q$ 
appears as a central element of the group, but in Section \ref{GLnpolyderiv} 
the element $q$ will get specialized to be a complex parameter.

The \emph{type $GL_n$ double affine Artin group (DAArt)} $\tilde \cB_{GL_n}$
is generated by $q, g^\vee, g, S_0^\vee, S_0, T_1,\ldots, T_{n-1}$
with the relations 
\begin{equation}
\begin{matrix}
\begin{matrix}\begin{tikzpicture}[every node/.style={inner sep=1}, scale=1.0]
	\foreach \x/\y in {1/1,2/2,4/{n\!-\!2},5/{n\!-\!1}}{
		\node[wV, label=below:{$T_{\y}$}] (\x) at (\x,0) {};}
	\node[wV, label=above:{$S_0$}] (0) at (3,1) {};
	\draw (1) to (2) (4) to (5);
	\draw [bend left=15] (1) to (0) (0) to (5);
	\draw[dashed] (2) to (4);
	\end{tikzpicture}\end{matrix}
%
\quad
&\begin{array}{c}
gS_0g^{-1} = T_1, \\ \ \\
gT_ig^{-1} = T_{i+1},
\qquad
g T_{n-1} g^{-1} = S_0,
\end{array}
\\
\begin{matrix}\begin{tikzpicture}[every node/.style={inner sep=1}, scale=1.0]
	\foreach \x/\y in {1/1,2/2,4/{n\!-\!2},5/{n\!-\!1}}{
		\node[wV, label=below:{$T_{\y}$}] (\x) at (\x,0) {};}
	\node[wV, label=above:{$S^\vee_0$}] (0) at (3,1) {};
	\draw (1) to (2) (4) to (5);
	\draw [bend left=15] (1) to (0) (0) to (5);
	\draw[dashed] (2) to (4);
	\end{tikzpicture}\end{matrix}
\quad
&\begin{array}{c}
g^\vee S_0^\vee (g^\vee)^{-1} = T_1, \\ \ \\
g^\vee T_i (g^\vee)^{-1} = T_{i+1},
\qquad
g^\vee T_{n-1} (g^\vee)^{-1} = S^\vee_0,
\end{array}
\end{matrix}
\label{ggveecyclicDAArtrel}
\end{equation}
$$q\in Z(\widetilde{\cB}_{GL_n}) \qquad\hbox{and}$$
\begin{equation}
T_1 g^\vee g = g g^\vee T_{n-1}^{-1}
\qquad\hbox{and}\qquad
T_{n-1}^{-1}\cdots T_1^{-1}g(g^\vee)^{-1} 
= q(g^\vee)^{-1} gT_{n-1}\cdots T_1.
\label{ggveecommutationDAArtrel}
\end{equation}
for $i\in \{1,\ldots, n-2\}$.

The two visible symmetries in this defnition, switching the Coxeter diagram containing $S_0$ and
the Coxeter diagram containing $S_0^\vee$, and flipping the Coxeter diagrams about the middle,
form two important dualities.  These dualities are expressed as involutive automorphisms
of the DAArt $\tilde \cB_{GL_n}$.

\begin{thm} \label{dualitythm} 
\item[(a)] ($\vee$Duality)  
There is an involution $\iota\colon \tilde\cB_{GL_n}\to \tilde \cB_{GL_n}$ with
$$\iota(q) = q^{-1}, \qquad \iota(T_i)= T_i^{-1}, \qquad \iota(S_0^\vee) = S_0^{-1},\qquad
\iota(g) = g^\vee.
$$
\item[(b)] ( $\vdash\dashv$Duality) There is an involution $\eta\colon \tilde\cB_{GL_n}\to \tilde B_{GL_n}$ with
$$\eta(q) = q, \qquad \eta(T_i) = T_{n-i}, \qquad \eta(g) = g^{-1}, \qquad \eta(g^\vee) = (g^\vee)^{-1}.
$$
\end{thm}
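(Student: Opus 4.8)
The plan is, for each of the two maps, to (i) check that the assignment on generators extends to a homomorphism $\tilde\cB_{GL_n}\to\tilde\cB_{GL_n}$ --- i.e.\ that the proposed images satisfy \emph{all} the defining relations (the braid/commutation relations encoded by the two Coxeter diagrams, the conjugation relations in \eqref{ggveecyclicDAArtrel}, the centrality of $q$, and the two relations \eqref{ggveecommutationDAArtrel}) --- and (ii) check that the resulting endomorphism squares to the identity on generators. Then (ii) forces that endomorphism to be a bijection equal to its own inverse, hence an involutive automorphism. First I would record the map on every generator, since the statement specifies $\iota$ only on $q,T_i,S_0^\vee,g$ and $\eta$ only on $q,T_i,g,g^\vee$: requiring an involution forces $\iota(g^\vee)=g$ and $\iota(S_0)=(S_0^\vee)^{-1}$, and $\eta(S_0)=S_0$, $\eta(S_0^\vee)=S_0^\vee$ --- the last two are also exactly what the relations $gS_0g^{-1}=T_1$ and $gT_{n-1}g^{-1}=S_0$ (and their $\vee$-analogues) demand of any relation-preserving map.

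The checks on \eqref{ggveecyclicDAArtrel}, the Coxeter-diagram relations, and the centrality of $q$ are structural. The one fact used repeatedly is that a braid relation $aba=bab$ or a commutation $ab=ba$ remains valid after inverting every generator appearing in it and after relabelling the generators by a symmetry of the Coxeter graph. The map $\iota$ sends each generator of the ``$g$-data'' $g,S_0,T_1,\ldots,T_{n-1}$ to the inverse of the corresponding generator of the ``$g^\vee$-data'' $g^\vee,S_0^\vee,T_1,\ldots,T_{n-1}$, and vice versa; hence it carries the $g$-block of \eqref{ggveecyclicDAArtrel} together with the $S_0$-Coxeter relations onto the $g^\vee$-block together with the $S_0^\vee$-Coxeter relations (after taking inverses of both sides), and symmetrically. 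The map $\eta$ realizes on each Coxeter diagram the graph automorphism $T_i\mapsto T_{n-i}$ fixing $S_0$ and $S_0^\vee$; this swaps the two ``endpoint'' conjugation relations $gS_0g^{-1}=T_1$ and $gT_{n-1}g^{-1}=S_0$ (resp.\ their $\vee$-versions) and fixes the ``interior'' relations $gT_ig^{-1}=T_{i+1}$. Centrality of $q$ survives because $\iota(q)=q^{-1}$ and $\eta(q)=q$ are still central.

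The delicate part, which I expect to be the main obstacle, is verifying the two commutation relations \eqref{ggveecommutationDAArtrel}. The plan is to apply $\iota$ (resp.\ $\eta$) to each side, carefully reversing products such as $T_{n-1}^{-1}\cdots T_1^{-1}$ and keeping track of the positions of $g$ and $g^\vee$, and then to identify the resulting word identity with the inverse of the original relation (moving the central scalar $q^{\pm1}$ past the word). Concretely: the first relation rewrites as $T_1^{-1}gg^\vee=g^\vee g\,T_{n-1}$, and applying $\iota$ to this equation returns $T_1 g^\vee g=gg^\vee T_{n-1}^{-1}$, the same relation, while applying $\eta$ returns its inverse; the second relation rewrites as $g(g^\vee)^{-1}=q\,(T_1\cdots T_{n-1})(g^\vee)^{-1}g\,(T_{n-1}\cdots T_1)$, whose inverse (using that $q$ is central) coincides with the $\iota$-image, and, via the relabelling $T_i\mapsto T_{n-i}$, with the $\eta$-image, of that relation. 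This string-reversal bookkeeping is routine but is exactly where order errors are easy to make.

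Finally I would confirm $\iota^2=\id$ and $\eta^2=\id$ on generators, which is immediate: $\iota^2(g)=\iota(g^\vee)=g$, $\iota^2(S_0)=\iota((S_0^\vee)^{-1})=S_0$, $\iota^2(T_i)=T_i$, $\iota^2(q)=q$, and $\eta^2(T_i)=T_{n-(n-i)}=T_i$, $\eta^2(g)=g$, $\eta^2(g^\vee)=g^\vee$, $\eta^2(S_0)=S_0$, $\eta^2(S_0^\vee)=S_0^\vee$, $\eta^2(q)=q$. Combined with the relation checks, this shows $\iota$ and $\eta$ are well-defined involutive automorphisms of $\tilde\cB_{GL_n}$, which are parts (a) and (b) of Theorem~\ref{dualitythm}.
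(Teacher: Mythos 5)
Your proposal is correct and follows essentially the same route as the paper: both verify that the proposed images of the generators preserve the defining relations \eqref{ggveecyclicDAArtrel} and \eqref{ggveecommutationDAArtrel} (the Coxeter-diagram relations and centrality of $q$ being structural, and the two commutation relations reducing to the originals after taking inverses and moving the central $q^{\pm1}$), so that the assignments extend to homomorphisms which are visibly involutive. Your additional explicit bookkeeping --- pinning down $\iota(S_0)=(S_0^\vee)^{-1}$, $\eta(S_0)=S_0$, $\eta(S_0^\vee)=S_0^\vee$ and checking $\iota^2=\eta^2=\mathrm{id}$ on generators --- is consistent with what the paper leaves implicit.
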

\begin{proof}  (a) Applying $\iota$ to the relations in  \eqref{ggveecyclicDAArtrel}
switches the upper (nonchecked) relations with the lower (checked) relations.
Applying $\iota$ to the relations in \eqref{ggveecommutationDAArtrel}
produces the relations $q^{-1}\in Z(\widetilde{\cB}_{GL_n})$,
$$T_1^{-1}gg^\vee = g^\vee g T_{n-1}
\quad\hbox{and}\quad
T_{n-1}\cdots T_1g^\vee g^{-1} = q^{-1}g^{-1}g^\vee T_{n-1}^{-1}\cdots T_1^{-1},$$
respectively.
Thus the relations  in \eqref{ggveecommutationDAArtrel} are preserved under $\iota$.

\smallskip\noindent
(b) The involution $\eta$ preserves the relations in  \eqref{ggveecyclicDAArtrel}.
Applying $\eta$ to the relations in \eqref{ggveecommutationDAArtrel}
produces the relations $q\in Z(\widetilde{\cB}_{GL_n})$,
$$T_{n-1}(g^\vee)^{-1}g^{-1} = g^{-1}(g^\vee)^{-1}T_1^{-1}
\quad\hbox{and}\quad
T_1^{-1}\cdots T_{n-1}^{-1} g^{-1}g^\vee
= q g^\vee g^{-1}T_1\cdots T_{n-1}
$$
which are equivalent to the original relations in \eqref{ggveecommutationDAArtrel}
by taking inverses.
\end{proof}

\subsection{The elements $X^{\varepsilon_1}, \ldots, X^{\varepsilon_n}$
and $Y^{\varepsilon_1^\vee}, \ldots, Y^{\varepsilon_n^\vee}$}

The elements $X^{\varepsilon_1}, \ldots, X^{\varepsilon_n}$ will be used as the 
generators for a polynomial ring (inside the group algebra of $\tilde \cB_{GL_n}$),
and the Macdonald polynomials are polynomials in these variables.  Inside the DAArt,
these elements form a large commuttative subgroups and, because of duality,
there is \emph{another} large commutative subgroup generated by elements
$Y^{\varepsilon^\vee_1}, \ldots, Y^{\varepsilon^\vee_n}$.
%
In this section we define these elements and give alternate presentations of
$\tilde \cB_n$ in terms of these elements.

Define $Y^{\varepsilon_1^\vee}, \ldots, Y^{\varepsilon_n^\vee}$ 
and $X^{\varepsilon_1}, \ldots, X^{\varepsilon_n}$ in $\tilde \cB_{GL_n}$ 
by
\begin{equation}\label{GLXYdefn}
\begin{array}{lcl}
Y^{\varepsilon_1^\vee} = gT_{n-1}\cdots T_1
&\qquad\hbox{and}\qquad 
&Y^{\varepsilon_{j+1}^\vee} = T_j^{-1}Y^{\varepsilon_j^\vee}T_j^{-1},
\\ 
X^{\varepsilon_1} = g^\vee T_{n-1}^{-1}\cdots T_1^{-1},
&\quad\hbox{and}\quad
&X^{\varepsilon_{j+1}} = T_j X^{\varepsilon_j}T_j,
\end{array}
\end{equation}
for $j\in \{1, \ldots, n-1\}$.  If $\iota\colon \tilde \cB_{GL_n}\to \tilde \cB_{GL_n}$ 
and $\eta\colon \tilde \cB_{GL_n}\to \tilde \cB_{GL_n}$ are the 
involutions in Theorem \ref{dualitythm} then
\begin{equation}
\iota(X^{\varepsilon_i}) = Y^{\varepsilon_i}
\qquad\hbox{and}\qquad
\eta(X^{\varepsilon_i}) = X^{-\varepsilon_{n-i+1}}
\quad\hbox{and}\quad
\eta(Y^{\varepsilon_i}) = Y^{-\varepsilon_{n-i+1}},
\label{dualitiesonXY}
\end{equation}
for $i\in \{1, \ldots, n\}$.

The subgroup generated by $g^\vee, T_1, \ldots, T_{n-1}$ has a pictorial representation given by
\begin{equation*}
{ 
\def\TOP{2}\def\K{6}
g^\vee =
\TikZ{[scale=.5]
		\Pole[\K+.85][0,.5]
		\Over[6,0][\K+1.3,0.5]
		\Under[\K+1.3,0.5][1,2]
		\Pole[\K+.85][.5,2]
		 \foreach \x in {1,...,5} {\draw[thin, style=over] (0,\x) -- (2,\x+1);}
		\Caps[\K+.85][0,\TOP][\K]
}}
\qquad\hbox{and}\qquad
{\def\TOP{2} \def\K{6}
T_i =
\TikZ{[scale=.5]
	\Pole[\K+.85][0,2][\K]
	\Under[4,0][3,2]
	\Over[3,0][4,2]
	 \foreach \x in {1,2,5,\K} {
		 \draw[thin] (0,\x) -- (\TOP,\x);
		 }
	\Caps[\K+.85][0,\TOP][\K]
	\Label[0,\TOP][3][\footnotesize $i+1$]
	\Label[0,\TOP][4][\footnotesize$i$]
}
\qquad \text{for $i=1, \dots, n-1$,}
}
\end{equation*}
\begin{equation}
\hbox{so that}\qquad
{ 
\def\TOP{2}\def\K{6}
X^{\varepsilon_i}  = T_{i-1} \cdots T_1 g^\vee  T_{n-1}^{-1}\cdots T_i^{-1}
=
\TikZ{[scale=.5]
		\Pole[\K+.85][0,1]
		 \foreach \x in {1,2} {\draw[thin] (0,\x) -- (1,\x);}
		\foreach \x in {4,...,\K} {\draw[thin] (0,\x) -- (1,\x);}
		\Over[3,0][\K+1.3,1]
		\Under[\K+1.3,1][3,2]
		\Pole[\K+.85][1,2]
		\foreach \x in {1,2} { \draw[thin, style=over] (1,\x) -- (2,\x); }
		\foreach \x in {4,...,\K} {\draw[thin, style=over] (1,\x) -- (\TOP,\x);}
		\Caps[\K+.85][0,\TOP][\K]
		\Label[0,\TOP][3][{\footnotesize $i$}]
}\quad\hbox{for $i\in \{1, \ldots,n\}$.}
}\end{equation}
Use the notation
$X_i = X^{\varepsilon_i}$,
$$\hbox{and let}\qquad
X^\mu = X_1^{\mu_1}\cdots X_n^{\mu_n} = X^{\mu_1\varepsilon_1+\cdots+\mu_n\varepsilon_n},
\qquad
\hbox{for $\mu = (\mu_1, \ldots, \mu_n)\in \ZZ^n$.}
$$
Similarly use the notation
$Y_i = Y^{\varepsilon^\vee_i}$,
$$\hbox{and let}\qquad
Y^{\lambda^\vee} = Y_1^{\lambda_1}\cdots X_n^{\lambda_n} 
= Y^{\lambda_1\varepsilon^\vee_1+\cdots+\lambda_n\varepsilon^\vee_n},
\qquad
\hbox{for $\lambda^\vee = (\lambda_1, \ldots, \lambda_n)\in \ZZ^n$.}
$$
The pictorial representation provides an easy check of the relations
\begin{align}
&(g^\vee)^n = X^{\varepsilon_1+\cdots+\varepsilon_n}
\quad\hbox{and}\quad
X^{\varepsilon_i}X^{\varepsilon_j} = X^{\varepsilon_j}X^{\varepsilon_i},
\label{Xommutation}
\\
\hbox{and similarly,}\qquad
&g^n = Y^{\varepsilon_1^\vee+\cdots+\varepsilon_n^\vee}
\quad\hbox{and}\quad
Y^{\varepsilon_i^\vee}Y^{\varepsilon_j^\vee}
= Y^{\varepsilon_j^\vee}Y^{\varepsilon_i^\vee},
\label{Ycommutation}
\end{align}
for $i,j\in \{1,\ldots, n\}$.  The pictorial perspective also verifies the relations
\begin{equation}
S_0T_{s_\varphi} = Y^{\varepsilon_1^\vee-\varepsilon_n^\vee}
\quad\hbox{and}\quad
(S_0^\vee)^{-1}T^{-1}_{s_\varphi} = X^{\varepsilon_1-\varepsilon_n},
\quad\hbox{where}\quad
T_{s_\varphi} = T_{n-1}\cdots T_1\cdots T_{n-1}.
\label{S0sphi}
\end{equation}

\begin{thm} \label{BGLaltpres}  
\item[(a)]The group $\tilde \cB_{GL_n}$ is presented by generators $q, g^\vee, S_0^\vee, T_1,\ldots, T_{n-1}$ and
$Y^{\varepsilon_1^\vee}, \ldots, Y^{\varepsilon_n^\vee}$ \hfil\break
and relations
$$
\begin{matrix}\begin{tikzpicture}[every node/.style={inner sep=1}, scale=1.0]
	\foreach \x/\y in {1/1,2/2,4/{n\!-\!2},5/{n\!-\!1}}{
		\node[wV, label=below:{$T_{\y}$}] (\x) at (\x,0) {};}
	\node[wV, label=above:{$S_0^\vee$}] (0) at (3,1) {};
	\draw (1) to (2) (4) to (5);
	\draw [bend left=15] (1) to (0) (0) to (5);
	\draw[dashed] (2) to (4);
	\end{tikzpicture}\end{matrix}
\quad
g^\vee T_i (g^\vee)^{-1} = T_{i+1},
\quad
g^\vee T_{n-1} (g^\vee)^{-1} = S_0^\vee,
$$
\begin{equation}\label{TY0GL}
q \in Z(\tilde \cB_{GL_n}),
\qquad
Y^{\varepsilon_k^\vee}Y^{\varepsilon_j^\vee}
= Y^{\varepsilon_j^\vee}Y^{\varepsilon_k^\vee}
\ \hbox{ for $k,j\in \{1, \ldots, n\}$,}
\end{equation}
\begin{equation}\label{TY1GL}
Y^{\varepsilon_{i+1}^\vee} = T_i^{-1}Y^{\varepsilon_i^\vee}T_i^{-1},
\quad\hbox{and}\quad
T_iY^{\varepsilon_j^\vee} = Y^{\varepsilon_j^\vee}T_i
\ \hbox{for $i\in\{1,\ldots, n-1\}$ and $j\ne i, i+1$,}
\end{equation}
\begin{equation}\label{TY2GL}
g^\vee Y^{\varepsilon_i^\vee} (g^\vee)^{-1} = Y^{\varepsilon_{i+1}^\vee}\  
\hbox{for $i\in \{1,\ldots, n-1\}$,}
\quad\hbox{and}\quad
g^\vee Y^{\varepsilon_n^\vee} (g^\vee)^{-1}= q Y^{\varepsilon_1^\vee},
\end{equation}
\item[(b)]The group $\tilde \cB_{GL_n}$ is presented by generators 
$q, g, T_0, T_1,\ldots, T_{n-1}$ and $X^{\varepsilon_1}, \ldots, X^{\varepsilon_n}$
\hfil\break
and relations
$$
\begin{matrix}\begin{tikzpicture}[every node/.style={inner sep=1}, scale=1.0]
	\foreach \x/\y in {1/1,2/2,4/{n\!-\!2},5/{n\!-\!1}}{
		\node[wV, label=below:{$T_{\y}$}] (\x) at (\x,0) {};}
	\node[wV, label=above:{$S_0$}] (0) at (3,1) {};
	\draw (1) to (2) (4) to (5);
	\draw [bend left=15] (1) to (0) (0) to (5);
	\draw[dashed] (2) to (4);
	\end{tikzpicture}\end{matrix}
\quad
gT_ig^{-1} = T_{i+1},
\quad g T_{n-1}g^{-1} = S_0,
$$
\begin{equation}\label{BGLXcomm}
q \in Z(\tilde \cB_{GL_n}),
\qquad
X^{\varepsilon_k}X^{\varepsilon_j}
= X^{\varepsilon_j}X^{\varepsilon_k}
\ \hbox{for $k,j\in \{1, \ldots, n\}$,}
\end{equation}
\begin{equation}\label{TX2}
X^{\varepsilon_{i+1}} = T_iX^{\varepsilon_i}T_i,
\quad\hbox{and}\quad
T_iX^{\varepsilon_j} = X^{\varepsilon_j}T_i,
\ \hbox{for $i\in \{1,\ldots, n-1\}$ and $j\ne i, i+1$,}
\end{equation}
\begin{equation}
gX^{\varepsilon_i}g^{-1} = X^{\varepsilon_{i+1}},
\ \ \hbox{for $i\in \{1,2,\ldots, n-1\}$\quad and}
\qquad
gX^{\varepsilon_n}g^{-1} = q^{-1}X^{\varepsilon_1}.
\end{equation}
\end{thm}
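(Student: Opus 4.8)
The plan is to prove (a) by exhibiting mutually inverse group homomorphisms between $\tilde\cB_{GL_n}$ and the group $\tilde\cB'$ abstractly presented by the generators and relations listed in (a), and then to obtain (b) from (a) by transporting the presentation through the $\vee$-duality involution $\iota$ of Theorem \ref{dualitythm}. Throughout, the elements $X^{\varepsilon_i}$, $Y^{\varepsilon_i^\vee}$ are the ones defined in \eqref{GLXYdefn}, and I will freely use the pictorially verified identities \eqref{Xommutation}, \eqref{Ycommutation}, \eqref{S0sphi}.

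\smallskip\noindent
First I would define $\Phi\colon \tilde\cB' \to \tilde\cB_{GL_n}$ by sending $q, g^\vee, S_0^\vee, T_1, \ldots, T_{n-1}$ to themselves and $Y^{\varepsilon_i^\vee}$ to the element of $\tilde\cB_{GL_n}$ of the same name, and check that every relation of (a) holds in $\tilde\cB_{GL_n}$: the $S_0^\vee$-Coxeter relations, the relations $g^\vee T_i (g^\vee)^{-1}=T_{i+1}$, $g^\vee T_{n-1}(g^\vee)^{-1}=S_0^\vee$, and centrality of $q$ are among the defining relations of $\tilde\cB_{GL_n}$; the commutativity \eqref{TY0GL} is \eqref{Ycommutation}; the relations \eqref{TY1GL} are the definition \eqref{GLXYdefn} together with the (pictorial, or braid-theoretic) fact that $T_i$ commutes with $Y^{\varepsilon_j^\vee}$ for $j\ne i,i+1$; and \eqref{TY2GL} follows from \eqref{ggveecommutationDAArtrel} after rewriting $Y^{\varepsilon_1^\vee}=gT_{n-1}\cdots T_1$ and (by iterating \eqref{TY1GL}) $Y^{\varepsilon_n^\vee}=T_{n-1}^{-1}\cdots T_1^{-1}g$.

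\smallskip\noindent
Next I would define $\Psi\colon \tilde\cB_{GL_n}\to \tilde\cB'$ fixing $q, g^\vee, S_0^\vee, T_i$, setting $\Psi(g)=Y^{\varepsilon_1^\vee}T_1^{-1}T_2^{-1}\cdots T_{n-1}^{-1}$ (inverting $Y^{\varepsilon_1^\vee}=gT_{n-1}\cdots T_1$), and setting $\Psi(S_0)=\Psi(g)\,T_{n-1}\,\Psi(g)^{-1}$; then I must check that the defining relations \eqref{ggveecyclicDAArtrel}--\eqref{ggveecommutationDAArtrel} hold in $\tilde\cB'$. The relation $gT_{n-1}g^{-1}=S_0$ holds by the choice of $\Psi(S_0)$; the remaining $S_0$-diagram relations $gT_ig^{-1}=T_{i+1}$ ($i\le n-2$), $gS_0g^{-1}=T_1$, and the $S_0$-Coxeter relations amount to the assertion that $Y^{\varepsilon_1^\vee}(T_{n-1}\cdots T_1)^{-1}$ is the rotation generator for the $S_0$-affine diagram, i.e.\ the classical equivalence between the Coxeter and Bernstein presentations of the extended affine braid group of type $GL_n$, which one derives from \eqref{TY0GL}--\eqref{TY2GL} and the affine braid relations on $T_1,\ldots,T_{n-1},S_0^\vee$. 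For \eqref{ggveecommutationDAArtrel}: substituting $\Psi(g)$ and telescoping $T_{n-1}^{-1}\cdots T_1^{-1}Y^{\varepsilon_1^\vee}T_1^{-1}\cdots T_{n-1}^{-1}=Y^{\varepsilon_n^\vee}$ via \eqref{TY1GL}, the second relation in \eqref{ggveecommutationDAArtrel} reduces to exactly $g^\vee Y^{\varepsilon_n^\vee}(g^\vee)^{-1}=qY^{\varepsilon_1^\vee}$, the second relation in \eqref{TY2GL}; the first relation in \eqref{ggveecommutationDAArtrel} reduces, using $g^\vee Y^{\varepsilon_1^\vee}(g^\vee)^{-1}=Y^{\varepsilon_2^\vee}$, relation \eqref{TY1GL}, and $g^\vee T_{n-1}^{-1}(g^\vee)^{-1}=(S_0^\vee)^{-1}$, to the tautology $g(S_0^\vee)^{-1}g^\vee=g(S_0^\vee)^{-1}g^\vee$, hence is automatic. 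Since $\Phi$ and $\Psi$ are then visibly mutually inverse on generating sets, $\Phi$ is an isomorphism and (a) follows.

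\smallskip\noindent
Finally, (b) is deduced by applying $\iota$ to the presentation in (a): by Theorem \ref{dualitythm}(a) and \eqref{dualitiesonXY} one has $\iota(q)=q^{-1}$, $\iota(g^\vee)=g$, $\iota(S_0^\vee)=S_0^{-1}$, $\iota(T_i)=T_i^{-1}$ and $\iota(Y^{\varepsilon_i^\vee})=X^{\varepsilon_i}$, so $\iota$ carries the generating set and every relation of (a) to the generating set and relations of (b) (replacing certain generators by their inverses, which does not change the presented group), and since $\iota$ is an automorphism of $\tilde\cB_{GL_n}$ this yields (b). \textbf{The main obstacle} I expect is the $\Psi$-side Coxeter-vs-Bernstein step --- verifying carefully, from the relations \eqref{TY0GL}--\eqref{TY2GL} alone, that the substituted element $Y^{\varepsilon_1^\vee}(T_{n-1}\cdots T_1)^{-1}$ satisfies all the $S_0$-diagram rotation and braid relations --- together with the bookkeeping that shows the two coupling relations \eqref{ggveecommutationDAArtrel} translate into \eqref{TY2GL} exactly, with no residual conditions.
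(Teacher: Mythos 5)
Your proposal is correct and follows essentially the same route as the paper: both directions amount to checking that each set of relations is derivable from the other (with $g=Y^{\varepsilon_1^\vee}T_1^{-1}\cdots T_{n-1}^{-1}$ and $S_0=gT_{n-1}g^{-1}$ on the way back), and part (b) is obtained by applying the involution $\iota$ to part (a). The one step you defer to the ``classical Coxeter--Bernstein equivalence'' --- verifying $gT_ig^{-1}=T_{i+1}$, $gT_{n-1}g^{-1}=S_0$, $gS_0g^{-1}=T_1$ from \eqref{TY0GL}--\eqref{TY2GL} --- is exactly the computation the paper carries out explicitly, and your reductions of the two coupling relations \eqref{ggveecommutationDAArtrel} to \eqref{TY2GL} check out.
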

\begin{proof}  The proof is by showing that the relations in \eqref{TY0GL}, \eqref{TY1GL}
and \eqref{TY2GL} follow from the defining relations of $\tilde \cB_{GL_n}$, and vice versa.

\smallskip\noindent
\eqref{ggveecyclicDAArtrel}\&\eqref{ggveecommutationDAArtrel} 
$\Longrightarrow$ (a):  Using \eqref{GLXYdefn} to 
define $Y^{\varepsilon_i^\vee}$, the pictorial perspective
establishes the relations in 
\eqref{TY0GL} and \eqref{TY1GL}.  The proof of the relations in \eqref{TY2GL} is completed by
\begin{align*}
g^\vee Y^{\varepsilon_1^\vee}
&= g^\vee g T_{n-1}\cdots T_1
= T_1^{-1} g g^\vee T_{n-2}\cdots T_1
= T_1^{-1}g T_{n-1}\cdots T_2g^\vee \\
&= T_1^{-1}g T_{n-1}\cdots T_2T_1T_1^{-1}g^\vee
= T_1^{-1}Y^{\varepsilon_1^\vee}T_1^{-1}g^\vee
=Y^{\varepsilon_2^\vee}g^\vee, \\
\\
g^\vee Y^{\varepsilon_i^\vee} (g^\vee)^{-1}
&= g^\vee T_{i-1}^{-1}\cdots T_1^{-1}Y^{\varepsilon_1^\vee}
T_1^{-1}\cdots T_{i-1}^{-1} (g^\vee)^{-1} \\
&= T_i^{-1}\cdots T_2^{-1}Y^{\varepsilon_2^\vee} T_2^{-1}\cdots T_i^{-1} 
= Y^{\varepsilon_{i+1}^\vee}, 
\qquad\hbox{and} \\
\\
g^\vee Y^{\varepsilon_n^\vee} (g^\vee)^{-1}
&=g^\vee T_{n-1}^{-1}\cdots T_1^{-1} Y^{\varepsilon_1^\vee}
T_1^{-1}\cdots T_{n-1}^{-1} (g^\vee)^{-1}  \\
&= g^\vee T_{n-1}^{-1}\cdots T_1^{-1}
g T_{n-1}\cdots T_1 T_1^{-1}\cdots T_{n-1}^{-1} (g^\vee)^{-1} 
= g^\vee T_{n-1}^{-1}\cdots T_1^{-1} g  (g^\vee)^{-1} \\
& = qg^\vee (g^\vee)^{-1} g  T_{n-1}\cdots T_1 
=qY^{\varepsilon_1^\vee}.
\end{align*}
(a) $\Rightarrow$ \eqref{ggveecyclicDAArtrel}\&\eqref{ggveecommutationDAArtrel}:  
Use \eqref{S0sphi} to define $g$ and $S_0$ in terms of $T_i$ and $Y^{\varepsilon_i^\vee}$s.
If $i\in \{1, \ldots, n-2\}$ then
\begin{align*}
gT_i g^{-1} 
&= Y^{\varepsilon_1^\vee} T_1^{-1} \cdots T_{n-1}^{-1} T_i
T_{n-1}\cdots T_1 Y^{-\varepsilon^\vee_1} 
= Y^{\varepsilon_1^\vee} T_1^{-1} \cdots T_{i+1}^{-1} T_i
T_{i+1}\cdots T_1 Y^{-\varepsilon^\vee_1} \\
&= Y^{\varepsilon_1^\vee} T_1^{-1} \cdots T^{-1}_i T_i T_{i+1} T_i^{-1} T_i 
\cdots T_1 Y^{-\varepsilon^\vee_1} 
= Y^{\varepsilon_1^\vee} T_1^{-1} \cdots T^{-1}_{i-1} T_{i+1} T_{i-1}^{-1} 
\cdots T_1 Y^{-\varepsilon^\vee_1} \\
&= Y^{\varepsilon_1^\vee} T_{i+1} Y^{-\varepsilon^\vee_1} = T_{i+1},
\end{align*}
and
\begin{align*}
gT_{n-1}g^{-1}
&= g T_{n-1}T_{n-2}\cdots T_1T_1^{-1}\cdots T_{n-2}^{-1} g^{-1} 
= Y^{\varepsilon_1^\vee} T_1^{-1}\cdots T_{n-2}^{-1} g^{-1} \\
&= Y^{\varepsilon_1^\vee} g^{-1}T_2^{-1}\cdots T_{n-1}^{-1} 
= Y^{\varepsilon_1^\vee} g^{-1} T_1\cdots T_{n-1} T_{n-1}^{-1}\cdots T_1^{-1}
T_2^{-1}\cdots T_{n-1}^{-1} \\
&= Y^{\varepsilon_1^\vee - \varepsilon_n^\vee} T_{n-1}^{-1}\cdots T_1^{-1}\cdots T_{n-1}^{-1} 
= S_0,
\end{align*}
and
\begin{align*}
gS_0g^{-1}
&= Y^{\varepsilon_1^\vee}T_1^{-1}\cdots T_{n-1}^{-1}Y^{\varepsilon_1^\vee-\varepsilon_n^\vee}
T_{n-1}^{-1}\cdots T_1^{-1}\cdots T_{n-1}^{-1} T_{n-1}\cdots T_1Y^{-\varepsilon_1^\vee} \\
&=Y^{\varepsilon_1^\vee}T_1^{-1}\cdots T_n^{-1}Y^{-\varepsilon_n^\vee}
Y^{\varepsilon_1^\vee}T_{n-1}^{-1}\cdots T_2^{-1}Y^{-\varepsilon_1^\vee} \\
&=Y^{\varepsilon_1^\vee}T_1^{-1}\cdots T_{n-1}^{-1}T_{n-1}\cdots T_1
Y^{-\varepsilon_1^\vee}T_1\cdots T_{n-1}Y^{\varepsilon_1^\vee}Y^{-\varepsilon_1^\vee}
T_{n-1}^{-1}\cdots T_2^{-1} = T_1,
\end{align*}
which establishes the relations in \eqref{ggveecyclicDAArtrel}.

To prove the first relation in \eqref{ggveecommutationDAArtrel}:
\begin{align*}
T_1g^\vee g 
&= T_1 g^\vee Y^{\varepsilon_1^\vee}T_1^{-1}\cdots T_{n-1}^{-1} 
= T_1Y^{\varepsilon_2^\vee} g^\vee T_1^{-1}\cdots T_{n-1}^{-1} \\
&=T_1T_1^{-1} Y^{\varepsilon_1^\vee}T_1^{-1}g^\vee T_1^{-1}\cdots T_{n-1}^{-1} 
=g T_{n-1}\cdots T_2T_1 T_1^{-1}g^\vee T_1^{-1}\cdots T_{n-1}^{-1} \\
&= gT_{n-1}\cdots T_2 g^\vee T_1^{-1}\cdots T_{n-1}^{-1} 
= g g^\vee T_{n-2}\cdots T_1 T_1^{-1}\cdots T_{n-1}^{-1} 
= g g^\vee T_{n-1}^{-1},
\end{align*}
and to prove the second relation in \eqref{ggveecommutationDAArtrel}:
\begin{align*}
T_{n-1}^{-1}\cdots T_1^{-1} &g (g^\vee)^{-1}
= T_{n-1}^{-1}\cdots T_1^{-1} g T_{n-1}\cdots T_1T_1^{-1}\cdots T_{n-1}^{-1} (g^\vee)^{-1}  \\
&= T_{n-1}^{-1}\cdots T_1^{-1} Y^{\varepsilon_1^\vee} T_1^{-1}\cdots T_{n-1}^{-1}(g^\vee)^{-1} 
= Y^{\varepsilon_n^\vee}(g^\vee)^{-1}  \\
&= (g^\vee)^{-1}g^\vee Y^{\varepsilon_n^\vee} (g^\vee)^{-1} 
= (g^\vee)^{-1} q Y^{\varepsilon_1^\vee} 
= q(g^\vee)^{-1} g T_{n-1}\cdots T_1.
\end{align*}
Part (b) follows from part (a) by applying the duality involution $\iota$.
\end{proof}

\subsection{The elements $X^{t_\mu w}$}


Use the notation
$X_i = X^{\varepsilon_i}$,
$$\hbox{and let}\qquad
X^\mu = X_1^{\mu_1}\cdots X_n^{\mu_n} = X^{\mu_1\varepsilon_1+\cdots+\mu_n\varepsilon_n},
\qquad
\hbox{for $\mu = (\mu_1, \ldots, \mu_n)\in \ZZ^n$.}
$$
Using the notation of the affine Weyl group $W= \{ t_\mu w\ |\ \mu\in \ZZ^n, w\in S_n\}$ from 
Section \ref{affWsection},
for $\mu\in \ZZ^n$ and $w\in S_n$ define
\begin{equation}
X^{t_\mu w} = X^\mu T_w,
\qquad\hbox{where $T_w = T_{i_1}\cdots T_{i_\ell}$
if $w= s_{i_1}\cdots s_{i_\ell}$ is a reduced word.}
\label{Xvdefn}
\end{equation}
The following Proposition establishes how these elements are affected by right multiplication
by the generators $T_1, \ldots, T_n$ and $g^\vee$ of $\tilde \cB_{GL_n}$.

\begin{prop}  \label{TtoXconversion}
Let $\mu\in \ZZ^n$ and $w\in S_n$.  Then
\begin{equation}
X^\mu T_{ws_i} = \begin{cases}
X^\mu T_w T_i, &\hbox{if $\ell(ws_i)>\ell(w)$,} \\
X^\mu T_w T_i^{-1}, &\hbox{if $\ell(ws_i)<\ell(w)$,}
\end{cases}
\quad \hbox{and}\quad
X^\mu T_w g^\vee = X^\mu X_{w(1)} T_{ws_1\cdots s_{n-1}}.
\end{equation}
\end{prop}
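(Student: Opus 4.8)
The plan is to treat the two formulas separately. Both of them rest only on the definition \eqref{Xvdefn}, $X^{t_\mu w}=X^\mu T_w$ with $T_w=T_{i_1}\cdots T_{i_\ell}$ read off a reduced word $w=s_{i_1}\cdots s_{i_\ell}$, together with the standard fact (Matsumoto's theorem applied to the braid relations among $T_1,\dots,T_{n-1}$, which hold in $\tilde\cB_{GL_n}$) that $T_w$ is independent of the chosen reduced word, so that $T_w$ and its inverse $T_w^{-1}$ make sense in the group $\tilde\cB_{GL_n}$.

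\textbf{First formula.} This is immediate from reduced words. If $\ell(ws_i)=\ell(w)+1$, then appending $s_i$ to a reduced word for $w$ yields a reduced word for $ws_i$, so $T_{ws_i}=T_wT_i$ and $X^\mu T_{ws_i}=X^\mu T_wT_i$. If $\ell(ws_i)=\ell(w)-1$, then $w=(ws_i)s_i$ with lengths adding, so $T_w=T_{ws_i}T_i$, and since $T_i$ is invertible we get $T_{ws_i}=T_wT_i^{-1}$, hence $X^\mu T_{ws_i}=X^\mu T_wT_i^{-1}$.

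\textbf{Second formula.} Since the $X^{\varepsilon_j}$ are units that commute with one another, it suffices to prove the case $\mu=0$, namely $T_wg^\vee=X^{\varepsilon_{w(1)}}T_{wc_n}$ where $c_n=s_1\cdots s_{n-1}$; the general statement follows by left-multiplying by $X^\mu$. By \eqref{GLXYdefn}, $g^\vee=X^{\varepsilon_1}T_1\cdots T_{n-1}=X^{\varepsilon_1}T_{c_n}$, so the claim reads $T_wX^{\varepsilon_1}T_{c_n}=X^{\varepsilon_{w(1)}}T_{wc_n}$. I would prove this by induction on $\ell(w)$, the base case $w=e$ being just $g^\vee=X^{\varepsilon_1}T_{c_n}$. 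For the step, pick a left descent $s_i$ of $w$, write $w=s_iw'$ with $\ell(w')=\ell(w)-1$ so $T_w=T_iT_{w'}$, and apply the inductive hypothesis to reduce to computing $T_i\bigl(X^{\varepsilon_{w'(1)}}T_{w'c_n}\bigr)$. There are three cases for $w'(1)$: (i) if $w'(1)\notin\{i,i+1\}$, then $T_iX^{\varepsilon_{w'(1)}}=X^{\varepsilon_{w'(1)}}T_i$ by the commutation relation of Theorem \ref{BGLaltpres}(b) and $w(1)=w'(1)$; (ii) if $w'(1)=i$, then $T_iX^{\varepsilon_i}=X^{\varepsilon_{i+1}}T_i^{-1}$ (a rearrangement of $X^{\varepsilon_{i+1}}=T_iX^{\varepsilon_i}T_i$) and $w(1)=s_i(i)=i+1$; (iii) $w'(1)=i+1$ cannot occur, because $s_i$ being a left descent of $w=s_iw'$ forces $(w')^{-1}(i)<(w')^{-1}(i+1)$, incompatible with $(w')^{-1}(i+1)=1$ — and this is precisely the case that would have required the awkward relation for $T_iX^{\varepsilon_{i+1}}$. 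In each surviving case one then identifies the trailing $T$-word with $T_{wc_n}=T_{s_iw'c_n}$ by comparing $\ell(s_iw'c_n)$ with $\ell(w'c_n)$, i.e. comparing $c_n^{-1}((w')^{-1}(i))$ with $c_n^{-1}((w')^{-1}(i+1))$. Since $c_n^{-1}$ is $k\mapsto k-1$ on $\{2,\dots,n\}$ and sends $1$ to $n$, it preserves the order of any pair avoiding the value $1$ and sends $1$ to the maximum; so in case (i) the length goes up, giving $T_{wc_n}=T_iT_{w'c_n}$ (matching), while in case (ii), where $(w')^{-1}(i)=1$, the length goes down, giving $T_{wc_n}=T_i^{-1}T_{w'c_n}$ (matching the $T_i^{-1}$ produced by the $X$-relation).

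\textbf{Expected main obstacle.} The only real content is the bookkeeping in the second formula: keeping straight the two distinct length comparisons — $\ell(s_iw')$ versus $\ell(w')$, which governs $T_w=T_iT_{w'}$, and $\ell(s_iw'c_n)$ versus $\ell(w'c_n)$, which governs whether $T_{wc_n}$ ends in $T_i$ or $T_i^{-1}$ — and seeing that the extra inverse emitted by $T_iX^{\varepsilon_i}=X^{\varepsilon_{i+1}}T_i^{-1}$ in case (ii) is exactly cancelled by the sign flip in the second comparison. Conceptually this works because $c_n^{-1}$ reverses exactly the inversions through position $1$, and $w'(1)=i$ (equivalently $(w')^{-1}(i)=1$) is precisely the condition that both triggers the $T_i^{-1}$ and puts position $1$ into play.
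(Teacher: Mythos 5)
Your proof is correct, and for the second identity it takes a genuinely different route from the paper's. The paper proves $T_wg^\vee = X_{w(1)}T_{wc_n}$ in one chain of equalities: it factors $w=(s_{k-1}\cdots s_1)z$ with $k=w(1)$ and $z$ in the stabilizer of $1$ (so $T_w=T_{k-1}\cdots T_1T_z$ with lengths adding), slides $g^\vee$ past $T_z$ using the defining conjugation relation $g^\vee T_i(g^\vee)^{-1}=T_{i+1}$ to produce $T_{c_n^{-1}zc_n}$, and then recognizes the prefix $T_{k-1}\cdots T_1g^\vee T_{n-1}^{-1}\cdots T_k^{-1}$ as $X_k$ directly from the definition \eqref{GLXYdefn}; the trailing words recombine into $T_{wc_n}$ by an explicit cancellation $s_k\cdots s_{n-1}c_n^{-1}=s_{k-1}\cdots s_1$. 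You instead induct on $\ell(w)$ and push a single $T_i$ past $X^{\varepsilon_{w'(1)}}$ using the Bernstein-type relations of Theorem \ref{BGLaltpres}(b), with the length bookkeeping for $T_{wc_n}$ versus $T_{w'c_n}$ done via the order-preserving properties of $c_n^{-1}$; your observation that the case $w'(1)=i+1$ is excluded by the left-descent hypothesis is exactly the point that makes the induction close without ever needing the relation for $T_iX^{\varepsilon_{i+1}}$. The paper's argument is shorter and exploits the special structure of $g^\vee$ and the parabolic coset decomposition of $S_n/\mathrm{Stab}(1)$ in one shot; yours is more generic affine-Hecke-algebra reasoning that localizes all the content in the single relation $T_iX^{\varepsilon_i}=X^{\varepsilon_{i+1}}T_i^{-1}$ and would adapt more readily to statements proved one simple reflection at a time. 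Both rest on the same foundations (well-definedness of $T_w$ and the relations of Theorem \ref{BGLaltpres}(b)), so there is no circularity, and your reduction to $\mu=0$ is harmless since $X^\mu$ only multiplies on the left.
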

\begin{proof}
The first equality follows from the fact that if $z\in S_n$ and $\ell(zs_i) > \ell(z)$ 
then $T_zT_i = T_{zs_i}$.  For the second equality: 
Let $k = w(1)$ and write $w = s_{k-1}\cdots s_1z$ with $z$ in the subgroup of $S_n$ 
that is generated by $s_2, \ldots, s_{n-1}$.
Letting $c_n = s_1\cdots s_{n-1}$ and using 
$g^\vee T_i (g^\vee)^{-1} = T_{i+1}$ then 
$(g^\vee)^{-1}T_zg^\vee = T_{c_n^{-1}zc_n}$ and
\begin{align*}
T_w g^\vee &= T_{k-1}\cdots T_1 T_z g^\vee 
= T_{k-1}\cdots T_1 g^\vee ((g^\vee)^{-1}T_z g^\vee) \\
&= T_{k-1}\cdots T_1 g^\vee T_{c_n^{-1}zc_n}
= (T_{k-1}\cdots T_1 g^\vee T_{n-1}^{-1}\cdots T_k^{-1}) T_k\cdots T_{n-1} T_{c_n^{-1}zc_n} \\
&= X_k T_k\cdots T_{n-1} T_{s_k\cdots s_{n-1} c_n^{-1}zc_n} 
= X_kT_{s_{k-1}\cdots s_1 zc_n}
= X_kT_{wc_n}.
\end{align*}
\end{proof}


\subsection{The type $GL_n$ double affine Hecke algebra (DAHA)}

The \emph{type $GL_n$ double affine Hecke algebra} $\tilde H_{GL_n}$ is the quotient of the
group algebra of $\tilde \cB_{GL_n}$ by the relations
\begin{equation}
(T_i-t^{\frac12})(T_i+t^{-\frac12}) = 0,
\qquad\hbox{for $i\in \{1, \ldots, n-1\}$.}
\label{Heckerelation}
\end{equation}
The involutions 
$\iota\colon \tilde \cB_{GL_n}\to \tilde \cB_{GL_n}$ 
and $\eta\colon \tilde \cB_{GL_n}\to \tilde \cB_{GL_n}$
from Theorem \ref{dualitythm}  preserve the relations in 
\eqref{Heckerelation} to provide involutions
\begin{equation}
\iota\colon \tilde H_{GL_n}\to \tilde H_{GL_n}
\qquad\hbox{and}\qquad
\eta\colon \tilde H_{GL_n}\to \tilde H_{GL_n}.
\label{Heckedualities}
\end{equation}

The following proposition explains how $g, T_1, \ldots, T_n$ move past the $X_1,\ldots, X_n$
inside the affine Hecke algebra.

\begin{prop} \label{GLnpolyaction} 
Let $\mu = (\mu_1, \ldots, \mu_n)\in \ZZ^n$ and 
define $X^\mu = X^{\mu_1\varepsilon_1+\cdots+\mu_n\varepsilon_n}$.  
The symmetric group $S_n$ acts on $\ZZ^n$ by permuting coordinates.
Let $s_1, \ldots, s_{n-1}$ be the simple reflections in $S_n$.
Then, as elements of  $\tilde H_{GL_n}$,
$$gX^\mu 
= q^{-\mu_n}X^{s_1s_2\cdots s_{n-1}\mu}g
= q^{-\mu_n}X^{(\mu_n,\mu_1, \ldots, \mu_{n-1})} g
,
$$
and
$$T_iX^\mu = (s_iX^{\mu})T_i + \frac{t^{\frac12}-t^{-\frac12}}{1-X_iX_{i+1}^{-1}}(1-s_i)X^\mu,
\quad\hbox{for $i\in \{1, \ldots, n-1\}$}.$$
\end{prop}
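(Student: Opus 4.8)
The plan is to derive both identities directly from the alternate presentation of $\tilde H_{GL_n}$ in terms of $g, T_1, \ldots, T_{n-1}$ and $X^{\varepsilon_1}, \ldots, X^{\varepsilon_n}$ recorded in Theorem~\ref{BGLaltpres}(b), together with the Hecke relation \eqref{Heckerelation}. Nothing deeper than these relations and some bookkeeping is needed.

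For the first identity I would use that $q$ is central and that the $X^{\varepsilon_j}$ commute (relation \eqref{BGLXcomm}), so conjugation by $g$ restricts to an algebra automorphism of $\CC[X_1^{\pm1}, \ldots, X_n^{\pm1}]$; applying the relations $gX^{\varepsilon_i}g^{-1} = X^{\varepsilon_{i+1}}$ ($i<n$) and $gX^{\varepsilon_n}g^{-1} = q^{-1}X^{\varepsilon_1}$ from Theorem~\ref{BGLaltpres}(b) termwise gives $gX^\mu g^{-1} = X_2^{\mu_1}X_3^{\mu_2}\cdots X_n^{\mu_{n-1}}(q^{-1}X_1)^{\mu_n} = q^{-\mu_n}X_1^{\mu_n}X_2^{\mu_1}\cdots X_n^{\mu_{n-1}}$, which is $q^{-\mu_n}X^{(\mu_n,\mu_1,\ldots,\mu_{n-1})}$. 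A direct check that applying $s_{n-1}, s_{n-2}, \ldots, s_1$ in turn to $\mu$ (each swapping the relevant adjacent coordinates, as in \eqref{WXonZn}) produces $(\mu_n,\mu_1,\ldots,\mu_{n-1})$ identifies this with $q^{-\mu_n}X^{s_1s_2\cdots s_{n-1}\mu}$; right-multiplying by $g$ finishes it.

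For the second identity, write $R_i(f) = \frac{t^{\frac12}-t^{-\frac12}}{1-X_iX_{i+1}^{-1}}(1-s_i)f$ (an element of $\CC[X_i^{\pm1},X_{i+1}^{\pm1}]$, since $f-s_if$ is divisible by $1-X_iX_{i+1}^{-1}$), so the assertion is $T_if = (s_if)T_i + R_i(f)$ for $f=X^\mu$. The key observation is that this relation is additive in $f$ and is closed under products and inverses: if it holds for $f$ and for $g$, then computing $T_i(fg) = (T_if)g$ two ways shows it holds for $fg$ once one checks the purely commutative identity $R_i(fg) = (s_if)R_i(g) + R_i(f)g$ (expand $(1-s_i)(fg) = (s_if)(g-s_ig) + (f-s_if)g$), and this product rule with $g = f^{-1}$ gives $R_i(f^{-1}) = -(s_if)^{-1}R_i(f)f^{-1}$, which is exactly what is needed to pass from $f$ to $f^{-1}$. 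Since every $X^\mu$ is a product of $X_i^{\pm1}$, $X_{i+1}^{\pm1}$, and a monomial $X^{\mu''}$ supported on $\{1,\ldots,n\}\setminus\{i,i+1\}$, it suffices to verify the relation on these three types. For $X^{\mu''}$ it is the commutation relation $T_iX^{\mu''} = X^{\mu''}T_i$ of \eqref{TX2} together with $R_i(X^{\mu''}) = 0$ (as $s_i$ fixes $\mu''$). For $X^{\varepsilon_i}$ and $X^{\varepsilon_{i+1}}$ I would use $X^{\varepsilon_{i+1}} = T_iX^{\varepsilon_i}T_i$ from \eqref{TX2} and $T_i^{-1} = T_i - (t^{\frac12}-t^{-\frac12})$, $T_i^2 = (t^{\frac12}-t^{-\frac12})T_i + 1$ from \eqref{Heckerelation} to get $T_iX^{\varepsilon_i} = X^{\varepsilon_{i+1}}T_i - (t^{\frac12}-t^{-\frac12})X^{\varepsilon_{i+1}}$ and $T_iX^{\varepsilon_{i+1}} = T_i^2X^{\varepsilon_i}T_i = X^{\varepsilon_i}T_i + (t^{\frac12}-t^{-\frac12})X^{\varepsilon_{i+1}}$; a one-line simplification $\frac{(1-s_i)X^{\varepsilon_i}}{1-X_iX_{i+1}^{-1}} = -X_{i+1}$ and $\frac{(1-s_i)X^{\varepsilon_{i+1}}}{1-X_iX_{i+1}^{-1}} = X_{i+1}$ matches these with the claimed formula.

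The step requiring the most care is the product-and-inverse closure argument for the $T_i$-relation: one must keep the $s_i$-twists on the correct sides so that $R_i(fg) = (s_if)R_i(g) + R_i(f)g$ matches the noncommutative expansion of $T_i(fg)$, and handle the inversion case via that same product rule. Everything else is the explicit base-case computation above, which is routine given Theorem~\ref{BGLaltpres}(b) and \eqref{Heckerelation}.
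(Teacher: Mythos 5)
Your proposal is correct and follows essentially the same route as the paper's proof: the $g$-identity is obtained by conjugating $X^\mu$ termwise using $gX^{\varepsilon_i}g^{-1}=X^{\varepsilon_{i+1}}$ and $gX^{\varepsilon_n}g^{-1}=q^{-1}X^{\varepsilon_1}$, and the $T_i$-identity is verified on the generators $X_i$, $X_{i+1}$, $X_j$ ($j\ne i,i+1$) and then extended by exactly the product rule $R_i(fg)=(s_if)R_i(g)+R_i(f)g$ that underlies the paper's additivity computation for $T_iX^{\mu+\nu}$. Your explicit treatment of $f^{-1}$ via that product rule is a small point the paper leaves implicit (its additivity step as written only generates $\ZZ_{\ge 0}^n$ from the base cases), so if anything your version is marginally more complete.
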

\begin{proof}
Start with $X_{i+1}=T_iX_iT_i$ and use $T_i^{-1} = T_i-(t^{\frac12}-t^{-\frac12})$ to get
\begin{align*}
T_iX_i &= X_{i+1}T_i^{-1} = X_{i+1}(T_i - (t^{\frac12}-t^{-\frac12}))
=X^{s_i\varepsilon_i}T_i + (t^{\frac12}-t^{-\frac12})\frac{X_i-X_{i+1}}{1-X_iX_{i+1}^{-1}} \\
&=X^{s_i\varepsilon_i}T_i + \frac{(t^{\frac12}-t^{-\frac12})}{1-X_iX_{i+1}^{-1}}(1-s_i)X_i.
\end{align*}
and
\begin{align*}
T_iX_{i+1} &= T_i^2X_iT_i = ((t^{\frac12}-t^{-\frac12})T_i+1)X_i T_i
=X_iT_i + (t^{\frac12}-t^{-\frac12})X_{i+1} \\
&=X^{s_i\varepsilon_{i+1}}T_i + (t^{\frac12}-t^{-\frac12})\frac{X_{i+1}-X_i}{1-X_iX_{i+1}^{-1}} \\
&=(s_iX_{i+1})T_i + \frac{t^{\frac12}-t^{-\frac12}}{1-X_iX_{i+1}^{-1}} (1-s_i)X_{i+1}
\end{align*}
and, for $j\not\in \{i, i+1\}$,
$\displaystyle{
T_iX_j = X_jT_i = (s_iX_j)T_i+0 = (s_iX_j)T_i + \frac{t^{\frac12}-t^{-\frac12}}{1-X_iX_{i+1}^{-1}} (1-s_i)X_j.
}
$
\hfil\break
If
$$
T_iX^\mu = (s_iX^{\mu})T_i + \frac{t^{\frac12}-t^{-\frac12}}{1-X_iX_{i+1}^{-1}}(1-s_i)X^\mu
\quad\hbox{and}\quad
T_iX^\nu = (s_iX^{\nu})T_i + \frac{t^{\frac12}-t^{-\frac12}}{1-X_iX_{i+1}^{-1}}(1-s_i)X^\nu
$$
then
\begin{align*}
T_iX^{\mu+\nu} &= T_iX^\mu X^\nu = \left(
(s_iX^{\mu})T_i + \frac{t^{\frac12}-t^{-\frac12}}{1-X_iX_{i+1}^{-1}}(1-s_i)X^\mu\right) X^\nu
\\
&= (s_iX^\mu) \left( (s_iX^{\nu})T_i + \frac{t^{\frac12}-t^{-\frac12}}{1-X_iX_{i+1}^{-1}}(1-s_i)X^\nu \right)
+ \frac{t^{\frac12}-t^{-\frac12}}{1-X_iX_{i+1}^{-1}}(X^\mu-(s_iX^\mu)) X^\nu
\\
&= (s_iX^{\mu+\nu}) T_i + \frac{t^{\frac12}-t^{-\frac12}}{1-X_iX_{i+1}^{-1}}
(X^{s_i\mu+\nu} - X^{s_i(\mu+\nu)} + X^{\mu+\nu} - X^{s_i\mu+\nu})
\\
&= (s_iX^{\mu+\nu}) T_i + \frac{t^{\frac12}-t^{-\frac12}}{1-X_iX_{i+1}^{-1}}
(1-s_i)X^{\mu+\nu}.
\end{align*}
Since $gX_n g^{-1} = q^{-1}X_1$ and $gX_ig^{-1} = X_{i+1}$ then
$$gX^{(\mu_1, \ldots, \mu_n)} = gX_1^{\mu_1}\cdots X_n^{\mu_n}
=X_2^{\mu_1}\cdots X_n^{\mu_{n-1}}gX_n^{\mu_n}
=q^{-\mu_n}X_2^{\mu_1}\cdots X_n^{\mu_{n-1}}X_1^{\mu_n}g.
$$
\end{proof}

\subsection{Intertwiners}

Structurally, the elements $X^{\varepsilon_1}, \ldots, X^{\varepsilon_n}$ are playing the
role of generators of a polynomial ring inside of the double affine Hecke algebra $\tilde H_{GL_n}$.
The next key point is that we can produce elements  $\tau^\vee_1, \ldots, \tau^\vee_{n-1}$
and $\tau^\vee_\pi$
which are ``replacements'' for the generators $T_1, \ldots, T_n$ and $g^\vee$, 
and which move past the elements $Y^{\varepsilon^\vee_1},
\ldots, Y^{\varepsilon^\vee_n}$ in the best possible way, by permuting the $Y_i$, as seen in
\eqref{Ypasttau}.

Define $Y_i$ for $i\in \ZZ$ by setting
\begin{equation}
Y_i = Y^{\varepsilon_i^\vee}\ \ \hbox{for $i\in \{1, \ldots, n\}$}
\qquad\hbox{and}\qquad Y_{j+n} = qY_j\ \ \hbox{for $j\in \ZZ$.}
\label{YiforiinZ}
\end{equation}
Letting
\begin{equation}
Y^{K}=q^{-1}
\quad\hbox{and}\quad \varepsilon^\vee_0 = \varepsilon^\vee_n+K
\quad\hbox{then}\quad Y_0 = Y^{\varepsilon^\vee_0} = Y^{\varepsilon^\vee_n+K} = 
Y^K Y^{\varepsilon^\vee_n} = q^{-1}Y_n.
\label{Ksource}
\end{equation}

Let 
\begin{equation}
\tau_\pi^\vee = g^\vee
\qquad\hbox{and}\qquad 
\tau_i^\vee = T_i + \frac{t^{-\frac12}(1-t)}{1-Y_i^{-1}Y_{i+1}}\quad 
\hbox{for $i\in \{1, \ldots, n-1\}$.}
\label{GLnintertwiners}
\end{equation}
For $w\in W$ define
$$\tau_w^\vee = \tau_{i_1}^\vee \cdots \tau_{i_\ell}^\vee
\qquad\hbox{for a reduced word $w= s_{i_1}\cdots s_{i_\ell}$.}
$$
The following proposition establishes that the $\tau^\vee_i$ satisfy the braid relations
so that the element $\tau^\vee_w$ does not depend on the choice of reduced word for
$w$.  The $\tau^\vee_i$ do not quite generate a symmetric group, because $(\tau^\vee_i)^2$
is not the identity.

\begin{prop} \label{intertwiners} 
For $i\in \{1, \ldots, n-2\}$ and $j,k\in \{1, \ldots, n-1\}$ with $k\not\in\{ j+1, j-1\}$,
\begin{equation}
\tau^\vee_\pi \tau_i^\vee = \tau_{i+1}^\vee \tau^\vee_\pi,
\qquad
\tau_i^\vee\tau^\vee_{i+1}\tau_i^\vee = 
\tau_{i+1}^\vee \tau^\vee_i \tau_{i+1}^\vee
\qquad\hbox{and}\qquad
\tau_k^\vee \tau_j^\vee = \tau_j^\vee \tau_k^\vee ;
\label{taubraidrels}
\end{equation}
\begin{equation}
(t^{\frac12}\tau_i^\vee)^2 =
\frac{(1-tY_i^{-1}Y_{i+1} )(1-tY_iY_{i+1}^{-1}) }
{(1-Y_i^{-1}Y_{i+1} )(1-Y_iY_{i+1}^{-1}) },
\qquad\hbox{for $i\in \{1, \ldots, n-1\}$;}
\label{tausquared}
\end{equation}
\begin{equation}
Y_i \tau_w^\vee = \tau_w^\vee Y_{w^{-1}(i)}
\qquad\hbox{for $w\in W$ and $i\in \ZZ$.}
\label{Ypasttau}
\end{equation}
\end{prop}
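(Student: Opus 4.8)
The plan is to work inside the localization $\tilde H_{GL_n}^{\mathrm{loc}}=\CC(Y)\otimes_{\CC[Y_1^{\pm1},\ldots,Y_n^{\pm1}]}\tilde H_{GL_n}$, where $\CC(Y)$ is the field of rational functions in the commuting elements $Y_1,\ldots,Y_n$; there each $\tfrac{1}{1-Y_i^{-1}Y_{i+1}}$ is defined and the $\tau^\vee_i$ of \eqref{GLnintertwiners} are honest elements. Everything rests on the $Y$-analogue of Proposition \ref{GLnpolyaction}:
$$T_iY^\lambda=(s_iY^\lambda)T_i+\frac{t^{\frac12}-t^{-\frac12}}{1-Y_i^{-1}Y_{i+1}}(1-s_i)Y^\lambda\qquad(\lambda\in\ZZ^n).$$
To get it, recall from Theorem \ref{BGLaltpres}(a) that the $Y_i$ commute, $Y_{i+1}=T_i^{-1}Y_iT_i^{-1}$, and $T_iY_j=Y_jT_i$ for $j\notin\{i,i+1\}$; writing $U_i=T_i^{-1}$, which satisfies $U_i^2=1+(t^{-\frac12}-t^{\frac12})U_i$, these are exactly the relations used in the proof of Proposition \ref{GLnpolyaction} under the substitution $X\to Y$, $T_i\to U_i$, $t^{\frac12}\to t^{-\frac12}$, so that proof gives $U_iY^\lambda=(s_iY^\lambda)U_i+\frac{t^{-\frac12}-t^{\frac12}}{1-Y_iY_{i+1}^{-1}}(1-s_i)Y^\lambda$. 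Substituting $U_i=T_i-(t^{\frac12}-t^{-\frac12})$, simplifying with $\frac{-Y_iY_{i+1}^{-1}}{1-Y_iY_{i+1}^{-1}}=\frac{1}{1-Y_i^{-1}Y_{i+1}}$, and collecting terms yields the displayed identity. Since $t^{-\frac12}(1-t)=-(t^{\frac12}-t^{-\frac12})$, subtracting $\frac{t^{\frac12}-t^{-\frac12}}{1-Y_i^{-1}Y_{i+1}}Y^\lambda$ from both sides gives
$$\tau^\vee_i\,Y^\lambda=(s_iY^\lambda)\,\tau^\vee_i\ \ (\lambda\in\ZZ^n),\qquad\text{and hence}\qquad \tau^\vee_i f=(s_if)\tau^\vee_i\ \text{ for }f\in\CC(Y).$$

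With this in hand I would dispatch \eqref{tausquared} and the easy cases of \eqref{taubraidrels}. For \eqref{tausquared}, write $\tau^\vee_i=T_i+f$ with $f=\frac{t^{-\frac12}(1-t)}{1-a}$, $a=Y_i^{-1}Y_{i+1}$; using $\tau^\vee_i f=(s_if)\tau^\vee_i$ gives $(\tau^\vee_i)^2=T_i^2+(f+s_if)T_i+(s_if)f$, and since $s_ia=a^{-1}$ one computes $f+s_if=t^{-\frac12}(1-t)=-(t^{\frac12}-t^{-\frac12})$, which exactly cancels the linear term from the Hecke relation $T_i^2=(t^{\frac12}-t^{-\frac12})T_i+1$, leaving $(\tau^\vee_i)^2=1+(s_if)f$; a short rational-function manipulation turns $t\big(1+(s_if)f\big)$ into the right-hand side of \eqref{tausquared}. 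For $\tau^\vee_\pi\tau^\vee_i=\tau^\vee_{i+1}\tau^\vee_\pi$ with $i\le n-2$: since $\tau^\vee_\pi=g^\vee$, and Theorem \ref{BGLaltpres}(a) together with \eqref{YiforiinZ} give $g^\vee T_i(g^\vee)^{-1}=T_{i+1}$ and $g^\vee Y_j(g^\vee)^{-1}=Y_{j+1}$ for all $j\in\ZZ$, conjugation by $g^\vee$ sends $\tau^\vee_i=T_i+\frac{t^{-\frac12}(1-t)}{1-Y_i^{-1}Y_{i+1}}$ to $\tau^\vee_{i+1}$. For $|j-k|\ge2$: $T_j$ and $T_k$ commute and the correction term of $\tau^\vee_k$ is fixed by $s_j$, so $\tau^\vee_j$ and $\tau^\vee_k$ commute.

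The main obstacle is the rank-two braid relation $\tau^\vee_i\tau^\vee_{i+1}\tau^\vee_i=\tau^\vee_{i+1}\tau^\vee_i\tau^\vee_{i+1}$; conjugating by powers of $g^\vee$ reduces it to the case $i=1$. I would prove it structurally: by PBW for the affine Hecke subalgebra $H^Y\subseteq\tilde H_{GL_n}$ generated by the $Y_i$ and the $T_i$, its localization $H^Y_{\mathrm{loc}}=\bigoplus_{w\in S_n}\CC(Y)T_w$ is free over $\CC(Y)$ with basis $\{T_w\}$; the localized Demazure--Lusztig relation shows that $\tau^\vee_w:=\tau^\vee_{i_1}\cdots\tau^\vee_{i_\ell}$ (for any reduced word $w=s_{i_1}\cdots s_{i_\ell}$) equals $T_w$ plus a $\CC(Y)$-combination of $T_{w'}$ with $w'<w$, so $\{\tau^\vee_w:w\in S_n\}$ is also a $\CC(Y)$-basis and $\tau^\vee_w f=(wf)\tau^\vee_w$. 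Hence, for $w_0=s_is_{i+1}s_i$, the elements $D\in H^Y_{\mathrm{loc}}$ with $Df=(w_0f)D$ for all $f\in\CC(Y)$ form the free rank-one module $\CC(Y)\tau^\vee_{w_0}$; both $\tau^\vee_i\tau^\vee_{i+1}\tau^\vee_i$ and $\tau^\vee_{i+1}\tau^\vee_i\tau^\vee_{i+1}$ lie in it (by the commutation identity) and both have coefficient $1$ on $T_{w_0}$ (using $T_iT_{i+1}T_i=T_{i+1}T_iT_{i+1}$), so they agree. (If one prefers not to invoke PBW, the same identity can instead be checked by expanding both sides in the $T_w$ using the $Y$-Demazure--Lusztig relation — a finite but lengthy calculation.) Once \eqref{taubraidrels} holds, $\tau^\vee_w$ is independent of the reduced word, and \eqref{Ypasttau} follows for all $w\in W$ by induction on the length of $w=s_{i_1}\cdots s_{i_\ell}$ (with $s_\pi=\pi$): each factor is moved past $Y_j$ using $Y_j\tau^\vee_{i_k}=\tau^\vee_{i_k}Y_{s_{i_k}(j)}$ or $Y_j\tau^\vee_\pi=\tau^\vee_\pi Y_{j-1}$, and composing gives $Y_i\tau^\vee_w=\tau^\vee_w Y_{w^{-1}(i)}$.
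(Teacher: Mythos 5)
Your proposal is correct, and for \eqref{tausquared} and the generator cases of \eqref{Ypasttau} it is essentially the paper's computation in a different packaging: the paper manipulates $Y_{i+1}=T_i^{-1}Y_iT_i^{-1}$ term by term against the two forms of $\tau^\vee_i$ (\eqref{GLnintertwiners} and \eqref{GLnintertwineralternative}), whereas you first prove the full $Y$-analogue of the Bernstein--Lusztig relation of Proposition \ref{GLnpolyaction} (your substitution $U_i=T_i^{-1}$, $t^{\frac12}\mapsto t^{-\frac12}$ is valid and your sign bookkeeping checks out) and then read off $\tau^\vee_iY^\lambda=(s_iY^\lambda)\tau^\vee_i$ and $(\tau^\vee_i)^2=1+(s_if)f$ in one stroke. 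The genuine divergence is the rank-two braid relation, which the paper does not actually prove: it offers either ``brute force expansion'' or faithfulness of the polynomial representation, citing \cite{Ra03}. You instead give a structural uniqueness argument: fix one reduced word per $w\in S_n$, use triangularity of $\tau^\vee_{i_1}\cdots\tau^\vee_{i_\ell}=T_w+\sum_{w'<w}c_{w'}T_{w'}$ over the $\CC(Y)$-basis $\{T_w\}$ of the localized affine Hecke algebra, observe that any $D$ with $Df=(w_0f)D$ for all $f\in\CC(Y)$ is a $\CC(Y)$-multiple of $\tau^\vee_{w_0}$ (since $S_n$ acts faithfully on $\CC(Y)$), and match the coefficient $1$ of $T_{w_0}=T_iT_{i+1}T_i=T_{i+1}T_iT_{i+1}$ on both sides. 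This is a complete and standard argument; its only external input is the PBW/Bernstein basis theorem for the affine Hecke subalgebra generated by the $T_i$ and $Y_i$, which the paper likewise does not establish --- so each route outsources exactly one structural fact (yours: PBW; the paper's: faithfulness of the polynomial representation). Your version has the advantage of explaining \emph{why} the braid relation must hold (both sides implement the same intertwining of the $S_n$-action on $\CC(Y)$ with the same leading term), and of yielding \eqref{Ypasttau} for arbitrary $w\in W$ by a clean induction over reduced words, a step the paper leaves implicit after treating only the generators.
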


\begin{proof}
Using $T_i = T_i^{-1}+(t^{\frac12}-t^{-\frac12})$,
\begin{align}
\tau_i^\vee 
&= T_i + \frac{t^{-\frac12}(1-t)}{1-Y_i^{-1}Y_{i+1}}
= (T_i^{-1}+(t^{\frac12}-t^{-\frac12}) + \frac{t^{-\frac12}(1-t)}{1-Y_i^{-1}Y_{i+1}}
\nonumber
\\
&
= T_i^{-1} + \frac{ (Y_i^{-1}Y_{i+1}-1+1)t^{-\frac12}(1-t)}{1-Y_i^{-1}Y_{i+1}} 
= T_i^{-1} + \frac{t^{-\frac12}(1-t)Y_i^{-1}Y_{i+1}}{1-Y_i^{-1}Y_{i+1}}.
\label{GLnintertwineralternative}
\end{align}
To prove \eqref{Ypasttau}, prove that
\begin{equation}
Y_1\tau_\pi^\vee = q^{-1}\tau_\pi^\vee Y_n
\quad\hbox{and}\quad
Y_i\tau_\pi^\vee = \tau_\pi^\vee Y_{i-1} \quad\hbox{for $i\in \{2, \ldots n\},\quad and$}
\label{rootaction}
\end{equation}
$$
Y_i\tau_i^\vee = \tau_i^\vee Y_{i+1}, \qquad
Y_{i+1}\tau_i^\vee = \tau_i^\vee Y_i \qquad \hbox{and}\qquad Y_k\tau_i^\vee =\tau_i^\vee Y_k,$$
for $i\in \{1, \ldots, n-1\}$ and $k\in \{1, \ldots, n\}$ with $k\not\in \{i, i+1\}$.
By \eqref{TY2GL} and \eqref{pidefn},
$$\tau^\vee_\pi Y^{\varepsilon^\vee_n} = qY^{\varepsilon_1}\tau^\vee_\pi 
\quad\hbox{gives}\quad
Y_1\tau^\vee_\pi = \tau^\vee_\pi  q^{-1}Y_n  = \tau^\vee_\pi Y_0 = \tau^\vee_\pi Y_{\pi^{-1}(1)}.
$$
and $Y^{\varepsilon^\vee_{i+1}}g^\vee = g^\vee Y^{\varepsilon^\vee_i}$ for $i\in \{1, \ldots, n-1\}$.
Using $Y_{i+1} = T_i^{-1}Y_iT_i^{-1}$,
\begin{align*}
\tau_i^\vee Y_i 
&= \left( T_i^{-1} + \frac{t^{-\frac12}(1-t)Y_i^{-1}Y_{i+1}}{1-Y_i^{-1}Y_{i+1}}\right)Y_i 
= \left( Y_{i+1}T_i + \frac{t^{-\frac12}(1-t)Y_{i+1}}{1-Y_{i+1}}\right) 
= Y_{i+1}\tau^\vee _i,
\quad\hbox{and} \\
\tau_i^\vee Y_{i+1}
&= \left( T_iY_{i+1} + \frac{t^{-\frac12}(1-t)Y_{i+1}}{1-Y_{i+1}}\right)  
= Y_i\left( T_i^{-1} + \frac{t^{-\frac12}(1-t)Y_i^{-1}Y_{i+1}}{1-Y_i^{-1}Y_{i+1}}\right) 
=Y_i \tau_i^\vee.
\end{align*}
If $k\not\in \{i, i+1\}$ then $T_iY_k = Y_kT_i$ and $Y_iY_k = Y_kY_i$ and $Y_{i+1}Y_k=Y_kY_{i+1}$
and so
$$\tau_i^\vee Y_k
= \left( T_i + \frac{t^{-\frac12}(1-t)}{1-Y_i^{-1}Y_{i+1}}\right)Y_k
= Y_k \left( T_i + \frac{t^{-\frac12}(1-t)}{1-Y_i^{-1}Y_{i+1}}\right)
=Y_k \tau_i^\vee.
$$
Using \eqref{GLnintertwineralternative},
\begin{align*}
(\tau_i^\vee)^2
&= \left( T_i + \frac{t^{-\frac12}(1-t)}{1-Y_i^{-1}Y_{i+1}}\right)\tau_i^\vee 
=  T_i\tau_i^\vee 
+ \tau_i^\vee \frac{t^{-\frac12}(1-t)}{1-Y_{i+1}^{-1}Y_i} \\
&= T_i\left( T_i^{-1} + \frac{t^{-\frac12}(1-t)Y_i^{-1}Y_{i+1}}{1-Y_i^{-1}Y_{i+1}}\right)
+\left( T_i + \frac{t^{-\frac12}(1-t)}{1-Y_i^{-1}Y_{i+1}}\right)
\frac{t^{-\frac12}(1-t)}{1-Y_{i+1}^{-1}Y_i} \\
&= 1+ T_i \frac{t^{-\frac12}(1-t)Y_i^{-1}Y_{i+1}}{1-Y_i^{-1}Y_{i+1}}
+ T_i  \frac{t^{-\frac12}(1-t)}{1-Y_{i+1}^{-1}Y_i} 
+ \left(\frac{t^{-\frac12}(1-t)}{1-Y_i^{-1}Y_{i+1}}\right)
\frac{t^{-\frac12}(1-t)}{1-Y_{i+1}^{-1}Y_i}  \\
&=
\frac{(1-Y_i^{-1}Y_{i+1})(1-Y_{i+1}^{-1}Y_i) + t^{-1}-2+t}
{(1-Y_i^{-1}Y_{i+1} )(1-Y_{i+1}^{-1}Y_i) } 
=
\frac{(t^{-\frac12}-t^{\frac12}Y_i^{-1}Y_{i+1} )(t^{-\frac12}-t^{\frac12}Y_iY_{i+1}^{-1}) }
{(1-Y_i^{-1}Y_{i+1} )(1-Y_iY_{i+1}^{-1}) }.
\end{align*}
The proof of the relations in \eqref{taubraidrels} can be done by comparing
the brute force expansion of each side using the relations in \eqref{TY1GL}
and \eqref{TY2GL}.  An alternative, often used, argument is to note that the action of each
side on the polynomial representation (which is a faithful representation of $\widetilde{H}$) 
produces the same output (see Proposition 2.14(e) in \cite{Ra03}).
\end{proof}

\subsection{The polynomial representation}
\label{GLnpolyderiv}

In this section we build the action of the double affine Hecke algebra on Laurent
polynomials in $X_1, \ldots, X_n$.  The elements $Y_1, \ldots, Y_n$ are then
a large family of commuting elements acting on the polynomial ring 
$\CC[X_1^{\pm 1},\ldots, X_n^{\pm1}]$.  The Macdonald polynomials are the 
simultaneous eigenvectors for the family of commuting elements $Y_1, \ldots, Y_n$.

Let $q,t\in \CC^\times$ such that $1\not\in \{q^at^b\ |\ \hbox{$a,b\in \ZZ$ and 
$a$ and $b$ not both $0$}\}$.
The polynomial representation is
$$\CC[X] = \CC[X_1^{\pm 1},\ldots, X_n^{\pm1}]
= \hbox{$\CC$-span}\{ X^\mu\mathbf{1}\ |\ \mu\in \ZZ^n\}$$
with the action of DAHA determined by
$T_i\mathbf{1} = t^{\frac12}\mathbf{1}$ and
$g\mathbf{1} = \mathbf{1}$
so that, by \eqref{GLXYdefn},
\begin{equation}
Y^{\varepsilon_1^\vee}\mathbf{1} =
t^{\frac12(n-1)}\mathbf{1}
\qquad\hbox{and}\qquad
Y^{\varepsilon_i}\mathbf{1} = t^{\frac12(n-1)-2(i-1)\frac12}\mathbf{1}
= t^{-(i-1)+\frac12(n-1)}\mathbf{1}.
\label{Yion1}
\end{equation}
Following the notation of \cite[Ch.\ VI (3.1)]{Mac},
let $T_{q^{-1},X_n}$ be the operator on $\CC[X_1^{\pm1}, \ldots, X_n^{\pm1}]$ given by
$$(T_{q^{-1},X_n} h)(X_1, \ldots, X_n) = h(X_1, \ldots, X_{n-1}, q^{-1}X_n).$$

\begin{prop} \label{GLnpolyrepops}
As operators on the polynomial representation
$$g = s_1s_2\cdots s_{n-1} T_{q^{-1}, X_n}
\qquad\hbox{and}\qquad
T_i =t^{-\frac12} \big( t -\frac{tX_i - X_{i+1}}{X_i-X_{i+1}}(1-s_i) \big)
\qquad\hbox{for $i\in \{1, \ldots, n-1\}$.}
$$
\end{prop}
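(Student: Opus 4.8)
The plan is to verify directly that the two claimed operators on $\CC[X_1^{\pm1},\ldots,X_n^{\pm1}]$ satisfy the defining relations of the polynomial representation, i.e. that they agree with the abstract generators $g$ and $T_i$ of $\tilde H_{GL_n}$ acting on $\CC[X]$. Since the polynomial representation is generated by $\mathbf{1}$ under the action of $X_1,\ldots,X_n$, it suffices to check three things: first, that the proposed operators satisfy the Hecke relation \eqref{Heckerelation} and the braid/cyclic relations of Theorem \ref{BGLaltpres}(b) (in fact, since we only need to identify the action, it is enough to check the relations that determine the representation, together with the normalization $T_i\mathbf 1 = t^{\frac12}\mathbf 1$ and $g\mathbf 1 = \mathbf 1$); second, that the proposed $T_i$ acts on $X^\mu$ by the Demazure–Lusztig-type formula in Proposition \ref{GLnpolyaction}; and third, that the proposed $g$ satisfies $gX^\mu = q^{-\mu_n}X^{(\mu_n,\mu_1,\ldots,\mu_{n-1})}g$ as in Proposition \ref{GLnpolyaction}. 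Because Proposition \ref{GLnpolyaction} shows these commutation rules, together with the values on $\mathbf 1$, pin down the operators uniquely on all of $\CC[X]$, matching them is enough.

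First I would handle $T_i$. Write $T_i = t^{-\frac12}\bigl(t - \frac{tX_i-X_{i+1}}{X_i-X_{i+1}}(1-s_i)\bigr)$ and compute $T_i X^\mu$ by splitting into the cases where $s_i$ fixes $X^\mu$ and where it does not; a short manipulation of $\frac{tX_i-X_{i+1}}{X_i-X_{i+1}}$ rewrites the operator as $t^{\frac12}s_i + \frac{t^{\frac12}-t^{-\frac12}}{1-X_iX_{i+1}^{-1}}(1-s_i)$, whence $T_iX^\mu = (s_iX^\mu)T_i' + \frac{t^{\frac12}-t^{-\frac12}}{1-X_iX_{i+1}^{-1}}(1-s_i)X^\mu$ once one is careful about where the residual $s_i$ lands — this is exactly the formula in Proposition \ref{GLnpolyaction}. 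One also checks $T_i\mathbf 1 = t^{\frac12}\mathbf 1$ directly (since $s_i\mathbf 1 = \mathbf 1$) and the Hecke relation $(T_i-t^{\frac12})(T_i+t^{-\frac12})=0$, which follows from $(1-s_i)^2 = 2(1-s_i)$ on the image of $1-s_i$ together with the standard identity for the rational function coefficient; the degree-one and degree-two formulas \eqref{Tiondeg1}–\eqref{Tiondeg2} recorded earlier in the paper are the concrete shadow of this computation and can be cited.

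Next I would handle $g = s_1s_2\cdots s_{n-1}T_{q^{-1},X_n}$. Acting on $X^\mu = X_1^{\mu_1}\cdots X_n^{\mu_n}$, the operator $T_{q^{-1},X_n}$ produces $q^{-\mu_n}X^\mu$, and then $s_1\cdots s_{n-1}$ (the cycle sending the variable in position $n$ to position $1$, shifting the rest) turns $X^\mu$ into $X^{(\mu_n,\mu_1,\ldots,\mu_{n-1})}$, matching $gX^\mu = q^{-\mu_n}X^{(\mu_n,\mu_1,\ldots,\mu_{n-1})}g$ with $g\mathbf 1 = \mathbf 1$. Finally I would verify the cross relations $gT_ig^{-1} = T_{i+1}$ for $i\le n-2$ (immediate, since conjugating $T_i$ by the cyclic shift $s_1\cdots s_{n-1}$ and by $T_{q^{-1},X_n}$, which commutes with $s_i$ for $i\le n-2$, shifts the index by one) and check that $Y^{\varepsilon_1^\vee} = gT_{n-1}\cdots T_1$ acts on $\mathbf 1$ by $t^{\frac12(n-1)}$ as in \eqref{Yion1}, which is a direct computation. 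The main obstacle — really the only delicate point — is bookkeeping: getting the cyclic-shift conventions (which of $s_1\cdots s_{n-1}$ versus $s_{n-1}\cdots s_1$, and the direction of the action on monomials versus positions) consistent with \eqref{WXonZn}, \eqref{GLXYdefn} and Proposition \ref{GLnpolyaction}, and correctly tracking the $q$-powers coming from $T_{q^{-1},X_n}$ past the shift. Once the conventions are fixed, each verification is a one- or two-line computation, and uniqueness from Proposition \ref{GLnpolyaction} finishes the proof.
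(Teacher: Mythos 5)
Your proposal is correct and its core computation coincides with the paper's proof: both reduce the claim to Proposition \ref{GLnpolyaction} applied to $X^\mu\mathbf 1$ together with the normalizations $T_i\mathbf 1 = t^{\frac12}\mathbf 1$ and $g\mathbf 1=\mathbf 1$, and the same rational-function rewriting of $t^{\frac12}s_i+\frac{t^{\frac12}-t^{-\frac12}}{1-X_iX_{i+1}^{-1}}(1-s_i)$ into the stated form. The additional verifications you list (the Hecke relation, the braid and cross relations, and the eigenvalue of $Y^{\varepsilon_1^\vee}$ on $\mathbf 1$) are redundant, since the commutation rules of Proposition \ref{GLnpolyaction} plus the values on $\mathbf 1$ already determine the operators on the basis $\{X^\mu\mathbf 1\}$, exactly as your uniqueness remark observes.
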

\begin{proof}  
The first statement in Proposition \ref{GLnpolyaction} gives
$gX^\mu \mathbf{1} = (s_1s_2\cdots s_{n-1} T_{q^{-1}X_n} X^\mu)\mathbf{1},$
since $T_{q^{-1},X_n} X^\mu = q^{-\mu_n}X^\mu$.
Using the second statement in Proposition \ref{GLnpolyaction},
\begin{align}
T_iX^\mu \mathbf{1}
&= \Big( (s_iX^\mu)t^{\frac12} + \frac{t^{\frac12}-t^{-\frac12}}{1-X_iX_{i+1}^{-1}} (1-s_i)X^\mu \Big) \mathbf{1} 
\label{origpolyop} \\
&=\left(t^{\frac12}-t^{\frac12}(1-s_i) + \frac{t^{\frac12}-t^{-\frac12}}{1-X_iX_{i+1}^{-1}} (1-s_i)\right) X^\mu \mathbf{1} 
\nonumber \\
&=\left( t^{\frac12} +\frac{1}{1-X_iX_{i+1}^{-1}}
(X_iX_{i+1}^{-1}t^{\frac12} - t^{-\frac12})(1-s_i) \right) X^\mu \mathbf{1}
\nonumber \\
&=\left( t^{\frac12} +\frac{1}{X_i-X_{i+1}}
(-X_it^{\frac12} + X_{i+1}t^{-\frac12})(1-s_i) \right) X^\mu \mathbf{1}
\nonumber \\
&=t^{-\frac12} \left( t -\frac{tX_i - X_{i+1}}{X_i-X_{i+1}}(1-s_i) \right) X^\mu \mathbf{1}.
\nonumber 
\end{align}
\end{proof}

\subsection{Constructing the nonsymmetric Macdonald polynomials $E_\mu$}

The Macdonald polynomials are the 
simultaneous eigenvectors for the action of $Y_1, \ldots, Y_n$
on the polynomial ring $\CC[X_1^{\pm1}, \ldots, X_n^{\pm1}]$.  Because the 
intertwiners $\tau^\vee_1, \ldots, \tau^\vee_{n-1}, \tau^\vee_\pi$ 
move past $Y_1, \ldots, Y_n$ in the best possible way, they are the
perfect tools for physically constructing the Macdonald polynomials $E_\mu$.

\begin{prop} \label{KSGLnformularepeat}
Let $\mu\in \ZZ_{\ge 0}^n$ and let $u_\mu$ and $v_\mu$ be as
in \eqref{umudefn} and let
$\ell(v_\mu)$ be the number of inversions of $v_\mu$. 
Choose a reduced word $\vec u_\mu = s_{i_1}\cdots s_{i_\ell}$ (where 
$i_1, \ldots, i_\ell\in \{\pi, 1, \ldots, n-1\}$ and let
$\tau_{u_\mu}^\vee = \tau_{i_1}^\vee \cdots \tau_{i_\ell}^\vee$. 
Define
$$E_\mu = t^{-\frac12\ell(v^{-1}_\mu)}\tau_{u_\mu}^\vee \mathbf{1}.
\qquad\hbox{Then}\quad
Y_i E_\mu = q^{-\mu_i}t^{-(v_\mu(i)-1)+\frac12(n-1)}E_\mu,$$
for $i\in \{1, \ldots, n\}$, and  the coefficient of $x^\mu$ in $E_\mu$ is 1.
\end{prop}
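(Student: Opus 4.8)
The plan is to treat the two assertions of the proposition separately. To begin, $\tau^\vee_{u_\mu} = \tau^\vee_{i_1}\cdots\tau^\vee_{i_\ell}$ is independent of the choice of reduced word $\vec u_\mu = s_{i_1}\cdots s_{i_\ell}$ because $\tau^\vee_1,\ldots,\tau^\vee_{n-1},\tau^\vee_\pi$ satisfy the braid relations of $W$ (Proposition \ref{intertwiners}, equation \eqref{taubraidrels}), so $E_\mu$ is well defined. For the eigenvalue statement I would push each $Y_i$ through $\tau^\vee_{u_\mu}$ using \eqref{Ypasttau} with $w = u_\mu$:
$$Y_i E_\mu = t^{-\frac12\ell(v^{-1}_\mu)}\,Y_i\,\tau^\vee_{u_\mu}\mathbf{1} = t^{-\frac12\ell(v^{-1}_\mu)}\,\tau^\vee_{u_\mu}\,Y_{u_\mu^{-1}(i)}\mathbf{1},$$
and then evaluate $Y_{u_\mu^{-1}(i)}\mathbf{1}$. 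By Proposition \ref{wmuexplicit}(c), $u_\mu^{-1}(i) = v_\mu(i) - n\mu_i$ with $v_\mu(i)\in\{1,\ldots,n\}$; combining the periodicity $Y_{k+n} = qY_k$ of \eqref{YiforiinZ} with the values $Y_k\mathbf{1} = t^{-(k-1)+\frac12(n-1)}\mathbf{1}$ of \eqref{Yion1} gives $Y_{u_\mu^{-1}(i)}\mathbf{1} = q^{-\mu_i}t^{-(v_\mu(i)-1)+\frac12(n-1)}\mathbf{1}$, and therefore $Y_i E_\mu = q^{-\mu_i}t^{-(v_\mu(i)-1)+\frac12(n-1)}E_\mu$. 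This part is a routine computation.

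For the claim that the coefficient of $x^\mu$ in $E_\mu$ is $1$, I would first show that the $E_\mu$ defined by $t^{-\frac12\ell(v^{-1}_\mu)}\tau^\vee_{u_\mu}\mathbf{1}$ satisfy the recursion \eqref{recursion} with $E_0 = \mathbf{1}$, and then read the leading coefficient off that recursion. The base case $\mu = 0$ is immediate. For the $\pi$-step, Proposition \ref{wmuexplicit}(f) gives $u_{\pi\nu} = \pi u_\nu$ with $\ell(u_{\pi\nu})=\ell(u_\nu)$, hence $\tau^\vee_{u_{\pi\nu}} = \tau^\vee_\pi\tau^\vee_{u_\nu} = g^\vee\tau^\vee_{u_\nu}$, and the normalizing exponent $-\frac12(\ell(v^{-1}_{\pi\nu}) - \ell(v^{-1}_\nu))$ is converted into $\#\{i:\nu_i>\nu_n\} - \frac12(n-1)$ using the length formula of Proposition \ref{wmuexplicit}(f) together with the description of $v_\nu(n)$ in Proposition \ref{wmuexplicit}(b); this reproduces the first line of \eqref{recursion}. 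For the $s_i$-step, Proposition \ref{wmuexplicit}(e) gives $u_{s_i\mu} = s_iu_\mu$ with $\ell(u_{s_i\mu}) = \ell(u_\mu)+1$, so $\tau^\vee_{u_{s_i\mu}} = \tau^\vee_i\tau^\vee_{u_\mu}$, and the accompanying change $\ell(v^{-1}_{s_i\mu}) = \ell(v^{-1}_\mu) - 1$ supplies the factor $t^{\frac12}$ of \eqref{recursion}. Since every $\mu\in\ZZ^n_{\ge 0}$ is reached from $0$ by a string of such $\pi$- and $s_i$-moves --- this is exactly the shape of the box-greedy reduced word of Proposition \ref{KSredwd} --- the $\tau^\vee$-formula solves \eqref{recursion}.

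It then remains to check that each recursive step preserves the property ``the coefficient of the leading monomial equals $1$'', with respect to the standard dominance-type partial order on $\ZZ^n$ (dominance on the decreasing rearrangement, refined by Bruhat order on the sorting permutation). Here I would use the explicit polynomial-representation operators of Proposition \ref{GLnpolyrepops}: $t^{\frac12}T_i$ is triangular on the monomial basis, sending $x^\rho$ to $x^{s_i\rho}$ plus strictly lower monomials when $\rho_i > \rho_{i+1}$, to $t\,x^{s_i\rho}$ plus strictly lower monomials when $\rho_i < \rho_{i+1}$, and to $t\,x^\rho$ when $\rho_i = \rho_{i+1}$ (compare \eqref{Tiondeg1}), while $g^\vee = x_1T_1\cdots T_{n-1}$ sends $x^\rho$ to $x^{\pi\rho}$ plus strictly lower terms after the normalization. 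Tracking leading terms through the induction, the normalizing powers of $t$ in \eqref{recursion} are precisely what cancels the surplus factors of $t$ introduced by the $T_i$, so the coefficient of $x^\mu$ in $E_\mu$ is $1$. I expect this last triangularity bookkeeping to be the main obstacle --- one must fix the partial order so that the leading monomial transforms as $\rho\mapsto\pi\rho$ and $\rho\mapsto s_i\rho$ under the two moves and verify the cancellation of the $t$-powers --- whereas the eigenvalue identity follows directly from the intertwiner relation \eqref{Ypasttau}.
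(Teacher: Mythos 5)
Your eigenvalue computation coincides with the paper's own proof: push $Y_i$ through $\tau^\vee_{u_\mu}$ via \eqref{Ypasttau}, use $u_\mu^{-1}(i)=v_\mu t_\mu^{-1}(i)=v_\mu(i)-n\mu_i$, then \eqref{YiforiinZ} and \eqref{Yion1}. For the statement that the coefficient of $x^\mu$ is $1$, however, you take a genuinely different route. The paper never passes through the recursion \eqref{recursion} or any triangularity in the polynomial representation: it works inside the DAHA, noting via \eqref{Xvdefn} and Proposition \ref{TtoXconversion} that the top term of the expansion of $\tau^\vee_{u_\mu}$ is $X^{u_\mu}=X^{t_\mu v_\mu^{-1}}=X^\mu T_{v_\mu^{-1}}$, and then $T_{v_\mu^{-1}}\mathbf{1}=t^{\frac12\ell(v_\mu^{-1})}\mathbf{1}$ cancels the normalizing power of $t$ in one line. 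Your route --- first show the $\tau^\vee$-definition satisfies \eqref{recursion} (this is exactly Proposition \ref{Emusymmetries}(a),(b), proved just as you indicate from Proposition \ref{wmuexplicit}(e),(f)), then induct on the recursion tracking leading monomials --- is the classical one and can be made to work, but it buys you nothing here and costs you a nontrivial lemma that the paper's argument sidesteps: you must fix the partial order and prove that $t^{\frac12}T_i$ and $g^\vee$ are triangular with respect to it \emph{on all monomials}, not just the leading one, since the lower-order terms of $E_\mu$ could a priori contribute to the coefficient of $x^{s_i\mu}$. Two concrete cautions if you carry this out. First, your triangularity claim in the case $\rho_i<\rho_{i+1}$ is wrong as stated: $t^{\frac12}T_ix^\rho=t\,x^{s_i\rho}+(t-1)x^\rho+\cdots$, and $x^\rho$ is \emph{higher} than $x^{s_i\rho}$ in the relevant order (compare $t^{\frac12}T_ix_{i+1}=tx_i+(t-1)x_{i+1}$ in \eqref{Tiondeg1} with the expansion of $E_{\varepsilon_{i+1}}$ in Proposition \ref{Emu1box}(a), which shows $\varepsilon_i\prec\varepsilon_{i+1}$). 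This does not sink the strategy, because the recursion only applies $\tau^\vee_i$ in the length-increasing direction, where the leading monomial falls in the case $\rho_i>\rho_{i+1}$; but it is precisely the statement you need for the non-leading monomials, so it must be formulated correctly (both $x^{s_i\nu}$ and $x^\nu$ must be shown $\prec s_i\mu$ whenever $\nu\prec\mu$). Second, in the $\pi$-step the leading coefficient of $g^\vee$ involves a cancellation of powers of $q$ as well as of $t$ (cf.\ the factor $q^{\mu_n}$ in Proposition \ref{Emusymmetries}(c)), which your bookkeeping should also record.
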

\begin{proof} 
Compute the eigenvalue as follows:
\begin{align*}
Y_i E_\mu
&= Y_i t^{-\frac12\ell(v^{-1}_\mu)}\tau_{u_\mu}^\vee \mathbf{1}
\qquad \hbox{(by definition of $E_\mu$)} \\
&= t^{-\frac12\ell(v^{-1}_\mu)} \tau_{u_\mu}^\vee Y_{u_\mu^{-1}(i)} \mathbf{1}  
\qquad \hbox{(by \eqref{Ypasttau})}  \\
&= t^{-\frac12\ell(v^{-1}_\mu)} \tau_{u_\mu}^\vee Y_{v_\mu t_\mu^{-1}(i)} \mathbf{1}  
\qquad \hbox{(by \eqref{umudefn})}  \\
&= t^{-\frac12\ell(v^{-1}_\mu)} \tau_{u_\mu}^\vee Y_{v_\mu (i-n\mu_i)} \mathbf{1} 
\qquad \hbox{(by \eqref{tmudefn})} \\
&= t^{-\frac12\ell(v^{-1}_\mu)} \tau_{u_\mu}^\vee Y_{v_\mu (i)-n\mu_i} \mathbf{1} 
\qquad \hbox{(by \eqref{nperiodicdefn})} \\
&= t^{-\frac12\ell(v^{-1}_\mu)} \tau_{u_\mu}^\vee q^{-\mu_i} Y_{v_\mu (i)} \mathbf{1} 
\qquad \hbox{(by \eqref{YiforiinZ})} \\
&= q^{-\mu_i}t^{-(v_\mu(i)-1)+\frac12(n-1)} (t^{-\frac12\ell(v^{-1}_\mu)}\tau_{u_\mu}^\vee  \mathbf{1})
\qquad \hbox{(by \eqref{Yion1})} \\
&= q^{-\mu_i}t^{-(v_\mu(i)-1)+\frac12(n-1)}E_\mu
\qquad \hbox{(by definition of $E_\mu$).}
\end{align*}
Using \eqref{Xvdefn}
and \eqref{TtoXconversion},
the top term of the expansion of in  $t^{-\frac12\ell(v_\mu)}\tau^\vee_{u_\mu} \mathbf{1}$ 
is
$$t^{-\frac12\ell(v^{-1}_\mu)}X^{u_\mu}\mathbf{1} 
= t^{-\frac12\ell(v^{-1}_\mu)}X^{t_\mu v^{-1}_\mu} \mathbf{1}
= t^{-\frac12\ell(v^{-1}_\mu)}X^\mu T_{v^{-1}_\mu} \mathbf{1}
= t^{-\frac12\ell(v^{-1}_\mu)}X^\mu t^{\frac12\ell(v^{-1}_\mu)} \mathbf{1}
=X^\mu\mathbf{1} = x^\mu.
$$
\end{proof}

\subsection{Steps and symmetries of $E_\mu$}

The following Proposition establishes the inductive construction of the $E_\mu$
and the symmetries in \eqref{Emusymmremark}.
For examples of the $E_\mu$ see Proposition \ref{Emu1box}, which provides explicit formulas 
for the cases when $\mu$ has 1 or 2 boxes.

\begin{prop} \label{Emusymmetries}
Let $\mu = (\mu_1, \ldots, \mu_n)\in \ZZ^n$.
\item[(a)] If $i\in \{1, \ldots, n-1\}$ and $\mu_i>\mu_{i+1}$ then 
$E_{s_i\mu} = t^{\frac12}\tau_i^\vee E_\mu$.
\item[(b)] $E_{(\mu_n+1, \mu_1, \ldots, \mu_{n-1})}
=  t^{\#\{ i\in \{1, \ldots, n\}\ |\  \mu_i>\mu_n\} -\frac12(n-1)} \tau_\pi^\vee E_{(\mu_1, \ldots, \mu_n)}$.
\item[(c)] $E_{(\mu_n+1, \mu_1, \ldots, \mu_{n-1})}
=q^{\mu_n}x_1E_\mu(x_2, \ldots, x_n, q^{-1}x_1)$.
\item[(d)] $E_{(\mu_1+1, \ldots, \mu_n+1)} = x_1\cdots x_n E_{(\mu_1, \ldots, \mu_n)}$.
\item[(e)] $E_{(-\mu_n, \ldots, -\mu_1)}(x_1, \ldots, x_n;q,t) 
= E_\mu(x^{-1}_n, \ldots, x^{-1}_1;q,t)$.
\end{prop}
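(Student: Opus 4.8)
The plan is to prove the five statements of Proposition \ref{Emusymmetries} in the order (a), (b), (c), (d), (e), since each relies on the previous ones together with the DAHA machinery already developed. For part (a), I would start from the defining recursion \eqref{recursion}: if $\mu_i < \mu_{i+1}$ then $E_{s_i\mu} = t^{\frac12}\tau_i^\vee E_\mu$. I want the reverse direction, so suppose $\mu_i > \mu_{i+1}$ and set $\nu = s_i\mu$, so $\nu_i < \nu_{i+1}$; then $E_\mu = E_{s_i\nu} = t^{\frac12}\tau_i^\vee E_\nu$. Applying $t^{\frac12}\tau_i^\vee$ again and using the quadratic relation \eqref{tausquared} for $(t^{\frac12}\tau_i^\vee)^2$ evaluated on the $Y$-eigenvalue of $E_\nu$ (which is recorded in Theorem \ref{Macpolyconst} / \eqref{CXlambdaaction}), one sees $t^{\frac12}\tau_i^\vee E_\mu = (t^{\frac12}\tau_i^\vee)^2 E_\nu = D_\nu E_\nu$, so $E_{s_i\mu} = E_\nu$ up to the scalar $D_\nu$. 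The point is to check $D_\nu = 1$ is \emph{not} what happens; rather, one shows directly that $t^{\frac12}\tau_i^\vee E_\mu$ is a $Y$-eigenvector with the eigenvalue of $E_{s_i\mu}$ and has the correct leading coefficient, hence equals $E_{s_i\mu}$ by the uniqueness in Theorem \ref{Macpolyconst}. The leading-term check uses $\tau_i^\vee = T_i + (\text{lower order in }Y)$ and the action of $T_i$ on monomials in \eqref{Tiondeg1}.

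For part (b), I would use Theorem \ref{Macpolyconst} combined with \eqref{Ypasttau}: the element $\tau_\pi^\vee E_{(\mu_1,\ldots,\mu_n)}$ is a simultaneous $Y$-eigenvector because $Y_i \tau_\pi^\vee = \tau_\pi^\vee Y_{\pi^{-1}(i)}$, and $\pi^{-1}$ cyclically shifts the eigenvalue tuple; the resulting eigenvalue matches that of $E_{(\mu_n+1,\mu_1,\ldots,\mu_{n-1})}$ after accounting for the relation $Y_{j+n} = qY_j$ from \eqref{YiforiinZ}, which produces the extra factor of $q^{\mu_n}$ and shifts $\mu_n \mapsto \mu_n+1$. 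The only remaining task is to pin down the scalar: compute the coefficient of $x^{(\mu_n+1,\mu_1,\ldots,\mu_{n-1})}$ in $\tau_\pi^\vee E_\mu = g^\vee E_\mu = x_1 T_1\cdots T_{n-1} E_\mu$, using \eqref{Tiondeg1} (or its generalization) to track how the top monomial $x^\mu$ transforms, and match it against the power of $t$ in the statement. This is exactly the normalization computation explained in the Remark after \eqref{recursion}: $-\frac12(\ell(v_{\pi\mu})-\ell(v_\mu)) = \frac12(n-1) - \#\{i : \mu_i \le \mu_n\}$, which by Proposition \ref{wmuexplicit}(f) equals $\#\{i : \mu_i > \mu_n\} - \frac12(n-1)$.

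For part (c), I would translate the operator identity in (b) into explicit variable substitutions using Proposition \ref{GLnpolyrepops}: $g^\vee = X_1 T_1\cdots T_{n-1}$ acts as $x_1 T_1 \cdots T_{n-1}$, and a short computation shows $T_1\cdots T_{n-1}$ followed by the appropriate normalization implements $h(x_1,\ldots,x_n)\mapsto h(x_2,\ldots,x_n,q^{-1}x_1)$ on the relevant span — this is essentially the content of $g = s_1\cdots s_{n-1}T_{q^{-1},X_n}$ dualized. Combining with the scalar from (b) and simplifying the power of $t$ against $\#\{i:\mu_i>\mu_n\}$ yields the clean statement $E_{(\mu_n+1,\mu_1,\ldots,\mu_{n-1})} = q^{\mu_n}x_1 E_\mu(x_2,\ldots,x_n,q^{-1}x_1)$. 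Part (d) follows by iterating (c) $n$ times: applying (c) with the roles cycled shows $E_{(\mu_1+1,\ldots,\mu_n+1)}$ is obtained from $E_{(\mu_1,\ldots,\mu_n)}$ by $n$ successive $\pi$-shifts, and the $q$-factors telescope to $1$ while the variable substitutions compose to the identity (each $q^{-1}$ introduced is cancelled), leaving the factor $x_1\cdots x_n$. Alternatively, and more cleanly, (d) follows from (a) and the $Y$-eigenvalue characterization: multiplication by $x_1\cdots x_n = (g^\vee)^n/(\text{unit})$ shifts all eigenvalues by the $q^{-1}$ coming from $Y^K$, matching $\mu \mapsto \mu + (1,\ldots,1)$, and the leading term is visibly correct.

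For part (e), which I expect to be the main obstacle, I would introduce the algebra anti-involution or the duality $\eta$ of Theorem \ref{dualitythm} — the one with $\eta(X^{\varepsilon_i}) = X^{-\varepsilon_{n-i+1}}$ and $\eta(Y^{\varepsilon_i^\vee}) = Y^{-\varepsilon_{n-i+1}^\vee}$, passed to $\tilde H_{GL_n}$ via \eqref{Heckedualities}. The idea is that $\eta$ intertwines the polynomial representation with a "flipped and inverted" version of itself: the map on polynomials $h(x_1,\ldots,x_n)\mapsto h(x_n^{-1},\ldots,x_1^{-1})$ should conjugate the $Y_i$-action into the $\eta(Y_i)$-action, hence send simultaneous $Y$-eigenvectors to simultaneous $Y$-eigenvectors with eigenvalues governed by $-\mu$ read backwards. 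Concretely, one checks that if $E_\mu$ has $Y_i$-eigenvalue $q^{-\mu_i}t^{-(v_\mu(i)-1)+\frac12(n-1)}$ then $E_\mu(x_n^{-1},\ldots,x_1^{-1})$ has $Y_i$-eigenvalue equal to that of $E_{(-\mu_n,\ldots,-\mu_1)}$ — this requires the identity $v_{(-\mu_n,\ldots,-\mu_1)}(i) = n+1 - v_\mu(n+1-i)$ relating the sorting permutations, which should follow from Proposition \ref{wmuexplicit}(b) by a direct count. The leading-coefficient normalization is automatic since inverting and reversing the monomial $x^\mu$ gives $x^{(-\mu_n,\ldots,-\mu_1)}$ with coefficient $1$. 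The delicate point is verifying that the flip-and-invert map genuinely implements $\eta$ on the polynomial representation rather than some twist of it; I would handle this by checking it on the generators $T_i$ using the explicit formula in Proposition \ref{GLnpolyrepops} (noting $T_{n-i}$ in the flipped variables matches $\eta(T_i)$) and on $g$ via $g = s_1\cdots s_{n-1}T_{q^{-1},X_n}$, then invoke uniqueness of the $E_\mu$ in Theorem \ref{Macpolyconst} to conclude.
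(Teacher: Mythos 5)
Your overall strategy is sound but it is genuinely different from the paper's. The paper proves (a) and (b) in one line each directly from the construction $E_\mu = t^{-\frac12\ell(v_\mu^{-1})}\tau^\vee_{u_\mu}\mathbf{1}$ of Proposition \ref{KSGLnformularepeat}: since $u_{s_i\mu}=s_iu_\mu$ and $u_{\pi\mu}=\pi u_\mu$ with lengths adding (Proposition \ref{wmuexplicit}(e),(f)), one has $\tau^\vee_{u_{s_i\mu}}=\tau^\vee_i\tau^\vee_{u_\mu}$ and $\tau^\vee_{u_{\pi\mu}}=\tau^\vee_\pi\tau^\vee_{u_\mu}$, and the powers of $t$ in the statement are exactly $t^{-\frac12(\ell(v^{-1}_{s_i\mu})-\ell(v^{-1}_\mu))}$, resp.\ $t^{-\frac12(\ell(v^{-1}_{\pi\mu})-\ell(v^{-1}_\mu))}$, read off from Proposition \ref{wmuexplicit}. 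No eigenvalue or uniqueness argument is needed. Your route — show $\tau^\vee_i E_\mu$ and $\tau^\vee_\pi E_\mu$ are $Y$-eigenvectors with the eigenvalue of the target via \eqref{Ypasttau}, then normalize the leading coefficient and invoke uniqueness from Theorem \ref{Macpolyconst} — also works and is the more standard "intertwiner" argument; it buys independence from the reduced-word bookkeeping at the cost of a separate leading-term computation. For (d) and (e) your arguments essentially coincide with the paper's ((d) via $(\tau^\vee_\pi)^n=(g^\vee)^n=X_1\cdots X_n$, (e) via the involution $\eta$ and eigenvalue matching with $v_\mu=w_0v_{-w_0\mu}w_0$).

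Two soft spots. First, in (a) your opening detour is circular and self-contradictory: \eqref{recursion} is the Section~3 \emph{definition} whose consistency is exactly what Proposition \ref{Emusymmetries}(a),(b) establish, so you cannot use it as input; moreover, taking the printed inequality $\mu_{i+1}>\mu_i$ in \eqref{recursion} at face value together with (a) forces $(t^{\frac12}\tau^\vee_i)^2=1$ on the relevant eigenspace, which is false — the inequality in \eqref{recursion} is reversed relative to \eqref{CXlambdaaction} and to Proposition \ref{Emu1box}, and the intended condition is $\mu_i>\mu_{i+1}$ throughout. Your fallback (eigenvector plus uniqueness) rescues the proof, but you should discard the detour rather than leave the contradiction unresolved. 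Second, in (c) the assertion that ``$T_1\cdots T_{n-1}$ followed by the appropriate normalization implements $h\mapsto h(x_2,\ldots,x_n,q^{-1}x_1)$'' is not correct as an operator identity; $T_1\cdots T_{n-1}$ is a Demazure--Lusztig-type operator, not a substitution. The step that actually converts $\tau^\vee_\pi=g^\vee$ into the substitution operator is the group relation $g^\vee Y_n = X_1 g$ (from \eqref{ggveecommutationDAArtrel} and \eqref{TY2GL}), applied to the $Y_n$-eigenvector $E_\mu$ so that $t^{-(v_\mu(n)-1)+\frac12(n-1)}g^\vee E_\mu = q^{\mu_n}g^\vee Y_nE_\mu = q^{\mu_n}x_1\,gE_\mu$, after which Proposition \ref{GLnpolyrepops} gives the substitution. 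Without that identity your sketch of (c) does not close.
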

\begin{proof}
(a) Let $\mu = (\mu_1, \ldots, \mu_n)$ and let $i$ be such that $\mu_i>\mu_{i+1}$. \hfil\break
By Proposition \ref{wmuexplicit}(e), $\ell(v_{s_i\mu}) - \ell(v_\mu) = -1$, giving
$$E_{s_i\mu} 
= t^{-\frac12\ell(v^{-1}_{s_i\mu})} \tau^\vee_{u_{s_i\mu}} \mathbf{1}
= t^{-\frac12\ell(v^{-1}_{s_i\mu})} \tau^\vee_i \tau^\vee_{u_\mu} \mathbf{1}
= t^{-\frac12(\ell(v^{-1}_{s_i\mu})-\ell(v^{-1}_{\mu}))} \tau^\vee_i E_\mu
=  (t^{\frac12}\tau_i^\vee)E_\mu.$$
(b) The left hand side is $E_{\pi \mu}$ and
\begin{align*}
E_{\pi\mu} 
&= t^{-\frac12\ell(v^{-1}_{\pi\mu})} \tau^\vee_{u_{\pi\mu}} \mathbf{1}
= t^{-\frac12\ell(v^{-1}_{\pi\mu})} \tau^\vee_\pi \tau^\vee_{u_\mu} \mathbf{1}
= t^{-\frac12(\ell(v^{-1}_{\pi\mu})-\ell(v^{-1}_\mu))} \tau^\vee_\pi 
t^{-\frac12\ell(v^{-1}_\mu)}\tau^\vee_{u_\mu} \mathbf{1}
= t^{ -(v_\mu(n)-1) + \frac12(n-1)} \tau_\pi^\vee E_\mu.
\end{align*}
The result then follows from Proposition \ref{wmuexplicit}(b).

\smallskip\noindent
(c)  
The second relation in  \eqref{ggveecommutationDAArtrel}
and the second relation in \eqref{TY2GL} give $X_1g = g^\vee Y_n$.
Beginning with the right hand side of (b) and using $g^\vee Y_n = X_1g$ gives
\begin{align*}
&t^{ -(v_\mu(n)-1 + \frac12(n-1)} \tau_\pi^\vee E_\mu
= t^{ -(v_\mu(n)-1) + \frac12(n-1)} g^\vee E_\mu
= g^\vee q^{\mu_n} Y_n E_\mu \\
&\qquad = q^{\mu_n} x_1gE_\mu
= q^{\mu_n} x_1 s_1\cdots s_{n-1}T_{q^{-1}, x_n}E_\mu
=q^{\mu_n}x_1E_\mu(x_2, \ldots, x_n, q^{-1}x_1).
\end{align*}
(d) By \eqref{Xommutation}, $(\tau^\vee_\pi)^n = (g^\vee)^n = X_1\cdots X_n$ and so
$$E_{(\mu_1+1, \ldots, \mu_n+1)} = (\tau^\vee_\pi)^n E_{(\mu_1, \ldots, \mu_n)} 
=x_1\cdots x_n E_{(\mu_1, \ldots, \mu_n)}.
$$
(e) Let $\eta\colon \tilde H_{GL_n} \to \tilde H_{GL_n}$ be the involution in 
\eqref{Heckedualities}.  
Let $w_0$ be the longest element of
$S_n$ so that $w_0(i) = n-i+1$ for $i\in \{1, \ldots, n\}$.
Using the last relation in \eqref{dualitiesonXY},
\begin{align*}
Y_i \eta(E_\mu(X_1, \ldots, X_n)) \mathbf{1} 
&= \eta(Y^{-1}_{n-i+1} E_\mu(X_1,\dots, X_n))\mathbf{1}   \\
&= q^{\mu_{n-i+1}} t^{(v_\mu(n-i+1)-1)-\frac12(n-1)} \eta(E_\mu(X_1, \ldots, X_n))\mathbf{1} \\
&= q^{\mu_{n-i+1}} t^{(w_0v_{-w_0\mu} w_0 (n-i+1)-1)-\frac12(n-1)} \eta(E_\mu(X_1, \ldots, X_n))\mathbf{1}, \\
&= q^{-(-w_0\mu)_i } t^{(w_0v_{-w_0\mu} (i)-1)-\frac12(n-1)} \eta(E_\mu(X_1, \ldots, X_n))\mathbf{1}, \\
&= q^{-(-w_0\mu)_i } t^{(n-v_{-w_0\mu} (i)+1)-1-\frac12(n-1)} \eta(E_\mu(X_1, \ldots, X_n))\mathbf{1}, \\
&= q^{-(-w_0\mu)_i } t^{-(v_{-w_0\mu} (i)-1)+\frac12(n-1)} \eta(E_\mu(X_1, \ldots, X_n))\mathbf{1},
\end{align*}
so that $\eta(E_\mu(X_1, \ldots, X_n)) \mathbf{1} = E_\mu(x_n^{-1}, \ldots, x_1^{-1})$
satisfies the conditions (from Theorem \ref{KSGLnformularepeat}) determining $E_{-w_0\mu}(x_1, \ldots, x_n)$.
\end{proof}


\end{document}